\def\thetitle{{ . }}
\newtheorem{thm}{Theorem}[section]
\newtheorem{lem}[thm]{Lemma}
\newtheorem{cor}[thm]{Corollary}
\newtheorem{prop}[thm]{Proposition}
\newtheorem{que}{Question}
\newtheorem{defn}[thm]{Definition}
\newtheorem*{que*}{Question}
\theoremstyle{remark}
\newtheorem*{rmk}{\textbf{Remark}}
\theoremstyle{definition}
\newtheorem*{defn*}{Definition}
\newcommand\topnorm[1]{\overline{\langle\langle #1 \rangle\rangle}}
\newcommand\Homeo{\operatorname{Homeo}}
\newcommand{\Map}{\mathrm{Map}}
\newcommand{\FMap}{\mathrm{FMap}}
\newcommand{\PMap}{\mathrm{PMap}}
\title[Topological normal generators]{Topological normal generation of big mapping class groups}
\author{Juhun Baik}
\address{Juhun Baik, Department of Mathematical Sciences, KAIST,  
	291 Daehak-ro, Yuseong-gu, Daejeon 34141, South Korea }
\email{jhbaik@kaist.ac.kr}
\date{\today}
\begin{document}
	
	\begin{abstract}		
		A topological group $G$ is \emph{topologically normally generated} if there exists $g \in G$ such that the normal closure of $g$ is dense in $G$.
		Let $S$ be a tame, infinite type surface whose mapping class group $\Map(S)$ is generated by a coarsely bounded set (CB generated). 
		We prove that if the end space of $S$ is countable, then $\Map(S)$ is topologically normally generated if and only if $S$ is uniquely self-similar.
		Moreover, when the end space of $S$ is uncountable, we provide a sufficient condition under which $\Map(S)$ is topologically normally generated.
		As a consequence, we construct uncountably many examples of surfaces that are not telescoping yet have topologically normally generated mapping class groups. 
		Additionally, we establish the semidirect product structure of $\FMap(S)$, the subgroup of $\Map(S)$ that pointwisely fixes all maximal ends that each is isolated in the set of maximal ends of $S$.
		This result leads to a proof that the minimum number of topological normal generators of $\Map(S)$ is bounded both above and below by constants that depend only on the topology of $S$.
		Furthermore, we demonstrate that the upper bound grows quadratically with respect to this constant.
	\end{abstract}
	
	\keywords{big mapping class group, infinite type surface, topological group, normal generation, coarsely bounded.}
	\maketitle
	
\section{Introduction}\label{sec:1.intro}

Let $S$ be an orientable surface.
We say that $S$ is of finite type if its fundamental group is finitely generated. 
Otherwise, we will call $S$ is called a surface of infinite type.
Equivalently, $S$ has infinite type if it has either infinitely many punctures, infinitely many genus, or both.
A mapping class group of an infinite type surface $S$ is usually called a \emph{big mapping class group}.
Throughout this article we denote the big mapping class group of $S$ by $\Map(S)$.
For a more detailed study of the big mapping class groups, we refer to the reader to \cite{aramayona2020bigmcgoverview}.

A big mapping class group is a topological group equipped with the topology induced by the compact-open topology on the group of orientation-preserving homeomorphisms of $S$. 
With this topology, a big mapping class group is Polish.
The key distinction between mapping class groups of finite-type surfaces and big mapping class groups is that the latter are neither countably generated nor locally compact.
Thanks to the influential work of Mann and Rafi \cite{mann2023large}, big mapping class groups can now be studied from the perspective of large-scale geometry through the concept of \emph{coarsely boundedness (abbreviated CB)}, introduced by Rosendal \cite{rosendal2022coarse}.

\subsection{Topological normal generation}
A topological group is \emph{topologically normally generated} if it has an element whose normal closure is dense.
If a topological group has the \emph{Rokhlin property}, meaning it contains a dense conjugacy class, it is automatically topologically normally generated.
For finite type surfaces, it is well known that almost every mapping class group is normally generated. 
(See \cite{lanier2022normal}, \cite{baik2021reducible})
By definition, if a group is normally generated, then it is topologically normally generated. 
However, the converse does not always hold.
For instance, Domat \cite{domat2022pure} proved that there is a surjection from the mapping class group of the Loch Ness monster surface to the additive group of rational numbers, yet this surface is uniquely self-similar.
Lanier and Vlamis \cite{lanier2022Rokhlin} proved that if the surface $S$ is uniquely self-similar, then $S$ has the Rokhlin property, and hence $\Map(S)$ is topologically normally generated.
We show that the converse also holds when the surface $S$ has a countable end space and $\Map(S)$ is generated by a coarsely bounded set (CB generated).

\begin{thm}{(Theorem \ref{thm:Map_gen})}\label{thm:thmalpha_countable}
	Suppose $S$ is a tame, infinite type surface with countable end space, and $\Map(S)$ is CB generated.
	Then $S$ is uniquely self-similar if and only if $\Map(S)$ is topologically normally generated.
\end{thm}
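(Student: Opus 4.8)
The forward direction is essentially available from the literature: if $S$ is uniquely self-similar then, by Lanier--Vlamis, $\Map(S)$ has the Rokhlin property, hence contains a dense conjugacy class, and so is topologically normally generated. The plan is therefore to concentrate on the converse, which I would prove by contraposition: assuming $S$ is tame, of infinite type, with countable end space, with $\Map(S)$ CB generated, and \emph{not} uniquely self-similar, I will show that $\Map(S)$ is not topologically normally generated. The first reduction is to pin down the hypothesis in this restricted setting. Since $\Map(S)$ is CB generated, the Mann--Rafi structure theory constrains the set of maximal ends $M(S)$, and because the end space is countable (so $M(S)$ contains no Cantor set) this forces $M(S)$ to be finite. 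I would then check that, for a countable end space, a unique maximal end already forces self-similarity, so that ``uniquely self-similar'' is equivalent to $\abs{M(S)}=1$. The hypothesis thus becomes $\abs{M(S)}=k\ge 2$, and the goal is to obstruct topological normal generation in exactly this case.

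The conceptual heart of the argument is that topological normal generation passes to \emph{continuous} Hausdorff quotients: if $g$ topologically normally generates $\Map(S)$, so that $\topnorm{g}=\Map(S)$, and $\phi\co\Map(S)\to Q$ is a continuous surjection, then $\phi(g)$ topologically normally generates $Q$. This is precisely the point that separates the present question from Domat's surjections onto $\Q$: those homomorphisms are \emph{discontinuous}, so they do not obstruct topological normal generation, whereas a continuous quotient onto a group that cannot be topologically generated by a single conjugacy class does. Accordingly I would study the continuous abelianization $\Map(S)/\overline{[\Map(S),\Map(S)]}$ and show that it fails to be topologically monothetic once $k\ge 2$; this also matches the paper's mechanism for the lower bound on the number of topological normal generators.

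To carry this out I would use the semidirect product decomposition $\Map(S)\cong \FMap(S)\rtimes Q$, where $Q\cong\prod_j\mathrm{Sym}(k_j)$ is the finite group permuting the $k$ isolated maximal ends within their homeomorphism types, to reduce the computation of the continuous abelianization to the $Q$-coinvariants of the continuous abelianization of $\FMap(S)$ together with $Q^{\mathrm{ab}}$. There are two regimes. If the maximal ends are not all of the same type, then invariants localized near distinct types are individually $\Map(S)$-invariant, and I would exhibit two independent continuous homomorphisms (to $\Z$ or to a finite cyclic group) surviving in the abelianization, producing a continuous surjection onto a non-monothetic group such as $(\Z/2)^2$ or $\Z^2$. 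If instead all $k\ge 2$ maximal ends share a single type, then $Q$ permutes them transitively and already contributes a continuous $\Z/2$ via the sign of the induced permutation; I would then produce one further $Q$-invariant continuous invariant that survives in the coinvariants of $\FMap(S)^{\mathrm{ab}}$ -- built from Dehn twists about curves separating the maximal ends together with handle shifts or end rotations localized near them -- arranged so that the continuous abelianization surjects onto a non-monothetic group such as $\Z\times\Z/2$ or $(\Z/2)^2$. In either regime the continuous abelianization is not topologically monothetic, so no continuous surjection from it onto a monothetic group can be an isomorphism, and hence $\Map(S)$ is not topologically normally generated.

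The main obstacle is this last step: constructing the independent continuous homomorphisms and simultaneously verifying three things. First, that each is genuinely continuous for the Polish topology -- the delicate point in view of the Domat phenomenon, and where the finiteness of $M(S)$ and the CB-generation hypothesis are essential, since they let me realize each invariant by data supported in a compact region separating one maximal end from the rest. Second, that each descends to and \emph{survives} in the continuous abelianization rather than dying inside $\overline{[\Map(S),\Map(S)]}$. Third, that invariants attached to distinct maximal ends remain independent after passing to $Q$-coinvariants, so that the image of the abelianization really does require at least two topological generators. I expect the semidirect product structure to handle most of the bookkeeping by localizing each invariant near a single maximal end; the subtle verification, and the crux of the proof, is that these localized invariants remain nontrivial and jointly independent in the abelianization of the full group $\Map(S)$.
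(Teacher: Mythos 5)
Your overall skeleton matches the paper's: Lanier--Vlamis Rokhlin property for the forward direction, and for the converse a continuous surjection onto a non-cyclic abelian group built from the sign of the permutation of maximal ends together with flux-type homomorphisms counting shifted immediate predecessors (the paper's $\Phi_z$ and $\Theta_z$, Sections 4--5). But there is a genuine gap at exactly the step you defer to ``bookkeeping'': you assume a splitting $\Map(S)\cong \FMap(S)\rtimes Q$ and reduce the continuous abelianization to $(\FMap(S)^{\mathrm{ab}})_Q\times Q^{\mathrm{ab}}$. The paper never claims such a splitting, and it is not available for free: the natural lifts of transpositions are half twists $g_{i,j}$, whose squares are nontrivial Dehn twists, so $1\to\FMap(S)\to\Map(S)\to Q\to 1$ has no obvious section. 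Without a splitting, the coinvariants computation does not control $H_1$: the five-term exact sequence only gives $(\FMap(S)^{\mathrm{ab}})_Q\to \Map(S)^{\mathrm{ab}}\to Q^{\mathrm{ab}}\to 0$ with the first map possibly non-injective, so an invariant that ``survives in coinvariants'' can still die in $\Map(S)^{\mathrm{ab}}$, and conversely the extension could make the abelianization cyclic (compare $1\to\Z/2\to\Z/4\to\Z/2\to 1$, where the non-split extension has cyclic abelianization even though the coinvariants-times-$Q^{\mathrm{ab}}$ formula predicts $(\Z/2)^2$). The paper's own Remark after Proposition \ref{prop:parity_map} (the Jacob's ladder involutions $\sigma,\tau$ with $\sigma\tau$ a handle shift) is precisely a demonstration that the naive ``flux plus sign'' assignment fails to be a homomorphism. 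The paper's substitute for your splitting is to designate coset representatives in advance (finite products of half twists supported in the sphere $L$, Lemma \ref{lem:parity_map}), define $\widetilde{\Theta}(f)=\bigl(\widetilde{\Theta_z}(fg),\mathrm{sgn}(g)\bigr)$, and verify well-definedness and the homomorphism property by hand (Proposition \ref{prop:parity_map}); that verification, not the semidirect product formalism, is the crux.

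A second, related omission: in the case where all $k\ge 2$ maximal ends have the same type, you leave the target as ``$\Z\times\Z/2$ or $(\Z/2)^2$.'' The first option is impossible. By the paper's Lemma \ref{lem:same_conjugacy_xx}, a shift $\eta_{x_i,x_j,z}$ between two same-type ends is conjugate (via a half twist) to its own inverse, and conjugation by half twists permutes and sign-flips the entries of $\Theta_z\in\Z^{n-1}$; hence the flux can survive in any abelian quotient of $\Map(S)$ only as its mod-$2$ entry sum, and the target must be $(\Z/2)^2$. Similarly, in your first regime (several types, one of them possibly repeated), two fluxes ``localized near distinct types'' need not be independent -- total flux out of one end equals minus the flux into the others -- which is why the paper there pairs a single carefully symmetrized flux ($\widehat{\Phi}_{z'}$, made compatible with the conjugacy $\eta_{x_i,y,z}\sim\eta_{x_j,y,z}$ of Lemma \ref{lem:same_conjugacy_xy}) with the parity $\mathcal{P}$, rather than using two fluxes. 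So the architecture of your proposal is right, but the descent of the invariants from $\FMap(S)$ to $\Map(S)$ -- the step your splitting assumption was meant to trivialize -- is where the actual proof lives, and as stated your reduction would not go through.
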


For the case that the end space of $S$ is uncountable, this has been studied by Vlamis \cite{vlamis2024telescoping}, who showed that the mapping class group of telescoping surfaces are normally generated by a strong dilatation map. 
In fact, he proved that the mapping class group of telescoping surfaces are \emph{strongly distorted}, though we will not cover this result in this paper.
We note that if $S$ is infinite type surface and telescoping, it cannot have a countable end space.
This was established by Mann and Rafi \cite{mann2023large}. 
Aside from telescoping surfaces, we provide another class of infinite type surfaces with uncountable ends whose mapping class group is topologically normally generated.
\begin{thm}{(Theorem \ref{thm:uncountable_case_ex})}\label{thm:thmalpha_uncountable}
	Suppose $S$ is a tame, infinite type surface and satisfies the following.
	\begin{enumerate}
		\item $E(S)$ is uncountable and $\Map(S)$ is CB generated.
		\item $S$ is a connected sum of $S_{p}$ and $S_{u}$, where $S_p$ is perfectly self-similar and $S_u$ is uniquely self-similar with the unique maximal end $x$, and $|Im(x)| \leq 1$. 
	\end{enumerate}
	Then $\Map(S)$ is topologically normally generated.
\end{thm}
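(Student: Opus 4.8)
The plan is to produce a single mapping class $g \in \Map(S)$ whose normal closure is dense, by combining the homogeneity of the perfectly self-similar piece $S_p$ with the Rokhlin property coming from the unique self-similarity of $S_u$. First I would analyze the maximal ends of $S$. The connected-sum hypothesis together with the condition $|Im(x)| \le 1$ should force the unique maximal end $x$ of $S_u$ to be isolated in the set of maximal ends of $S$, while the maximal ends contributed by $S_p$ form a perfect (self-similar) set with no isolated points. Since $x$ is then the only isolated maximal end, it is topologically distinguished, so every homeomorphism of $E(S)$ fixes $x$; consequently every mapping class fixes $x$ and $\FMap(S) = \Map(S)$. This identification lets me import the semidirect product structure of $\FMap(S)$ established earlier in the paper, writing $\Map(S) = \FMap(S) \cong N \rtimes H$, where $H$ is governed by the perfectly self-similar part $S_p$ and $N$ collects the complementary classes, in particular those supported toward $x$ and the pure mapping classes.

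With this decomposition in hand, density of $\overline{\langle\langle g\rangle\rangle}$ reduces to two tasks: showing that the normal closure projects densely onto $H$, and showing that it contains a dense subset of $N$; an elementary argument for the product topology then gives $\overline{\langle\langle g\rangle\rangle} = N \rtimes H$. For the projection onto $H$, perfect self-similarity of $S_p$ supplies $H$ with a dense conjugacy class (the Rokhlin property), so I would choose $g$ whose image in $H$ is a topological normal generator, realized by a self-similar homeomorphism of the perfect part. For the $N$-part, I would take $g$ to also carry a strong dilatation supported near $x$, so that applying the Lanier--Vlamis argument to the uniquely self-similar factor $S_u$ forces $\overline{\langle\langle g\rangle\rangle}$ to capture all of $\Map(S_u)$, and hence the $x$-supported contribution to $N$.

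The remaining generation of $N$ I would obtain dynamically. Conjugating $g$ by homeomorphisms that realize the self-similarity of $S_p$ produces displaced copies of $g$, and the commutators of $g$ with these displaced copies are supported on compact subsurfaces. Using the self-similar flow to sweep these supports across $S_p$, I expect these commutators to generate the compactly-supported Dehn twists densely, and therefore, after passing to the closure, all of the pure part $\PMap$ sitting inside $N$. Assembling the three contributions—the perfect factor $H$, the $x$-supported classes from the Lanier--Vlamis step, and the pure part from the commutator construction—and taking the closure then yields all of $\Map(S)$.

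The main obstacle is this last dynamical step: in the semidirect product $N \rtimes H$, conjugates of $g = (n,h)$ mix the two coordinates, so I must show the $H$-action on $N$ is rich enough—via the self-similar displacement of $S_p$—to sweep out a dense subset of $N$ through commutators, without trivializing the $H$-component needed to project densely onto $H$. The delicate balance is selecting $g$ so that its $H$-part remains a topological normal generator of $H$ while its interaction with the self-similar dynamics and the dilatation near $x$ generates $N$. Here the hypothesis $|Im(x)| \le 1$ is precisely what keeps the isolated end $x$ from being displaced by, or interfering with, the mixing on the perfect part, ensuring that the two factors of the semidirect product can be generated independently.
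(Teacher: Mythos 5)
There is a genuine gap, and it sits at the load-bearing step of your argument. The decomposition $\Map(S)=\FMap(S)\cong N\rtimes H$ with $H$ ``governed by the perfectly self-similar part'' is not what the paper's structure theorem provides. Theorem \ref{thm:semi_directprod_of_FMap} splits $\FMap(S)$ as $\mathcal{F}\rtimes(\text{free abelian group generated by the shift classes } \eta_{A,B,z} \text{ and } h_{A,B})$: the quotient is an abelian group of flux-counting classes, not anything resembling $\Map(S_p)$, and the $\Phi$-machinery behind it (Sections \ref{sec:shifts}--\ref{sec:cohomology}) is built under the standing assumption of a countable end space, where the counts $|f(\gamma)_L\cap\gamma_R|$ are finite because $f$ fixes \emph{all} maximal ends. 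For the surface at hand, $\mathcal{M}(S)$ contains a Cantor set of maximal ends which mapping classes permute, so those counts need not be finite and no such splitting is available; indeed, if a splitting with noncyclic abelian quotient did survive, Lemma \ref{lem:vlamis_obs} would contradict the very theorem you are proving. (Your opening observation is fine: $x$ is the unique isolated maximal end, so it is fixed by every mapping class and $\FMap(S)=\Map(S)$; but this does not yield the product structure you then lean on.) The second gap is the one you yourself flag as ``the main obstacle'': recovering each factor of $g$ separately inside $\overline{\langle\langle g\rangle\rangle}$. You do not resolve it, and it is exactly where the real work lies.

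For comparison, the paper's proof needs no semidirect product. It takes $g=\eta_z\varphi$, where $\eta_z$ is the generalized shift from $x$ toward the perfect part and $\varphi$ is an $H$-translation of a half-space in $S_p$, chosen so that $\eta_z$ and $\varphi$ commute and the translated half-space is disjoint from the support of $\eta_z$. The half-space swindle (Lemma \ref{lem:lemma3.7_malestein_tao}) then applies directly to $\eta_z\varphi$: every element of $\Homeo(H,\partial H)$ is a commutator with $\eta_z\varphi$, so $\varphi\in\langle\langle \eta_z\varphi\rangle\rangle$ and hence also $\eta_z$ (Lemma \ref{lem:uncountable_section,etavar}). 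This commuting/disjoint-support choice is the mechanism your proposal is missing. With the two pieces separated, the paper replaces your dynamical ``sweeping'' step by a finite checklist: Mann--Rafi's explicit CB generating set (Proposition \ref{prop:prop5.4_mannrafi}) consists of $\mathcal{V}_K$, $\Map(K)$, shifts, handle shifts, and half twists, and each family is shown to lie in $\topnorm{\eta_z\varphi}$ using half-space displacement (Lemma \ref{lem:half-space_displacable}), lantern relations (Lemma \ref{lem:uncountable_section,Map(K)}), and the Lanier--Vlamis dense subgroup of finitely bounded classes for the uniquely self-similar piece (Lemma \ref{lem:uncountable_Map(Su)}). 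Without an analogue of that choice of $g$ and a concrete generating set to test against, your outline does not close.
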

Here, $E(S)$ is a space of ends of $S$ and $Im(x)$ is the set of immediate predecessors of the maximal end $x$.
For precise definitions, see Section \ref{sec:mann_rafi} .
Note that there are uncountably many examples satisfying the conditions in Theorem \ref{thm:uncountable_case_ex}.

\subsection{First cohomology classes $H^1(\FMap(S), \mathbb{Z})$}

To prove the topological normal generation of big mapping class group, we use the following obstruction.
\begin{lem}{(Lemma \ref{lem:vlamis_obs})}
	Let $G$ be a topological group and $H$ be a non-cyclic abelian group.
	If there exists a continuous surjective homomorphism from $G$ to $H$, then $G$ cannot be topologically normally generated.
\end{lem}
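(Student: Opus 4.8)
The plan is a direct argument by contradiction that exploits the fact that any abelian quotient collapses conjugation. Suppose $\phi \co G \to H$ is a continuous surjective homomorphism onto a non-cyclic abelian group $H$, and suppose toward a contradiction that $G$ is topologically normally generated, say by $g \in G$. Write $N = \langle\langle g \rangle\rangle$ for the normal closure of $g$, so that by hypothesis $\overline{N} = G$.

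First I would observe that every conjugate of $g$ has the same image in $H$: because $H$ is abelian, $\phi(h g h^{-1}) = \phi(h)\phi(g)\phi(h)^{-1} = \phi(g)$ for all $h \in G$. Since $N$ is generated as a group by the conjugates $\{h g h^{-1} : h \in G\}$, it follows immediately that $\phi(N) \subseteq \langle \phi(g)\rangle$, the cyclic subgroup of $H$ generated by $\phi(g)$. Next I would transport this containment across the closure using continuity of $\phi$: one has $\phi(\overline{N}) \subseteq \overline{\phi(N)} \subseteq \overline{\langle \phi(g)\rangle}$, and since $\overline{N} = G$ and $\phi$ is surjective this yields $H = \overline{\langle \phi(g)\rangle}$. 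As the target groups arising in our applications (such as $\mathbb{Q}$ or $H^1(\FMap(S),\mathbb{Z})$) are discrete, the closure is superfluous and we conclude $H = \langle \phi(g)\rangle$ is cyclic, contradicting the non-cyclicity of $H$.

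This argument is short and presents no serious technical obstacle; the only point requiring care is the interaction between the density of $N$ and the topology carried by $H$. I would therefore make explicit that what the proof genuinely needs is that $H$ be \emph{non-monothetic}, i.e.\ admit no dense cyclic subgroup, and record that for a discrete group this condition is exactly non-cyclicity, which is the form in which the hypothesis is used throughout the paper.
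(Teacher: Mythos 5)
Your proof is correct and takes exactly the approach the paper does: the paper's entire proof is the one-line observation that the image of a normal closure in an abelian group must be cyclic, which your argument simply spells out (conjugation-invariance of $\phi(g)$, then density plus continuity). Your closing remark that the hypothesis genuinely needed is that $H$ be non-monothetic --- which coincides with non-cyclic precisely when $H$ is discrete, as it is in all of the paper's applications --- is a careful refinement that the paper leaves implicit.
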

We will construct the continuous surjective homomorphism by calculating the first cohomology classes.
In \cite{aramayona2020firstcohomology}, the authors studied the first cohomology of $\PMap(S)$, the \emph{pure mapping class group}, and proved that $\PMap(S)$ is a semidirect product of $\overline{\Map_c(S)}$, the topological closure of compactly supported maps, with handle shifts.
Additionally, in \cite{hernandez2021integral}, the authors extended this study to non-orientable surfaces.

We obatin an analogous result to that in \cite{aramayona2020firstcohomology} by utilizing the generalized shifts.
As a variant of $\PMap(S)$, we investigate $\FMap(S)$, a subgroup collecting all mapping classes that fix the \emph{isolated maximal ends} pointwise.
Here, \emph{isolated} refers to maximal ends that are isolated within the set of maximal ends of $S$ but not in the entire end space $E(S)$, since every maximal end is accumulated by other ends, so cannot be isolated in the whole end space unless itself is an isolated puncture.

The maximality of an end is derived from the partial order on the end space introduced in \cite{mann2023large}. 
Mann and Rafi proved that under this partial order, the maximal element exists.
Choose an immediate predecessor $z$ which both accumulates to maximal ends $A$ and $B$. (For precise definition, we refer Section \ref{sec:mann_rafi}.)
We number each $z$ by integers in a sequence so that $\{z_n\}_{n\in \mathbb{Z}}$ accumulates to $A$ (or $B$) when $n$ goes to $\infty$ (or $-\infty$, respectively).
The generalized shift $\eta_{A, B, z}$ is a shift map which sends $z$ of index $n$ to $z$ with index $n+1$.
In simple terms, the \emph{cohomology class} we investigate \emph{counts how many $z$'s are shifted}.
Collecting all such \emph{countings} (including handle shifts), we have the following structure for $\FMap(S)$.

\begin{thm}{(Theorem \ref{thm:semi_directprod_of_FMap})}\label{thm:thmalpha_firstcohom}
	Suppose $S$ is a tame, infinite type surface.
	Let $G_0 := \{ x \in \mathcal{M}(S) ~|~ x \in E^G \text{ and } Im(x) \cap E^G(S) = \emptyset\}$.
	Then for any distinct $A, B \in \mathcal{M}(S)$, 
	\[
		\FMap(S) = \mathcal{F} \rtimes \left( \prod_{\substack{A, B \in \mathcal{M}(S) \\ \text{if } z \in E_{cp}(A, B)}} \langle \eta_{A, B, z} \rangle \oplus \prod_{\substack{A, B \in \mathcal{M}(S) \\ \text{if both } A, B \in G_0}} \langle h_{A, B} \rangle \right)
	\]
\end{thm}
We note that each $\langle \eta_{A, B, z} \rangle$ and $\langle h_{A, B} \rangle$ is $\mathbb{Z}$.
The kernel part $\mathcal{F}$ contains all \emph{finitely bounded mapping classes} (See Definition \ref{defn:finite_boundness}).

\subsection{Normal generating set of big mapping class groups}
There have been numerous studies on the (normal) generation of the big mapping class group, as finding a \emph{nice} generating set for a given group is crucial for understanding its structure and properties.
In \cite{aramayona2020firstcohomology}, the authors prove that the set of Dehn twists topologically generates $\Map(S)$ if and only if $S$ has at most one non-planar end.
Moreover, Patel and Vlamis \cite{patelvlamis2018algebraic} proved that if $S$ has at least $2$ ends accumulated by genus, then $\PMap(S)$ is topologically generated by Dehn twists and handle shifts. 
In \cite{calegari2022normal}, the authors show that for the finite type surface $S$ with a Cantor set $K$ embedded, $\Map(S-K)$ is normally generated by torsion normal generator of $\Map(S)$ unless $S$ has genus $2$ with $5k+4$ punctures for some $k\geq 0$.
In \cite{hernandez2022conjugacy}, the authors studied topological properties of conjucagy class in $\Map(S)$.
They showed that all conjugacy classes in $\Map(S)$ are meager and provided a condition for $\Map(S)$ to contain a dense conjugacy class.
Additionally, as mentioned earlier, Vlamis (\cite{vlamis2023homeomorphism}, \cite{vlamis2024telescoping}) proved that if $S$ is perfectly self-similar or telescoping, then $\Map(S)$ has a normal generator.
Also, Mann and Rafi \cite{mann2023large} give a necessary and sufficient condition for a tame surface $S$ to admit a coarsely bounded generating set of $\Map(S)$ and explicitly constructed the CB generating set.

When considering the topological normal generating set, we pose the same question: Which set of mapping classes topologically normally generates the whole mapping class group? How large should such set be?
By their nature, big mapping class groups are neither countably generated nor compactly generated.
However, for a tame infinite type surface $S$ with CB generated $\Map(S)$, we first show that the number of normal generators is bounded by a constant that depends only on the topology of $S$.

\begin{thm}{(Theorem \ref{thm:n_of_generator})}\label{thm:thmalpha_normgenset}
	Suppose $S$ is a tame, infinite type surface with CB generated $\Map(S)$.
	Let $M$ be the number of distinct types of maximal ends in $S$, \emph{i.e.,}
	\[
	M = \big\vert \mathcal{M}(S)/\sim \big\vert
	\]
	Also, let $C := \max_{A, B \in \mathcal{M}(S)} |E_{cp}(A, B)|$.
	Then $\Map(S)$ is topologically normally generated by $M(M+C-1)$ number of generators.
\end{thm}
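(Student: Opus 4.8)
The plan is to produce an explicit set $T \subseteq \Map(S)$ of cardinality $M(M+C-1)$ and to prove that its closed normal closure $\overline{\langle\langle T \rangle\rangle}$ is all of $\Map(S)$. The organizing principle is the short exact sequence underlying Theorem~\ref{thm:semi_directprod_of_FMap}, namely
\[
1 \longrightarrow \FMap(S) \longrightarrow \Map(S) \longrightarrow Q \longrightarrow 1,
\]
where $Q = \Map(S)/\FMap(S)$ is the group of permutations of the isolated maximal ends realized by mapping classes; this $\FMap(S)$ is normal because any mapping class permutes isolated maximal ends. First I would fix, for each of the $M$ equivalence classes in $\mathcal{M}(S)/\sim$, a representative maximal end $A_1, \dots, A_M$, and for each ordered pair a representative of every class of common predecessors in $E_{cp}(A_i,A_j)$. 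The self-similar structure guarantees, via Theorems~\ref{thm:Map_gen} and~\ref{thm:uncountable_case_ex}, that the block of $S$ associated to each single type is topologically normally generated, so the whole problem reduces to assembling finitely many block generators together with finitely many mixing elements that tie the blocks together.

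I would then split $T$ into two families matching the identity $M(M+C-1) = M(M-1) + MC$. The diagonal family consists of $C$ generators for each type $i$: one of these is the topological normal generator of the self-similar block at $A_i$ supplied by Theorems~\ref{thm:Map_gen}/\ref{thm:uncountable_case_ex}, while the remaining ones are representative shifts $\eta_{A,B,z}$, together with handle shifts $h_{A,B}$ where both ends lie in $G_0$, indexed by the at most $C$ classes of common predecessors $z \in E_{cp}(A,B)$ with $A, B$ of type $i$. The off-diagonal family consists of one representative generalized shift $\eta_{A_i, A_j, z}$ for each of the $M(M-1)$ ordered pairs of distinct types admitting a common predecessor; these are precisely the elements that cannot be recovered from the diagonal family, since a mapping class never carries a maximal end of one type to a maximal end of an inequivalent type.

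Next I would verify that $\overline{\langle\langle T \rangle\rangle}$ exhausts $\Map(S)$ in three stages. (i) Using that each type is self-similar, I would show that conjugating a single chosen shift by elements already in the normal closure reproduces every shift in its conjugacy class; since shifts whose predecessors have the same $\mathcal{M}$-type are mutually conjugate, $\overline{\langle\langle T \rangle\rangle}$ contains every $\eta_{A,B,z}$ and every $h_{A,B}$, hence a dense subgroup of the abelian factor $\prod \langle \eta_{A,B,z}\rangle \oplus \prod \langle h_{A,B}\rangle$ of Theorem~\ref{thm:semi_directprod_of_FMap}. (ii) I would show that the block normal generators, conjugated by the shifts and mixing elements, densely generate the kernel $\mathcal{F}$ of finitely bounded classes, so that together with (i) the full group $\FMap(S)$ is captured. (iii) Finally, I would check that the images of $T$ in $Q$ generate $Q$, so that $\overline{\langle\langle T \rangle\rangle}$ surjects onto the quotient; combined with the semidirect product decomposition this forces $\overline{\langle\langle T \rangle\rangle} = \Map(S)$.

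The main obstacle will be stage (i): because the abelian factor is a direct \emph{product} rather than a direct sum of copies of $\mathbb{Z}$, it carries the product topology and is uncountable, so density cannot be achieved by any countable subgroup spread along a diagonal. The crux is therefore to use self-similarity quantitatively, realizing for each finite set of coordinates and each target vector a single mapping class in the normal closure that reproduces the prescribed countings simultaneously and independently. Controlling this, while simultaneously ensuring that no generator is wasted so that the count is exactly $M(M-1)+MC = M(M+C-1)$ rather than a larger multiple of $C$, is the delicate bookkeeping at the heart of the proof; a secondary difficulty is confirming in stage (ii) that the finitely many block generators genuinely suffice to recover all of $\mathcal{F}$ after fragmentation.
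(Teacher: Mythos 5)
Your overall strategy --- finitely many ``block'' generators plus finitely many shifts, assembled through the structure of $\FMap(S)$ --- is not the paper's route, and as written it has two genuine gaps.

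First, stage (iii) fails for the set $T$ you propose. Every element of $T$ fixes each isolated maximal end: generalized shifts and handle shifts lie in $\FMap(S)$ by construction, and a normal generator of a single self-similar block is supported in that block, so it cannot permute maximal ends of $S$. Hence the image of $T$ in $Q = \Map(S)/\FMap(S)$ is trivial, and since $\FMap(S)$ is closed, $\overline{\langle\langle T \rangle\rangle} \subseteq \FMap(S)$, which is a proper subgroup whenever $S$ has two or more isolated maximal ends of the same type (e.g.\ the surface of Figure \ref{fig:contable_surface_decomp.png}). Permutation-realizing elements (half twists) must be reachable from the generating set; in the paper these are accounted for inside the finite-type core $K$ of the decomposition preceding Proposition \ref{prop:dehn_along_boundaries}, and Proposition \ref{prop:Map(K)} spends $M(M-2)$ of the generator budget on exactly this core. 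Your count $M(M-1)+MC$ allocates those generators to inter-type shifts instead and leaves nothing for the core, so your bookkeeping does not match the paper's split $CM + M(M-2) + M$; relatedly, your diagonal family reserves only $C-1$ shift slots per type, while a same-type pair $A \sim B$ can already require $C$ classes in $E_{cp}(A,B)$.

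Second, stage (ii) --- capturing the kernel $\mathcal{F}$ --- is the actual crux, and your mechanism for it does not work as stated. Theorems \ref{thm:Map_gen} and \ref{thm:uncountable_case_ex} concern boundaryless surfaces, with normal closures taken inside their own mapping class groups; what is needed here is that $\Map(B_i,\partial B_i)$, viewed as a subgroup of $\Map(S)$, lies in $\overline{\langle\langle T \rangle\rangle}$, and topological normal generation of the capped-off block's mapping class group does not transfer to that statement (invoking Theorem \ref{thm:uncountable_case_ex} here is also partly circular, since it is proved by the very machinery below). The paper instead proves $\mathcal{V}_K \subseteq \overline{\langle\langle \mathcal{S} \rangle\rangle}$ directly by two swindles: the Malestein--Tao half-space swindle (Lemmas \ref{lem:half-space_displacable} and \ref{lem:lemma3.7_malestein_tao}) for blocks whose maximal ends form a Cantor set, and the shift swindle (Proposition \ref{prop:swindle}) combined with the Lanier--Vlamis dense subgroup of finitely bounded classes for uniquely self-similar blocks. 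This also dissolves what you call the main obstacle: once the identity neighborhood $\mathcal{V}_K$ lies in $\overline{\langle\langle T \rangle\rangle}$, that subgroup is open, hence closed, so it suffices that it contain Mann--Rafi's CB generating set (Proposition \ref{prop:prop5.4_mannrafi}) --- no density statement in any product topology is ever needed. Proposition \ref{prop:prop5.4_mannrafi} is the engine of the entire reduction, and your proposal never invokes it.
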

For the lower bound, we have the following.
\begin{thm}{(Corollary \ref{cor:lower_bound_of_normal_gens})}\label{thm:thmalpha_lowerbound}
	$\Map(S)$ is normally generated by at least $\max(1, M_{iso}-1)$ elements.
\end{thm}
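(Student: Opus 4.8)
The plan is to produce an abelian quotient of $\Map(S)$ whose minimal number of topological generators is $M_{iso}-1$, and then invoke the principle that topological normal generation descends to topological generation under continuous abelian quotients. Concretely, suppose $\Map(S)$ is topologically normally generated by elements $g_1, \dots, g_n$, and let $\phi \colon \Map(S) \to A$ be a continuous surjection onto an abelian Hausdorff group $A$. Since $A$ is abelian, the normal closure of each $\phi(g_i)$ equals the subgroup it generates, so the $\phi(g_i)$ together topologically generate $A$; hence $n$ is at least the minimal number of topological generators of $A$. It therefore suffices to exhibit a continuous surjection $\Map(S) \to \mathbb{Z}^{M_{iso}-1}$, since discrete $\mathbb{Z}^{k}$ requires at least $k$ generators. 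This is the quantitative refinement of Lemma \ref{lem:vlamis_obs}.

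To build this quotient I would use the structure from Theorem \ref{thm:semi_directprod_of_FMap}. Enumerate the types of isolated maximal ends $T_1, \dots, T_{M_{iso}}$. For each type $T_i$, the shift generators $\eta_{A,B,z}$ and $h_{A,B}$ that have an endpoint accumulating to $T_i$ define a \emph{flux} counting the net transport of topology into the ends of type $T_i$; assembling these gives a homomorphism $\Phi_i \colon \FMap(S) \to \mathbb{Z}$ obtained by projecting onto the relevant $\mathbb{Z}$-factors of the semidirect product and summing with signs. The kernel $\mathcal{F}$ of finitely bounded classes lies in $\ker \Phi_i$, and continuity follows because each coordinate projection in the product appearing in Theorem \ref{thm:semi_directprod_of_FMap} is continuous.

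Next I would promote the $\Phi_i$ to homomorphisms on all of $\Map(S)$ and verify the single relation. The flux into a \emph{type} (rather than into an individual end) is invariant under the permutations of same-type ends induced by elements of $\Map(S)$, and distinct types cannot be interchanged by any mapping class; this is what makes each $\Phi_i$ well defined and conjugation-invariant on $\Map(S)$. Every shift transports topology from one maximal-end type to another, so the total flux satisfies the conservation law $\sum_{i=1}^{M_{iso}} \Phi_i = 0$. Exhibiting, for each pair $(T_i, T_j)$, an explicit shift realizing $+1$ into $T_j$ and $-1$ into $T_i$ shows that $\Phi = (\Phi_1, \dots, \Phi_{M_{iso}}) \colon \Map(S) \to \mathbb{Z}^{M_{iso}}$ has image exactly the rank-$(M_{iso}-1)$ sublattice $\{\sum a_i = 0\}$, yielding the desired surjection onto $\mathbb{Z}^{M_{iso}-1}$.

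The main obstacle I anticipate is the promotion step: checking that the flux is genuinely additive and conjugation-invariant on the full group $\Map(S)$, where an element may simultaneously permute same-type ends and perform shifts, rather than only on $\FMap(S)$, where the isolated maximal ends are fixed pointwise. This requires that the counting underlying Theorem \ref{thm:semi_directprod_of_FMap} be stable under such permutations, which is precisely where the hypothesis that the relevant ends are isolated in $\mathcal{M}(S)$ is used. Finally, the bound $\max(1, M_{iso}-1)$ accounts for the degenerate range $M_{iso} \le 2$: any nontrivial topological group needs at least one normal generator, so the lower bound is never smaller than $1$.
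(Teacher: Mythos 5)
Your first paragraph is exactly the paper's (implicit) argument: the quantitative form of Lemma \ref{lem:vlamis_obs}, namely that a continuous surjection onto an abelian group $A$ forces the number of topological normal generators to be at least the minimal number of topological generators of $A$. Where you diverge from the paper is in how the target group is built, and this is where your proposal has a genuine gap. The paper splits $M_{iso}=M_1+M_2$, where $M_2$ counts the isolated maximal types having at least two ends of that type and $M_1$ counts those realized by a unique end. For the $M_2$ types it uses \emph{no integer flux at all}: it composes the forgetful map $\Map(S)\twoheadrightarrow S_{n_1}\times\cdots\times S_{n_k}$ with the sign maps to get $(\mathbb{Z}/2\mathbb{Z})^{M_2}$, and it reserves the flux homomorphisms $\Phi$ of Section \ref{sec:cohomology} for the $M_1$ unique-end types, obtaining a surjection onto $\mathbb{Z}^{M_1-1}\times(\mathbb{Z}/2\mathbb{Z})^{M_2}$, which needs $M_1-1+M_2=M_{iso}-1$ generators.

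Your plan instead assigns a $\mathbb{Z}$-valued flux $\Phi_i$ to \emph{every} isolated type, including multi-end types, and claims the image of $(\Phi_1,\dots,\Phi_{M_{iso}})$ is the full rank-$(M_{iso}-1)$ sublattice. This fails, and not merely for lack of proof. First, by Lemma \ref{lem:same_conjugacy_xx}, a shift $\eta_{x_i,x_j,z}$ between two ends of the \emph{same} type is conjugate (via the half twist $g_{i,j}$) to its own inverse, so its image in any abelian quotient is $2$-torsion; hence no $\mathbb{Z}$-valued homomorphism can detect transport between same-type ends, and your $\Phi_i$ on a multi-end type $T_i$ can only be nonzero on shifts entering $T_i$ from a \emph{different} type. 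Such shifts need not exist: $E_{cp}(A,B)$ can be empty for every $A$ of type $T_i$ and $B$ of another type (and there need be no handle-shift connection either). Concretely, for $S$ the connected sum of a Jacob's ladder with a surface having two ends of type $\omega^2$, every one of your fluxes is identically zero, so your quotient has rank $0$, while the paper's sign maps still give $(\mathbb{Z}/2\mathbb{Z})^{2}$; the permutation information here is genuinely $2$-torsion, so replacing the paper's $(\mathbb{Z}/2\mathbb{Z})^{M_2}$ factor by free abelian factors is impossible, not just harder. Two further remarks: your conservation law $\sum_i\Phi_i=0$ is also false in general, since topology can flow into non-isolated (Cantor) maximal types or into non-maximal ends (Case (2) in the proof of Proposition \ref{prop:upper_bound_of_handle_shifts}) --- this direction of error is harmless for a lower bound but invalidates your identification of the image; and the ``promotion'' issue you flag is real, being precisely the failure described in the remark after Proposition \ref{prop:parity_map} (the Jacob's ladder involutions), which the paper sidesteps for permutable ends exactly by retreating to the sign homomorphism rather than an integer count.
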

Here, $M_{iso}$ is the number of distinct types of \emph{isolated maximal ends} in $S$ that is not the same type with a puncture.

If we denote $\mathbf{n}(S)$ as the minimum number of normal generator set of $\Map(S)$, then by combining Theorem \ref{thm:thmalpha_normgenset} and \ref{thm:thmalpha_lowerbound}, we have the following inequalities.
\[
	\max(1,M_{iso} - 1) \leq \mathbf{n}(S) \leq M(M+C-1)
\]
We remark that $\frac{\mathbf{n}(S)}{(M+C)^2}$ is bounded. 
\emph{i.e.,} the minimal number of topological normal generators grows quadratically by the sum of $2$ constants that only depend on the topology of the surface.

\subsection{Bounds of the rank of the abelianization of $\Map(S)$}
We introduce an application of the topological normal generation.
We first recall the result by Field, Patel and Rasmussen as follows. 
\begin{thm}{(Theorem 1.1, 1.4 in \cite{field2022sclbigmcg})}
	Let $S$ be an infinite type surface with tame end space such that $\Map(S)$ is CB generated.
	Suppose $S$ is an infinite type surface that every equivalence class of maximal ends of $S$ is infinite, except (possibly) for finitely many isolated planar ends.
	Then,
	\begin{enumerate}
		\item The commutator subgroup $[\Map(S), \Map(S)]$ is Polish and both open and closed subgroup of $\Map(S)$.
		\item The abelianization of $\Map(S)$ is finitely generated and is discrete when endowed with the quotient topology.
	\end{enumerate}
\end{thm}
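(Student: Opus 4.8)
The plan is to derive both conclusions from the single assertion that the commutator subgroup $N := [\Map(S), \Map(S)]$ is \emph{open}. Granting this, part (1) is soft: an open subgroup of a topological group is automatically closed (its complement is a union of cosets, each of them open), so $N$ is clopen, and a closed subgroup of a Polish group is Polish. For part (2), an open normal subgroup forces the quotient topology on $\Map(S)/N$ to be discrete; since $\Map(S)$ is Polish, hence separable, the quotient is separable and discrete, therefore countable. Moreover $\Map(S)/N$ is CB generated, being the continuous homomorphic image of the CB generated group $\Map(S)$ (the image of a CB generating set is CB and generates the quotient). Finally, by Rosendal's theory a countable discrete group is CB generated if and only if it is finitely generated, so $\Map(S)^{ab}$ is finitely generated. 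Thus everything reduces to openness of $N$.

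To prove that $N$ is open it suffices to show that $N$ contains a basic open neighborhood of the identity. Recall that the subgroups $U_K$, consisting of mapping classes represented by homeomorphisms restricting to the identity on a compact subsurface $K$, form a neighborhood basis of the identity. I would fix $K$ large enough that its complement contains none of the finitely many exceptional isolated planar ends, so that every class in $U_K$ is supported in the region of $S$ governed entirely by the maximal ends, each of whose equivalence class is infinite. The claim is then $U_K \subseteq N$, which yields openness.

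The mechanism behind $U_K \subseteq N$ is an infinite swindle enabled by the infinite self-similarity of the maximal-end equivalence classes. Given $\phi \in U_K$, the infinitude of the relevant equivalence classes produces a mapping class $h \in \Map(S)$ whose iterated images $\{h^n(\supp \phi)\}_{n \geq 0}$ are pairwise disjoint and eventually escape every compact set; the infinite product $\Phi = \prod_{n \geq 0} h^n \phi h^{-n}$ then converges in the compact--open topology, and the identity $\phi = \Phi \,(h \Phi^{-1} h^{-1})$ exhibits $\phi$ as a commutator. Compactly supported classes are handled directly in this way, and the non-compactly-supported generators coming from handle shifts and generalized shifts are absorbed by a variant swindle in which the infinite equivalence class supplies a self-homeomorphism conjugating a shift onto a disjoint union of copies of itself.

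The main obstacle is exactly this last point: verifying that the displacing homeomorphism $h$ exists and that the swindle converges requires a careful use of the partial order and the self-similar combinatorics of $E(S)$ (Section \ref{sec:mann_rafi}), and the shift coordinates from Theorem \ref{thm:thmalpha_firstcohom} must be shown to collapse into $N$ rather than contributing infinite rank to the abelianization — this is precisely where the hypothesis on infinite equivalence classes is indispensable, as Domat's example shows the conclusion fails without it. Once openness is established, finite generation of $\Map(S)^{ab}$ is automatic from the reduction above, so no separate computation of the quotient is needed beyond confirming that the finitely many exceptional isolated planar ends contribute only finitely many surviving generators.
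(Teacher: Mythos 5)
You should first note that this statement is not proved in the paper at all: it is quoted verbatim from Field--Patel--Rasmussen \cite{field2022sclbigmcg} (their Theorems 1.1 and 1.4) and used as a black box, so there is no internal proof to compare against; what follows assesses your argument on its own terms. Your reduction is the clean part and is correct: an open subgroup is closed (its complement is a union of open cosets), a closed subgroup of a Polish group is Polish; if $N=[\Map(S),\Map(S)]$ is open then the abelianization is discrete, and separability of the Polish group $\Map(S)$ makes it countable; CB generation passes to continuous homomorphic images, and a countable discrete group is CB generated iff it is finitely generated. So everything indeed hinges on openness, i.e.\ on $\mathcal{V}_K\subseteq N$ for some compact $K$.

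The gap is in the swindle that is supposed to give $\mathcal{V}_K\subseteq N$. For $\phi\in\mathcal{V}_K$ the support is in general \emph{all} of $S\setminus K$ (think of an infinite product of Dehn twists marching out every end), and no $h\in\Map(S)$ can satisfy $h(S\setminus K)\cap(S\setminus K)=\emptyset$: that would force $h(S\setminus K)\subseteq K$, which is absurd since $K$ is compact. The same obstruction persists region by region: a complementary region $A$ of $K$ may contain an entire $\Map(S)$-invariant equivalence class of maximal ends, and since any homeomorphism preserves that class setwise, $h(A)\cap A\neq\emptyset$ for every $h$, so $A$ is not displaceable even when its maximal class is a Cantor set. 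The missing idea --- and the actual heart of the Field--Patel--Rasmussen argument --- is \emph{fragmentation}: one must first write $\phi\in\mathcal{V}_K$ as a finite product of mapping classes each supported in a \emph{half-space} of a self-similar complementary region, and only then apply the swindle, because half-spaces (unlike whole complementary regions) do admit infinitely many pairwise disjoint homeomorphic translates; this is exactly where the hypothesis that the maximal classes are infinite (hence Cantor sets, hence the pieces are perfectly self-similar) is used, via statements like Lemma \ref{lem:half-space_displacable} and Lemma \ref{lem:lemma3.7_malestein_tao} in Section \ref{sec:uncountable}. Without the fragmentation step your displacing homeomorphism $h$ simply does not exist, so the proof of openness does not close; the finitely many isolated punctures also need a separate argument (their boundary twists are not covered by any displacement), which your sketch does not supply.
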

Applying their result of \cite{field2022sclbigmcg}, Corollary \ref{thm:thmalpha_normgenset} gives an upper bound of the rank.
\begin{cor}{Corollary \ref{cor:Z2rank_of_ab}}\label{thm:thmalpha_Z2ab}
	Suppose $S$ is a finite connected sum of perfectly self-similar surfaces. 
	Then the abelianization of $\Map(S)$ is finitely generated by at most $M(M+C-1)$.
\end{cor}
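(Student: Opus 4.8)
The plan is to combine the Field--Patel--Rasmussen structure theorem with the explicit topological normal generating set supplied by Theorem \ref{thm:n_of_generator}, using discreteness of the abelianization to upgrade \emph{topological} generation to honest finite generation.

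First I would verify that $S$, being a finite connected sum of perfectly self-similar surfaces, satisfies the hypotheses of the cited theorem of \cite{field2022sclbigmcg}. Since each perfectly self-similar summand contributes maximal ends lying in infinite equivalence classes (self-similarity forces each maximal end to be accumulated by ends of its own type), a finite connected sum has only finitely many equivalence classes of maximal ends, each infinite, with at most finitely many isolated planar ends remaining. Hence the cited theorem applies, and the abelianization $A := \Map(S)/[\Map(S),\Map(S)]$ is finitely generated and discrete in the quotient topology; equivalently, $[\Map(S),\Map(S)]$ is open in $\Map(S)$.

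Next I would invoke Theorem \ref{thm:n_of_generator} to produce a finite set $\{g_1,\dots,g_N\} \subseteq \Map(S)$ with $N = M(M+C-1)$ whose normal closure $D$ is dense in $\Map(S)$. Let $q \colon \Map(S) \to A$ be the (continuous, surjective) quotient map. Because conjugation acts trivially on the abelian quotient, every conjugate $h g_i h^{-1}$ satisfies $q(h g_i h^{-1}) = q(g_i)$, so $q(D) = \langle q(g_1),\dots,q(g_N)\rangle$. As $q$ is continuous and surjective and $D$ is dense, its image $\langle q(g_1),\dots,q(g_N)\rangle$ is dense in $A$. Since $A$ is discrete, a dense subgroup must be the whole group, whence $A = \langle q(g_1),\dots,q(g_N)\rangle$ is generated by $N = M(M+C-1)$ elements, which is exactly the claimed bound.

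The main obstacle I anticipate is the first step: pinning down precisely that a finite connected sum of perfectly self-similar surfaces falls under the Field--Patel--Rasmussen hypothesis on equivalence classes of maximal ends, and in particular checking that forming connected sums neither destroys the infiniteness of these classes nor introduces exceptional ends that are non-planar or non-isolated. Once discreteness of $A$ is secured, the remainder is a formal manipulation, since the passage from dense normal generation in $\Map(S)$ to finite generation of the discrete abelianization is automatic.
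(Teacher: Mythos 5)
Your proposal is correct and takes essentially the same route as the paper: the paper's proof likewise combines the Field--Patel--Rasmussen theorem (discreteness of the abelianization) with Theorem \ref{thm:n_of_generator}, observing that the images of the $M(M+C-1)$ topological normal generators must generate the discrete abelianization since the rank of the abelianization cannot exceed the number of topological normal generators. The paper states this in one line, leaving implicit both the verification of the Field--Patel--Rasmussen hypotheses and the ``dense subgroup of a discrete group is everything'' step, which your write-up spells out correctly.
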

The proof is straightforward, since the number of topological normal generators can not exceed the rank of its abelianization.
However, we cannot get an effective lower bound for the rank by using the inequality in Theorem \ref{thm:thmalpha_lowerbound}, as $M_{iso}$ counts the isolated maximal ends, which is not of the same type as a puncture.

\subsection*{Outline}
In Section \ref{sec:mann_rafi}, we recall some definitions and introduce the work of Mann and Rafi \cite{mann2023large}.
The obstruction to topological normal generation is presented in Section \ref{sec:obstruction}.
Before proving the main theorem, we breifly introduce a generalized shift map in Section \ref{sec:shifts}.
In Section \ref{sec:cohomology} we construct a first integral cohomology for $\FMap(S)$ and prove Theorem \ref{thm:thmalpha_firstcohom}.
Using cohomology classes, we prove Theorem \ref{thm:thmalpha_countable} in Section \ref{sec:parity}.
In Section \ref{sec:uncountable}, we provide additional examples of surfaces with uncountable end space whose mapping class groups are topologically normally generated, by proving Theorem \ref{thm:thmalpha_uncountable}.
In Section \ref{sec:n_of_norm_gen}, we study the set of topological normal generators for CB generated cases and prove Theorem \ref{thm:thmalpha_normgenset}, \ref{thm:thmalpha_lowerbound} and Corollary \ref{thm:thmalpha_Z2ab}.
We conclude the article with a remark on the cases of locally CB (but not CB generated) in Section \ref{sec:locCB_not_CBgend}. 

\subsection*{Funding}
This work was supported by National Research Foundation of Korea(NRF) grant funded by the Korea government(MSIT) [No. 2020R1C1C1A01006912].

\subsection*{Acknowledgement}
The author would like to thank the University of Utah, and especially Mladen Bestvina, for their hospitality during the visit.
Also, the author appreciates Harry Hyungryul Baik, Mladen Bestvina, Thomas Hill, Priyam Patel and Nicholas Vlamis for their helpful comments. 
In particular, without the kind support and influential comments from Sanghoon Kwak, this project would not have benefited from the improvements that we have made.

\section{Mann-Rafi's work}\label{sec:mann_rafi}
\subsection{Infinite type surfaces}
Throughout the paper, we denote $S$ as an infinite type surface without boundary unless otherwise specified.
The \emph{end} of $S$ is an equivalence class of a nested sequence of open, connected sets $\{U_i\}_{i\in \mathbb{Z}_{\geq 0}}$ such that $S \supset U_0 \supset U_1 \supset \cdots \supset U_k \supset \cdots$ with $\overline{U_{i+1}} \subset U_i$ for all $i$ and $\cap_{i = 0} U_i$ is empty.
Two such sequences $\{V_i\}$ and $\{W_j\}$ are equivalent if for any $n$, there exists $m$ such that $W_m \subset V_n$ and for any $m$, there exists $n$ such that $V_n \subset W_m$.

Let $E = E(S)$ denote the collection of all ends of $S$, and we call it \emph{end space of $S$}.
$E^G = E^G(S)$ is a subset of the end space, which collects all the ends that are accumulated by genus.
That is, $\{U_i\}$ belongs to $E^G(S)$ if every $U_i$ has genus, and hence infinitely many genus.
For a subsurface $\Sigma \subset S$ and $x = \{U_i\} \in E(S)$, we say $x \in \Sigma$ if there exists $n$ such that for all $k\geq n$, $U_k \subset \Sigma$.
According to \cite{kerekjarto1923vorlesungen}, \cite{richards1963classification}, the pair $(E, E^G)$ forms a closed nested set of the Cantor set. 
Also, every infinite type surface with compact boundary is characterized by the $5$-uple $(p, b, g, E, E^G)$, where $p$ is the number of punctures, $b$ is the number of boundaries, and $g$ is the genus.
We endow a compact-open topology on $\mathrm{Homeo}^+(S)$, the orientation preserving homeomorphism group of $S$.
The big mapping class group $\Map(S)$ of $S$ is a group $\mathrm{Homeo}^+(S) / \mathrm{Homeo}_0(S)$, where $\mathrm{Homeo}_0(S)$ is a path component of the identity element of $\mathrm{Homeo}^+(S)$.
Therefore $\Map(S)$ is a topological group equipped with the quotient topology and is Polish.

By its nature, the big mapping class group is neither locally compact nor compactly generated.
Due to its complex structure, standard geometric group theoretic tools cannot be directly applied to the big mapping class group.
To address this issue, we follow the notion proposed by Rosendal \cite{rosendal2022coarse} which is analogous to the compactness, so called \emph{coarsely bounded}.
\begin{defn}\label{defn:CB}
	Let $G$ be a topological group and $A \subset G$ be a subset.
	$A$ is called \emph{coarsely bounded}, (abbreviated CB), if every compatible left-invariant metric on $G$ gives $A$ a finite diameter.
\end{defn}
A group $G$ is said to be \emph{CB generated} if it contains a CB set that generates $G$, and $G$ is \emph{locally CB} if there exists a CB neighborhood of an identity.

\subsection{Mann-Rafi's classification}
In \cite{mann2023large}, the authors provide a necessary and sufficient condition for $S$ such that $\Map(S)$ is locally CB.
We first introduce a preorder on $E$, defined as follows.
\begin{defn}[Preorder on $E(S)$, \cite{mann2023large}]
	Let $\preccurlyeq$ be the binary relation on $E$ such that $y \preccurlyeq x$ if and only for any neighborhood $U$ of $x$, there exists a neighborhood $V$ of $x$ and $f \in \Map(S)$ so that $f(V) \subset U$.
\end{defn}
In other words, $y\preccurlyeq x$ implies that a copy of neighborhood of $y$ (and hence $y$ itself) must appear nearby $x$.
It follows from the definition that $\preccurlyeq$ is a preorder on the end space.
We follow the notation in the paper that for $x\sim y$ if and only if $x \preccurlyeq y$ and $y \preccurlyeq x$.
By definition, $\sim$ is an equivalence relation.
Let $E(x)$ denote a subset $\{y \in E(S)~|~ y\sim x\}$.
They also prove that there are only finitely many types of maximal ends if $\Map(S)$ is locally CB.

\begin{prop}[Proposition 4.7, in \cite{mann2023large}]\label{prop:mannrafi_prop4.7}
	Suppose $\Map(S)$ is locally CB.
	$E(S)$ has maximal elements under $\preccurlyeq$.
	Furthermore $E(x)$ is either finite or a Cantor set for every maximal element $x$.
\end{prop}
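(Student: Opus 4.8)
The plan is to treat the two assertions separately, leaning on the compactness of the end space and the $\Map(S)$-invariance of $\preccurlyeq$ (namely $a\preccurlyeq b \iff g(a)\preccurlyeq g(b)$ for $g\in\Map(S)$, which is immediate from the definition by conjugating the witnessing maps).

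For the existence of maximal elements I would pass to the partial order $(E(S)/{\sim},\ \preccurlyeq)$ and invoke Zorn's lemma, so the entire content is to show that every chain admits an upper bound. Recall that $(E,E^G)$ sits inside a Cantor set, so $E(S)$ is compact and metrizable. Given a chain $\{[y_i]\}_{i\in I}$, choose representatives $y_i\in E(S)$; the net $(y_i)_{i\in I}$ then has a subnet converging to some $y^\ast\in E(S)$. I claim $[y^\ast]$ dominates the chain. Fix $i$ and any neighborhood $U$ of $y^\ast$. By convergence together with cofinality of the subnet there is an index $j\ge i$ in the chain with $y_j\in U$; since $U$ is open it is a neighborhood of $y_j$, and because $y_i\preccurlyeq y_j$ the definition of the preorder produces a neighborhood $V$ of $y_i$ and an $f\in\Map(S)$ with $f(V)\subset U$. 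As $U$ was arbitrary this says exactly $y_i\preccurlyeq y^\ast$, so $[y^\ast]$ is an upper bound and Zorn's lemma yields a maximal element.

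For the structure of $E(x)$ I would first show it is closed, hence compact. The argument above, applied to a sequence rather than a chain, gives more: if $y_n\in E(x)$ and $y_n\to y^\ast$, then since each $x\preccurlyeq y_n$ the same neighborhood-pushing shows $x\preccurlyeq y^\ast$. Now maximality of $x$ forces the reverse inequality $y^\ast\preccurlyeq x$ — otherwise $[x]<[y^\ast]$ — so $y^\ast\sim x$ and $y^\ast\in E(x)$. Thus $E(x)$ is closed in $E(S)$ and therefore compact; being a subspace of a Cantor set it is also metrizable and totally disconnected. By Brouwer's characterization it only remains to decide between the finite case and the perfect (Cantor) case, i.e.\ to prove that $E(x)$ cannot have both an isolated point and an accumulation point.

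This last step is where the self-similarity of maximal ends must enter, and it is the main obstacle. The naive transfer fails: if $y\in E(x)$ is isolated, witnessed by a clopen $O$ with $O\cap E(x)=\{y\}$, and $y'\sim y$, the map $f$ furnished by $y'\preccurlyeq y$ satisfies $f(V)\subset O$ but need not send $E(x)$ into $E(x)$, since the witnesses of $\preccurlyeq$ are only required to embed neighborhoods, not to preserve the class $E(x)$. I would instead use that a maximal end is self-similar: any clopen neighborhood of $x$ contains a clopen subset whose end-pair is homeomorphic to that of a fixed neighborhood of $x$, and these identifications can be realized by elements of $\Map(S)$. Combined with the $\Map(S)$-invariance of $\sim$, this upgrades the witnesses to type-preserving maps and lets me propagate isolatedness, or its failure, uniformly across the single $\sim$-class $E(x)$: either every point of $E(x)$ is isolated — forcing $E(x)$ discrete and compact, hence finite — or none is, forcing $E(x)$ perfect and thus a Cantor set. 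I expect the delicate part to be choosing these self-similar clopen neighborhoods compatibly and checking that the identifications genuinely extend to mapping classes.
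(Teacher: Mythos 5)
The paper itself gives no proof of this proposition---it is imported verbatim from Mann--Rafi---so your attempt can only be judged against the source's argument. Your first two steps are correct and essentially match it: existence of maximal elements via compactness of $E(S)$, a convergent subnet of representatives of a chain, and Zorn's lemma; and closedness of $E(x)$ (hence compactness, metrizability, total disconnectedness) via the same neighborhood-pushing argument plus maximality. The genuine gap is the finite-or-Cantor dichotomy, which you explicitly leave open. Your proposed substitute does not close it: the ``self-similarity of maximal ends'' you invoke---that every clopen neighborhood of $x$ contains a clopen subset whose end pair is a copy of a fixed neighborhood of $x$, realized by elements of $\Map(S)$---is not a consequence of maximality. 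That property is essentially the stability condition of Definition \ref{defn:limit,stable}, which can fail for maximal ends (this failure is exactly why tameness appears as a separate hypothesis in this paper), and extracting anything like it from the locally CB hypothesis would require the Mann--Rafi classification machinery, whose development rests on the present proposition. So the step where, by your own account, ``the entire content'' of the second assertion lies is unproven, and the route you sketch for it would be circular or false.

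What you missed is a one-line observation that makes your discarded ``naive transfer'' work verbatim: every end is equivalent to its image under any mapping class. Indeed, for $g \in \Map(S)$ and any end $w$, given a neighborhood $U$ of $g(w)$, take $V := g^{-1}(U)$ and the witness $g$ itself, so that $g(V) \subset U$; hence $w \preccurlyeq g(w)$, and applying the same with $g^{-1}$ gives $g(w) \preccurlyeq w$, so $g(w) \sim w$. Consequently each class $E(x)$ is \emph{setwise invariant} under all of $\Map(S)$, not merely permuted among classes. Now suppose $y \in E(x)$ is isolated, witnessed by a clopen $O$ with $O \cap E(x) = \{y\}$, while $z \in E(x)$ is an accumulation point of $E(x)$. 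Then $z \preccurlyeq y$ produces a neighborhood $V$ of $z$ and $f \in \Map(S)$ with $f(V) \subset O$; since $V \cap E(x)$ is infinite, $f$ is injective on ends, and $f\big(V \cap E(x)\big) \subset O \cap E(x) = \{y\}$, this is a contradiction. Hence either every point of $E(x)$ is isolated (and compactness forces $E(x)$ finite) or none is (and $E(x)$ is perfect, hence a Cantor set by the Brouwer characterization you cite). This invariance observation is exactly how the source resolves the step, and it renders the extra self-similarity machinery unnecessary.
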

We denote that $\mathcal{M}(S)$ be a set of maximal elements in $E= E(S)$.
Let $x \in \mathcal{M}(S)$.
We call $z$ an \emph{immediate predecessor} of $x$ if $z \preccurlyeq w \preccurlyeq x$ and $w\not\sim x$ implies $z \sim w$.
For a maximal end $x$, we denote $Im(x)$ as the set of types of the immediate predecessors of $x$.
We slightly abuse the notation of immediate predecessors so that $Im(x)$ contains a \emph{handle} if $x$ is accumulated by genus and no immediate predecessors are accumulated by genus.
If $\Map(S)$ is locally CB, there are only finitely many different types of maximal ends.
\begin{lem}[Lemma 5.3 in \cite{mann2023large}]
	If $\Map(S)$ is locally CB, then the number of distinct types in $\mathcal{M}(S)$ is finite.
\end{lem}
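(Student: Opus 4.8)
The plan is to deduce the finiteness from the coarse boundedness of a single open neighborhood of the identity, together with the fact that a finite-type subsurface has only finitely many complementary regions. First I would fix a compact, clean, finite-type subsurface $\Sigma \subset S$ so that the open subgroup $\mathcal{V}_\Sigma = \{[\phi] \in \Map(S) : \phi|_\Sigma = \mathrm{id}\}$ is coarsely bounded. Such a $\Sigma$ exists because local CB-ness supplies a CB neighborhood of the identity, this neighborhood contains some $\mathcal{V}_\Sigma$ from the standard neighborhood basis of open subgroups, and any subset of a CB set is again CB (a subset of a finite-diameter set has finite diameter). Since $\Sigma$ is compact it has no ends, so every end of $S$, in particular every maximal end, lies in one of the finitely many complementary components $C_1, \dots, C_m$ of $S \setminus \Sigma$.

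Next I would localize the equivalence classes to these components. Every $x \in \mathcal{M}(S)$ determines a unique $C_j$ containing a neighborhood of $x$, so it suffices to prove that each $C_j$ carries only finitely many $\sim$-types of maximal ends; summing over the $m$ components then bounds $\big\vert \mathcal{M}(S)/\!\sim \big\vert$. By Proposition \ref{prop:mannrafi_prop4.7} the set of ends of each fixed type is either finite or a Cantor set, which I would use to separate distinct types by disjoint clopen sets of ends, hence by disjoint subsurfaces.

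The core step — and the main obstacle — is to show that a single component $C = C_j$ cannot contain infinitely many pairwise non-equivalent maximal ends. I would argue by contradiction: given non-equivalent maximal ends $x_1, x_2, \dots$ in $C$, choose pairwise disjoint clean subsurfaces $R_1, R_2, \dots \subset C$, each separating off a neighborhood of the corresponding $x_i$ and each disjoint from $\Sigma$. Because the $x_i$ are of distinct, $\preccurlyeq$-incomparable types, no ambient mapping class can carry any fixed finite collection of the $R_i$ onto all the others, so a family of mapping classes supported on the $R_i$ (for instance boundary Dehn twists, or the partial shifts permitted by the self-similar structure of the relevant type) cannot be absorbed into boundedly many cosets. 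Making this precise through Rosendal's criterion — that a CB set $A$ satisfies $A \subseteq (FW)^n$ for every identity neighborhood $W$, some finite $F$, and some $n$ — would exhibit elements of $\mathcal{V}_\Sigma$ of arbitrarily large word length relative to $F \cup W$, contradicting the coarse boundedness of $\mathcal{V}_\Sigma$.

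The delicate point I expect to fight with is exactly this last one: turning ``infinitely many incomparable maximal types in $C$'' into a concrete sequence in $\mathcal{V}_\Sigma$ whose lengths diverge. The preorder only guarantees that neighborhoods of comparable ends can be pushed into one another, so I would need to extract from the incomparability of the $x_i$ a genuine obstruction to displacement, and to check both that the chosen supported maps actually lie in $\mathcal{V}_\Sigma$ and that they are independent enough that no finite $F$ and bounded $n$ suffice. Once this displacement-versus-boundedness dichotomy is in hand, the contradiction, and hence the finiteness of $\mathcal{M}(S)/\!\sim$, follows.
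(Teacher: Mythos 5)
First, note that the paper does not prove this lemma at all --- it is quoted directly from \cite{mann2023large} --- so the relevant comparison is with Mann--Rafi's own argument. Your opening reductions are correct and match the natural first move: local CB-ness yields a finite-type $\Sigma$ with $\mathcal{V}_\Sigma$ coarsely bounded (subsets of CB sets are CB), and since $\Sigma$ has finitely many complementary regions it suffices to bound the number of maximal types meeting a single region $C$.

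The genuine gap is the core step, and you have located it yourself but not closed it: nothing in the proposal actually derives a contradiction from ``infinitely many pairwise $\preccurlyeq$-incomparable maximal ends in $C$.'' The assertion that mapping classes supported on the disjoint subsurfaces $R_i$ ``cannot be absorbed into boundedly many cosets'' is a non sequitur as stated: the candidate witnesses (boundary Dehn twists, or any fixed homeomorphisms supported in the $R_i$) are just elements of $\mathcal{V}_\Sigma$, and coarse boundedness does not require them to be displaceable onto one another --- it only requires that they lie in some $(FW)^n$. To violate Rosendal's criterion you must exhibit, for a specific identity neighborhood $W = \mathcal{V}_{\Sigma'}$, a length function (a counting function or homomorphism that is bounded on $F$ and on $W$, hence grows at most linearly on $(FW)^n$) which is unbounded on your witnesses; incomparability of the $x_i$ by itself hands you no such function, and constructing one is precisely the technical heart of Mann--Rafi's analysis. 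Their route runs in the opposite direction from yours: they first show that CB-ness of $\mathcal{V}_\Sigma$ forces the \emph{small zoom condition} of Theorem \ref{thm:Mann-Rafi_locCB} at the maximal ends of each complementary region --- this is where the unbounded-length-function arguments genuinely live --- and then finiteness is immediate: if $x, y$ are maximal ends of $E(S)$ lying in the same region $C$ and zooming holds at $x$, then for any neighborhood $U$ of $x$ one has $f_U(U) \supset C$, so $f_U^{-1}(C) \subset U$ exhibits $y \preccurlyeq x$, and maximality of $y$ forces $x \sim y$. Thus each region carries at most one maximal type, and the count is bounded by the number of regions. Your plan, by trying to pass directly from incomparability to unboundedness, defers exactly the step that cannot be waved through.
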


The surface $S$ is called \emph{self-similar} if for any finite partition of $E(S)$, at least one of the partitions contains a set homeomorphic to $E(S)$.
For example, the infinite flute surface has $E(S) \cong \omega+1$ and $E^G(S)$ is empty.
In any partition of $E(S)$, at least one of the sets must contain infinitely many ends and the maximal end, thus making it self-similar.
There are $2$ types of self-similar surfaces.
\begin{defn}\label{defn:self-similar}
	Let $S$ be a self-similar surface.
	Then $\mathcal{M}(S)$ is either a singleton or a Cantor set.
	We call $S$ \emph{uniquely self-similar} if $\mathcal{M}(S)$ is singleton, otherwise we call $S$ \emph{perfectly self-similar}.
\end{defn}

One of the main theorems in \cite{mann2023large} is that they provide a necessary and sufficient condition for $\Map(S)$ to be locally CB.
\begin{thm}[\cite{mann2023large}]\label{thm:Mann-Rafi_locCB}
	$\Map(S)$ is locally CB if and only if there exists a finite type surface $K \subset S$ with the following properties,
	\begin{enumerate}
		\item Each complementary region of $K$ has one or infinitely many ends and infinite or zero genus.
		\item The complementary regions of $K$ partition $E$ into clopen sets, indexed by finite sets $\mathcal{A}$ and $\mathcal{P}$ such that 
		\begin{itemize}
			\item each $A\in \mathcal{A}$ is self-similar, with $\mathcal{M}(A) \subset \mathcal{M}(E)$ and $\mathcal{M}(E) \subset \bigcup_{A\in \mathcal{A}}\mathcal{M}(A)$,
			\item each $P \in \mathcal{P}$ is homeomorphic to a clopen subset of some $A\in\mathcal{A}$,
			\item (small zoom condition) for any $x_A \in \mathcal{M}(A)$ and any neighborhood $V$ of the end $x_A$ in $S$, there is a homeomorphism $f_V$ of $S$ such that $f_V(V)$ contains the complementary region to $K$ with end set $A$.
		\end{itemize}
	\end{enumerate}
\end{thm}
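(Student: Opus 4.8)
The statement is a biconditional, so the plan is to prove the two implications separately, in both cases organizing everything around Rosendal's criterion for coarse boundedness: a subset $A \subseteq \Map(S)$ is CB if and only if for every open identity neighborhood $V$ there is a finite set $F$ and an integer $k$ with $A \subseteq (FV)^k$ (equivalently, every continuous left-invariant length function is bounded on $A$). The natural candidate for a CB identity neighborhood is the pointwise stabilizer $\Gamma_K$ of a finite-type subsurface $K$, i.e.\ the open subgroup of classes admitting a representative that is the identity on $K$. Since these stabilizers form a neighborhood basis of the identity and CB is preserved under passing to subsets, local CB is equivalent to the existence of \emph{some} finite-type $K$ with $\Gamma_K$ coarsely bounded. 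Thus the whole problem reduces to characterizing CB-ness of $\Gamma_K$ in terms of the complementary regions of $K$.

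For the sufficiency direction (the properties of $K$ imply $\Gamma_K$ is CB) I would first fragment. A representative of $g \in \Gamma_K$ fixes $\partial K$ pointwise and hence preserves each complementary region, so $g$ factors as a commuting product of classes each supported in a single complementary region; as $K$ has finitely many such regions and CB is stable under finite products, it suffices to bound the classes supported in one region $\Sigma$. For a planar region $P$ I would use that $P$ is homeomorphic to a clopen subset of some self-similar $A$ to reduce to the self-similar case. For a self-similar region the key mechanism is an infinite swindle: self-similarity together with the small zoom condition furnishes, for each maximal end $x_A$ and each identity neighborhood, a homeomorphism $f_V$ carrying a small neighborhood $V$ of $x_A$ onto all of $\Sigma$, so one can conjugate any element supported in $\Sigma$ into $V$ and then run a displacement/commutator argument (in the style of Rosendal) to absorb it. Turning this pointwise absorption into the Rosendal bound $\Sigma \subseteq (FV)^k$ is precisely the step that must be made uniform.

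For the necessity direction I would start from an open CB neighborhood $U$, choose a finite-type $K$ with $\Gamma_K \subseteq U$ (so $\Gamma_K$ is CB), and then establish each clause by contradiction: if some complementary region failed to be self-similar, if the behavior of $\mathcal{M}$ relative to $\mathcal{M}(E)$ failed, or if the small zoom condition failed, then one can construct an unbounded orbit for $\Gamma_K$. Concretely, such a failure produces an infinite sequence of pairwise disjointly-supported, non-displaceable pieces along which a shift — a handle shift or an end shift $\eta$ — acts with unbounded translation length, equivalently a continuous unbounded length function on $\Gamma_K$, contradicting its coarse boundedness. Finitely many enlargements of $K$ then arrange that every complementary region is of the clean type required in clause~(1), namely one or infinitely many ends and zero or infinite genus.

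I expect the main obstacle to be the sufficiency direction, and specifically the extraction of a \emph{uniform} factor bound in the swindle: fragmentation and the small zoom condition each only guarantee that a given element can be absorbed, and the real work is to organize the zoom homeomorphisms $f_V$ and the commutators so that the constant $k$ in $(FV)^k$ does not depend on the element being absorbed. A secondary difficulty is the bookkeeping that reduces the planar regions $P$ and the interaction of several distinct maximal-end types back to the single self-similar model, where the compatibility conditions $\mathcal{M}(A) \subseteq \mathcal{M}(E)$ and $\mathcal{M}(E) \subseteq \bigcup_{A} \mathcal{M}(A)$ must be invoked to guarantee that no region carries a maximal-end type that cannot be zoomed away.
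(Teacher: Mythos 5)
First, a point of context: this theorem is quoted in the paper directly from Mann--Rafi \cite{mann2023large}; the paper itself contains no proof of it, so there is nothing internal to compare your argument against line by line. Measured against the actual proof in \cite{mann2023large}, your high-level architecture is the right one: local CB is indeed equivalent to CB-ness of some basic identity neighborhood $\mathcal{V}_K = \{f : f\vert_K = \mathrm{id}\}$ (these form a neighborhood basis and CB passes to subsets), sufficiency is run through Rosendal's criterion with fragmentation over complementary regions, and necessity proceeds by showing that failure of the listed properties produces unbounded continuous length functions (nondisplaceable handles, shift-type obstructions). So the skeleton is sound.

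The proposal nevertheless has a genuine gap, and interestingly it sits exactly where you flag the ``main obstacle,'' which is in fact the easy step, while the genuinely hard step is waved through. For sufficiency, no swindle or commutator argument is needed and no uniformity problem arises: given a target neighborhood $\mathcal{V}_L$, choose a neighborhood $V$ of $x_A$ disjoint from $L$ and let $f_V$ be the zoom homeomorphism with $f_V(V) \supseteq A$; then for \emph{every} class $g$ supported in the region $A$, the conjugate $f_V^{-1} g f_V$ is supported in $f_V^{-1}(A) \subseteq V \subseteq S \setminus L$, hence lies in $\mathcal{V}_L$. The conjugator $f_V$ depends only on $L$ and the region, not on $g$, so $\mathcal{V}_K \subseteq (F\,\mathcal{V}_L\,F^{-1})^k$ with $F$ the finite set of zoom maps (plus the maps pushing each $P \in \mathcal{P}$ into its ambient $A$) and $k$ the number of complementary regions --- uniformity is automatic. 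By contrast, the necessity direction, which your sketch compresses into ``such a failure produces an infinite sequence of \dots pieces along which a shift acts with unbounded translation length,'' is where essentially all of Mann--Rafi's work lies: one must show that if \emph{no} finite enlargement of $K$ satisfies the conditions, then one can actually construct a continuous, unbounded, left-invariant length function on $\mathcal{V}_K$ (via nondisplaceable subsurfaces, finite-nonzero-genus regions, or limit-type configurations of ends), and this construction is not supplied by your outline. As written, the proposal is a correct map of the proof, not a proof.
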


We will first introduce some key concepts.
\begin{defn}\label{defn:limit,stable}
	Let $S$ be an infinite type surface and $E$ be its end space.
	\begin{enumerate}
		\item $E$ is said to be \emph{limit type} if there exists a finite index subgroup $G$ of $\Map(S)$, a $G$-invariant set $X \subset E$, points $\{z_n\}_{n \geq 1} \subset E$ which are pairwise inequivalent, and a family of neighborhoods $U_n$ of $E$ such that 
		\begin{itemize}
			\item $U_{i+1} \subset U_i$ and $\cap_{i} U_i = X$,
			\item $E(z_n) \cap U_n \neq \emptyset$, $E(z_n) \cap U_1^c \neq \emptyset$, and $E(z_n) \subset (U_n \cup U_0^c)$.
		\end{itemize}
		\item $x \in E$ is \emph{stable} if for any smaller neighborhood $U' \subset U$ of $x$, there exists a homeomorphic copy of $U$ contained in $U'$.
	\end{enumerate}
\end{defn}
Roughly speaking, if there are infinitely many distinct types of ends that accumulate to a single end, then $E$ is a limit type. 
If $\Map(S)$ is locally CB and there are no limit types in $E$, then we can proceed to analyze which types of ends accumulate to the maximal end.
\begin{lem}[Lemma 6.10 in \cite{mann2023large}]
	Let $\mathcal{A}$ be the partition obtained by Theorem \ref{thm:Mann-Rafi_locCB}.
	Suppose $\Map(S)$ is locally CB and $E(S)$ is not of limit type.
	\begin{enumerate}
		\item For every pair $A, B \in \mathcal{A}$, there is a clopen set $W_{A,B}$ satisfying that $E(z) \cap W_{A, B} \neq \emptyset$ if and only if
		\[
		E(z) \cap (A - \{x_A\}) \neq \emptyset \text{\quad and \quad} E(z) \cap (B - \{x_B\}) \neq \emptyset
		\]
		\item For every $A \in \mathcal{A}$, there is a clopen set $W_A$ satisfying that 
		\[
		E(z) \cap (A - \{x_A\}) \neq \emptyset \text{ and } ^\forall B\neq A, E(z) \cap (B - \{x_B\}) = \emptyset \quad \Rightarrow \quad E(z) \cap W_A \neq \emptyset
		\]
	\end{enumerate}
\end{lem}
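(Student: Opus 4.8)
The plan is to reduce the set-theoretic conditions on the equivalence classes $E(z)$ to statements about the preorder $\preccurlyeq$, and then to build the required clopen sets by separating the relevant types, an operation that the no-limit-type hypothesis makes possible. Throughout I fix the finite-type surface $K$ and the partition $\mathcal{A} \cup \mathcal{P}$ of $E$ into clopen complementary regions furnished by Theorem \ref{thm:Mann-Rafi_locCB}, together with a choice of maximal end $x_A \in \mathcal{M}(A)$ for each $A \in \mathcal{A}$.

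First I would observe that for a type $z$, the condition $E(z) \cap (A \setminus \{x_A\}) \neq \emptyset$ is equivalent to $z \preccurlyeq x_A$ together with the requirement that $z \not\sim x_A$ unless $A$ is perfectly self-similar, in which case $\mathcal{M}(A)$ is a Cantor set and deleting the single point $x_A$ still leaves ends of type $x_A$. Indeed, the small zoom condition guarantees that every type $\preccurlyeq x_A$ already appears inside $A$, while self-similarity of $A$ forces any type appearing in $A$ to lie below $x_A$ in $\preccurlyeq$. Thus the target set for part (1) is
\[
\mathcal{Z}_{A,B} = \{\, z : z \preccurlyeq x_A,\ z \preccurlyeq x_B,\ \text{and $z$ is realized away from the maximal ends of both $A$ and $B$}\,\},
\]
and I want a clopen $W_{A,B}$ meeting $E(z)$ exactly for $z \in \mathcal{Z}_{A,B}$.

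The construction proceeds by enlarging $K$ to a finite-type subsurface whose complementary regions refine the partition enough to isolate, inside $A$, a clopen set $V_A$ that omits a neighborhood of $x_A$ but still contains a representative of every type realized non-maximally in $A$; self-similarity of $A$ (via the zoom condition) guarantees such a $V_A$ exists, since every such type recurs away from $x_A$. I would form $V_B$ similarly and take $W_{A,B}$ to be a union of complementary regions of a common refinement, chosen so that $W_{A,B} \cap E(z) \neq \emptyset$ precisely when $z$ is realized in both $V_A$ and $V_B$. The no-limit-type hypothesis enters decisively here: it rules out an infinite family of pairwise-inequivalent types accumulating onto a common limit set, which is exactly the obstruction that would prevent the shared types from being separated from the non-shared ones by a single clopen set. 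With that obstruction removed, a finite stratification of the types below $x_A$ and $x_B$ suffices, and I verify the biconditional by checking both inclusions against the defining property of $\mathcal{Z}_{A,B}$.

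For part (2) the argument is the same but one-sided, and hence easier: taking $W_A := V_A$ (the clopen subset of $A$ omitting a neighborhood of $x_A$ and seeing every type realized non-maximally in $A$) already yields $E(z) \cap W_A \neq \emptyset$ for every $z$ realized in $A$, in particular for those realized in $A$ and in no other $B$; since only the forward implication is asserted, no exactness on the complement is needed. The main obstacle in the whole argument is the simultaneous exactness demanded in part (1): one must capture every shared type while excluding every type that lives in only one region. This balance is precisely what the two hypotheses provide, self-similarity (through the zoom condition) forcing types to recur away from the maximal ends so that no shared type is missed, and the absence of limit types preventing the uncontrolled accumulation of distinct types that would otherwise leak a spurious type across any clopen boundary.
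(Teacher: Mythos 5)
A preliminary remark: the paper you are working from does not prove this statement at all --- it is imported verbatim as Lemma 6.10 of \cite{mann2023large} --- so there is no in-paper proof to compare against, and I assess your argument directly. It contains a genuine gap at its load-bearing step: the claimed existence of a clopen set $V_A \subset A$, disjoint from a neighborhood of $x_A$, meeting $E(z)$ for \emph{every} type $z$ realized in $A \setminus \{x_A\}$, justified by ``self-similarity of $A$ (via the zoom condition) guarantees such a $V_A$ exists, since every such type recurs away from $x_A$.'' This is false; self-similarity gives exactly the opposite, namely that every type realized in $A$ recurs in every neighborhood \emph{of} $x_A$ (that is what $z \preccurlyeq x_A$ says), with no recurrence away from $x_A$ implied. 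Concretely, let $S$ be the planar surface with $E(S) \cong \omega^\omega+1$. This end space is self-similar, so $\Map(S)$ is CB, hence locally CB and CB generated; since $S$ is tame, the Mann--Rafi characterization quoted in Section 2 (CB generated $\Leftrightarrow$ finite rank and not limit type) shows $E(S)$ is not of limit type. So the hypotheses of the lemma hold with $\mathcal{A}=\{A\}$, $A \cong \omega^\omega+1$. Every type $\omega^m$, $m \in \mathbb{N}$, is realized in $A \setminus \{x_A\}$, yet any clopen set disjoint from a neighborhood of $x_A$ is a compact subset of $[0,\omega^\omega)$, hence contained in some $[0,\omega^N]$ with $N$ finite, and so contains no end of type $\omega^m$ for $m>N$. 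Your $V_A$ therefore does not exist, and both parts of your argument (part (1) via $V_A, V_B$, part (2) via $W_A:=V_A$) collapse.

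The failure is not repairable by minor edits, because the correct target is strictly weaker than what you demand: one only needs a clopen set capturing the types \emph{shared} by $A$ and $B$, and it is precisely there, not at the construction of $V_A$, that the no-limit-type hypothesis does actual work. The standard argument runs by contradiction: if no clopen set away from the maximal ends captured all shared types, one extracts pairwise-inequivalent shared types $z_n$ whose $A$-representatives concentrate in a shrinking neighborhood basis $U_n$ of $x_A$; the representatives of $z_n$ in $B$ then supply the condition $E(z_n)\cap U_1^c \neq \emptyset$, and the concentration supplies $E(z_n)\subset U_n \cup U_0^c$, yielding a limit-type witness. Note that the example above also corrects your reading of the definition: $\omega^\omega+1$ has infinitely many pairwise-inequivalent types accumulating onto $x_A$ and is nevertheless \emph{not} of limit type, because those types have no representatives far from $x_A$; ``sharedness'' with a second region is what produces the far-away representatives, and this is exactly why your stronger requirement on $V_A$ is unachievable while the weaker shared-type statement is true. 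Finally, two further steps of your write-up are asserted rather than proved: the choice of $W_{A,B}$ ``so that'' it meets exactly the types realized in both $V_A$ and $V_B$ is circular (it assumes the separation the lemma asks for), and the claim that ``a finite stratification of the types below $x_A$ and $x_B$ suffices'' is unjustified, as nothing in the hypotheses bounds the number of shared types.
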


\begin{defn}[$E_{cp}(A, B)$ in \cite{mann2023large}]\label{defn:W_AB}
	The \emph{countable predecessor set} $E_{cp}(A, B)$ is a subset of $W_{A, B}$ consisting of points $z$ where $z$ is maximal in $W_{A, B}$ and $E(z)$ is countable in $W_{A,B}$.
\end{defn}
In other words, $E_{cp}(A, B)$ is the set of common immediate predecessors that accumulate to both $A$ and $B$.
Note that $E_{cp}(A, B)$  contains only finitely many types and is a subset of both $Im(x_A)$ and $Im(x_B)$.
In \cite{mann2023large}, the authors proved that if $\Map(S)$ is CB generated then (1) for any $A, B\in\mathcal{A}$, $E_{cp}(A,B)$ contains only finitely many distinct types, and (2) $W_{A, B} \cong W_{B, A}$.
We also note that the locally CB criterion implies that each partition $A\in\mathcal{A}$ is a self-similar subsurface and if $S$ has countably many ends, then each $A$ is uniquely self-similar.

\begin{defn}\label{defn:tame}
	An end space $E$ is \emph{tame} if, for every $A$ in $\mathcal{A}$, the maximal end $x_A$ of $A$ has a stable neighborhood, and for any $A, B \in \mathcal{A}$, every maximal point in $W_{A,B}$ has a stable neighborhood.
\end{defn}
Mann and Rafi proved that if $S$ is tame and $\Map(S)$ is locally CB, then $\Map(S)$ is CB generated if and only if $E$ is \emph{finite rank} and not of \emph{limit type}.
We will not discuss the finite rank in detail here.
Simply put, if $E = \omega^k + 1$, then $S$ is of finite rank if and only if $k$ is a successor ordinal.

If $S$ is tame and has CB generated $\Map(S)$, we can decompose the surface more simply than in Theorem \ref{thm:Mann-Rafi_locCB}.
\begin{prop}[Proposition 5.4 in \cite{mann2023large}]\label{prop:prop5.4_mannrafi}
	If $\Map(S)$ is locally CB, then there exists a partition $\mathcal{A}$ and $L$ of $S$ satisfying that
	\begin{enumerate}
		\item $|\mathcal{A}| < \infty$.
		\item $E(S) = \bigcup_{A \in \mathcal{A}}E(A)$.
		\item Each $A \in \mathcal{A}$ is self-similar.
		\item $L = S - \bigcup_{A\in\mathcal{A}} A$ is a sphere with $|\mathcal{A}|$ boundaries.
	\end{enumerate}
	Moreover, if we further assume that $S$ is tame and $\Map(S)$ is CB generated, then the CB generating set of $\Map(S)$ is the union of the following.
	\begin{itemize}
		\item The identity neighborhood $\mathcal{V}_L := \{ f~|~ f\vert_L  = id \}$,
		\item Generating sets of $\Map(L)$,
		\item Generalized shifts $\eta_{A, B}$ and $\eta_{A, B, z}$ for $A, B \in \mathcal{A}$ and $z \in E_{cp}(A, B)$,
		\item Handle shifts,
		\item Half twists $g_{A,B}$ when $x_A \sim x_B$, where $x_A, x_B$ is a unique type of maximal ends of $A$ and $B$, respectively.
	\end{itemize}
\end{prop}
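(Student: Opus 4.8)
The plan is to bootstrap from the locally CB criterion, Theorem \ref{thm:Mann-Rafi_locCB}, which already hands us a finite type subsurface $K$ whose complementary regions are indexed by finite sets $\mathcal{A}$ and $\mathcal{P}$, with each $A \in \mathcal{A}$ self-similar and each $P \in \mathcal{P}$ homeomorphic to a clopen subset of some $A$. The first four conclusions are a purely topological simplification of this picture: I want to absorb the auxiliary pieces $\mathcal{P}$ and the genus of $K$ into the self-similar pieces, so that what remains of the core is a genus-zero sphere with exactly $|\mathcal{A}|$ boundary circles. The ``Moreover'' clause is then an independent statement identifying an explicit CB generating set, which I would prove in the two standard sub-steps: verifying that the listed set is coarsely bounded, and verifying that it generates.

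For the topological reduction I would argue via the classification of surfaces by the data $(g, E(S), E^G(S))$. The key lemma is an absorption property of self-similar surfaces: if $A$ is self-similar and $P$ is homeomorphic to a clopen subset of $A$, then the surface obtained by tubing $P$ to $A$ through a handle-free annulus in $K$ is again homeomorphic to $A$; likewise $A$ absorbs any finite amount of genus adjacent to it whenever $A$ already carries an end in $E^G(S)$. Both follow from self-similarity, since a finite partition of $E(A)$ has a block carrying a homeomorphic copy of $E(A)$, which lets one shift the extra clopen set or the extra genus off to infinity inside $A$ without changing its homeomorphism type (the usual self-sum argument behind standard examples such as the Loch Ness monster surface). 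Applying this to each $P\in\mathcal{P}$ and to the genus of $K$ enlarges each $A$ to some $\widehat A \cong A$ and strips $K$ down to a planar surface. Because the complement of $K$ already separated $E(S)$ into clopen blocks, the enlarged pieces $\widehat A$ still partition $E(S)$ (conclusion (2)), each $\widehat A$ is self-similar (conclusion (3)), $|\mathcal{A}|<\infty$ is immediate from Theorem \ref{thm:Mann-Rafi_locCB} (conclusion (1)), and the residual core $L$ is a sphere with one boundary circle per piece, hence $|\mathcal{A}|$ boundaries (conclusion (4)).

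For the ``Moreover'' clause I would first check coarse boundedness. The set is a finite union, so it suffices that each piece is CB. The subgroup $\mathcal{V}_L = \{f : f|_L = \mathrm{id}\}$ is CB precisely because $S$ is tame with $\Map(S)$ CB generated: this is where the finite-rank and non-limit-type hypotheses enter, guaranteeing that fixing the finite core $L$ leaves only boundedly many independent self-similar factors, each CB by the small zoom condition. The generators of $\Map(L)$ form a finite, hence CB, set since $L$ is a finite type surface with boundary, and each generalized shift $\eta_{A,B}$, $\eta_{A,B,z}$, each handle shift, and each half twist $g_{A,B}$ is a single element, so their totality is finite and CB. A finite union of CB sets is CB.

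The generation claim is the main obstacle, and I would prove it by a normalization-then-fragmentation argument. Given $f \in \Map(S)$, its induced action on the finite set $\mathcal{M}(S)/\!\sim$ of types of maximal ends and on the common predecessor sets $E_{cp}(A,B)$ can be realized by a product of half twists $g_{A,B}$ (permuting equivalent maximal ends) together with generalized and handle shifts (undoing the displacement of immediate predecessors and of genus); after composing $f$ with the inverse of such a product, I may assume $f$ fixes every maximal end and acts trivially on the predecessor structure. Using the small zoom condition and the self-similarity of each $\widehat A$, the remaining map can then be pushed, up to the topological closure, into a homeomorphism supported away from $L$ together with an element of $\Map(L)$, i.e.\ into the group generated by $\mathcal{V}_L$ and $\Map(L)$. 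The subtle point, and the reason the statement is about generation in the topological group rather than the abstract one, is that undoing the displacement of the \emph{infinitely many} immediate predecessors accumulating to a maximal end is only achievable as a limit; controlling that limit is exactly what the non-limit-type and finite-rank conditions packaged into ``CB generated'' are there to ensure.
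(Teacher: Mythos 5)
A preliminary remark: the paper offers no proof of this proposition at all --- it is imported verbatim as Proposition 5.4 of Mann--Rafi \cite{mann2023large} --- so your attempt can only be measured against their argument, whose broad shape (absorb the auxiliary pieces into the self-similar ones, then exhibit an identity neighborhood plus finitely many shifts and twists as generators) you do reproduce. Your first genuine gap is in the absorption lemma. As stated --- ``if $A$ is self-similar and $P$ is homeomorphic to a clopen subset of $A$, then tubing $P$ to $A$ yields a surface homeomorphic to $A$'' --- it is false: $E(A)$ is a clopen subset of itself, and if $E(A) \cong E(P) \cong \omega+1$ then $E(A)\sqcup E(P) \cong \omega\cdot 2+1 \not\cong E(A)$. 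The lemma holds only when $E(P)$ contains no end of maximal type, and that restriction must be extracted from the condition $\mathcal{M}(E) \subset \bigcup_{A\in\mathcal{A}}\mathcal{M}(A)$ in Theorem \ref{thm:Mann-Rafi_locCB}; this step is missing. Worse, your genus absorption is explicitly conditional on $A$ carrying an end of $E^G(S)$, and you never treat the complementary case: a surface of finite positive genus, such as a genus-one flute, has the genus sitting in $K$, satisfies the locally CB criterion exactly as the ordinary flute does (the criterion is insensitive to finite genus placed in $K$), yet no piece $A$ can absorb the handle without changing its homeomorphism type. The conclusion survives because the paper's notion of self-similarity depends only on $E(A)$, so the genus may be pushed into an $A$-piece without affecting condition (3); but that is a different, easier justification than the one you give, and your sketch as written fails on these surfaces.

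The second gap is in the ``Moreover'' clause. CB generation means abstract generation: every mapping class must be a finite product of elements of the listed set. Your normalization-then-fragmentation argument, by your own description, only places an arbitrary $f$ in the closure of the generated subgroup (``up to the topological closure,'' ``only achievable as a limit''), and your closing claim --- that this is why the statement concerns generation in the topological group rather than the abstract one --- misreads the statement, which asserts honest generation. The repair exists but you never make it: the generated subgroup contains the open set $\mathcal{V}_L$, hence is an open subgroup, hence closed, so density would indeed force equality with $\Map(S)$. Without that one-line observation (or, as in Mann--Rafi, an exact correction argument showing that finitely many shifts and twists reduce $f$ to an element of $\mathcal{V}_L\cdot\Map(L)$ on the nose), your proof does not reach the stated conclusion. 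A smaller misattribution in the same part: $\mathcal{V}_L$ is a CB identity neighborhood whenever $\Map(S)$ is locally CB --- this is part of the locally CB criterion itself --- not a consequence of tameness and CB generation; those extra hypotheses (finite rank, no limit type) are what make the finitely-many-shifts correction in the generation step possible, which is where they genuinely enter.
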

We will use this fact in Section \ref{sec:n_of_norm_gen} to estimate the bounds of the minimum number of topological normal generators.

\section{Obstruction}\label{sec:obstruction}
We introduce the obstruction of being (topologically) normally generated group.
\begin{lem}\label{lem:vlamis_obs}
	Let $G$ be a topological group and $H$ be a non-cyclic abelian group.
	If there exists a continuous surjective homomorphism from $G$ to $H$, then $G$ cannot be topologically normally generated.
\end{lem}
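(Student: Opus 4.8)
The plan is to argue by contradiction, exploiting the single feature of $H$ that matters here: a homomorphism into an abelian group is blind to conjugation. So suppose, toward a contradiction, that $G$ is topologically normally generated, and fix $g \in G$ whose normal closure $N := \langle\langle g \rangle\rangle$ is dense in $G$. Let $\phi \colon G \to H$ denote the given continuous surjective homomorphism, and recall that $H$ is abelian and (in the applications) carries the discrete topology, being assembled from copies of $\mathbb{Z}$ via the first cohomology computation.

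First I would identify the image $\phi(N)$. Since $N$ is generated by the conjugates $xgx^{-1}$ ($x\in G$), every element of $N$ is a finite product of such conjugates and their inverses. Because $H$ is abelian, $\phi(xgx^{-1}) = \phi(x)\phi(g)\phi(x)^{-1} = \phi(g)$, so $\phi$ carries each generator of $N$ to $\phi(g)$; hence
\[
\phi(N) = \langle \phi(g)\rangle \le H
\]
is cyclic. The point is that passing to an abelian quotient collapses the entire normal closure of $g$ onto the cyclic subgroup generated by the single element $\phi(g)$.

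Next I would transport the density of $N$ across $\phi$. By continuity, $\phi(\overline{N}) \subseteq \overline{\phi(N)}$; since $\overline{N} = G$ and $\phi$ is surjective, this gives $H = \phi(G) \subseteq \overline{\langle\phi(g)\rangle}$, so $\langle\phi(g)\rangle$ is dense in $H$. Invoking that $H$ is discrete, every subgroup of $H$ is closed, so density forces $\langle\phi(g)\rangle = H$, i.e.\ $H$ is cyclic. This contradicts the hypothesis that $H$ is non-cyclic, and the contradiction shows no such $g$ can exist, so $G$ is not topologically normally generated.

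The argument is otherwise formal, and I expect the only genuinely delicate point to be the last step, where a dense cyclic subgroup must be shown to exhaust $H$. This is automatic for discrete $H$, which is exactly the setting supplied by the cohomological construction; I would therefore make the discreteness of the target explicit, since for a non-discrete abelian target the statement can fail (a dense one-parameter winding inside a torus gives a dense cyclic subgroup of a non-cyclic group, so density alone would not yield the contradiction).
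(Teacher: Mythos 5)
Your proof is correct and, in expanded form, is exactly the paper's argument: the paper's entire proof is the one-line observation that the image of a topologically normally generated group under a homomorphism to an abelian group must be cyclic, which is precisely your computation $\phi(\langle\langle g \rangle\rangle)=\langle \phi(g)\rangle$ followed by the density transfer $H=\phi(\overline{\langle\langle g \rangle\rangle})\subseteq\overline{\langle\phi(g)\rangle}$. Your closing caveat, however, is more than a formality, and is worth recording: as literally stated, with $H$ an arbitrary topological abelian group, the lemma is false. Take $G=H=\mathbb{R}^2/\mathbb{Z}^2$ and $\phi$ the identity; if $g=(\alpha,\beta)$ with $1,\alpha,\beta$ rationally independent, then the normal closure of $g$ (which, $G$ being abelian, is just $\langle g\rangle$) is dense by Kronecker's theorem, so $G$ is topologically normally generated even though $H$ is non-cyclic abelian. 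The lemma is valid exactly when a dense cyclic subgroup of $H$ must equal $H$, e.g.\ when $H$ is discrete; this hypothesis is implicit in the paper (where $H$ is always $\mathbb{Z}^2$, $(\mathbb{Z}/2\mathbb{Z})\times\mathbb{Z}$, $(\mathbb{Z}/2\mathbb{Z})^2$, or a finite direct sum of such, carrying the discrete topology), but neither the statement nor the paper's proof makes it explicit. Your version, with discreteness stated, is the one that should appear.
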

\begin{proof}
	If $G$ is topologically normally generated, then the image under homomorphism to the abelian group must be cyclic.
\end{proof}

For example, In \cite{aramayona2020firstcohomology} the $\PMap(S)$ is a semi-direct product of $\overline{\Map_c(S)}$ with handle shifts.
Therefore if the surface has at least $3$ ends accumulated by genus then $\PMap(S)$ is not topologically normally generated. 
From now on, the main strategy to show that the mapping class group is not (topologically) normally generated is to construct the surjective map to the noncyclic abelian group.

Through this obstruction, we can identify infinite type surfaces whose mapping class groups are not topologically normally generated.
We first note that there is a short exact sequence
\[
	1 \to \PMap(S) \xrightarrow{\iota} \Map(S) \xrightarrow{\pi} \Homeo(E(S), E^G(S)) \to 1
\]
where both $\iota, \pi$ are continuous maps.
In fact, for any $\Map(S)$-invariant set $F \subset E(S)$ the forgetful map $\Map(S) \to \Homeo(F)$ is continuous.

\begin{prop}\label{thm:main_obstruction}
	Suppose $S$ is an infinite type surface with countable $E(S)$. 
	Let $x_1,\cdots ,x_k$ be the distinct types of maximal ends of $S$ and $n_i = |E(x_i)|$. 
	If $\Map(S)$ is topologically normally generated, then $n_1 \geq n_2 = \cdots =n_k =1$ after reindexing.
\end{prop}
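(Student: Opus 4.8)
The plan is to prove the contrapositive using the obstruction of Lemma \ref{lem:vlamis_obs}: I will show that if two distinct types of maximal ends each have more than one element, then $\Map(S)$ admits a continuous surjection onto the non-cyclic abelian group $\Z/2\Z \times \Z/2\Z$, and hence is not topologically normally generated. First I would record that every $n_i$ is a finite integer: since $E(S)$ is countable it contains no Cantor set, so by the finite-or-Cantor dichotomy of Proposition \ref{prop:mannrafi_prop4.7} each class $E(x_i)$ is finite. Thus it suffices, arguing by contradiction, to assume that two distinct indices $i \neq j$ have $n_i, n_j \geq 2$ and to derive that $\Map(S)$ is not topologically normally generated.

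The engine of the argument is a family of continuous characters. For each maximal type $x_i$, the set $E(x_i)$ is $\Map(S)$-invariant, because every $f \in \Map(S)$ induces a homeomorphism of $E(S)$ preserving the preorder $\preccurlyeq$, hence the equivalence $\sim$. Being a finite subset of the metrizable space $E(S)$, the set $E(x_i)$ carries the discrete topology, so $\Homeo(E(x_i)) = \mathrm{Sym}(E(x_i)) \cong S_{n_i}$ is a finite discrete group. By the continuity of the forgetful map on $\Map(S)$-invariant sets noted in the excerpt, the induced homomorphism $\sigma_i \co \Map(S) \to S_{n_i}$ is continuous, and composing with the sign character yields a continuous homomorphism $\phi_i := \sgn \circ \sigma_i \co \Map(S) \to \Z/2\Z$.

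It then remains to check that $(\phi_i, \phi_j) \co \Map(S) \to \Z/2\Z \times \Z/2\Z$ is onto. Fix two ends $e, e' \in E(x_i)$. Since $e \sim e'$, the change-of-coordinates principle produces $f \in \Map(S)$, supported in a neighborhood of $\{e,e'\}$ that is disjoint from a neighborhood of $E(x_j)$, which interchanges $e$ and $e'$ and fixes every other end. Then $\sigma_i(f)$ is the transposition $(e\,e')$, so $\phi_i(f) = 1$, while $f$ acts trivially on $E(x_j)$, so $\phi_j(f) = 0$; hence $(1,0)$ lies in the image. Running the same construction inside $E(x_j)$ produces $(0,1)$, so $(\phi_i,\phi_j)$ surjects onto $\Z/2\Z \times \Z/2\Z$. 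As this group is abelian and non-cyclic, Lemma \ref{lem:vlamis_obs} shows $\Map(S)$ is not topologically normally generated, completing the contrapositive.

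I expect the main obstacle to be the independence step: verifying that a transposition of two $\sim$-equivalent maximal ends can be realized by a mapping class whose support is disjoint from the remaining maximal types. This rests on choosing pairwise disjoint neighborhoods of the finitely many maximal ends whose germs for $\sim$-equivalent ends are genuinely homeomorphic, so that the swap homeomorphism exists and leaves $E(x_j)$ pointwise fixed; establishing this homeomorphism type of neighborhoods for same-type maximal ends in a countable (uniquely self-similar) end space is the crux.
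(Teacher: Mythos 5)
Your proposal is correct and follows essentially the same route as the paper: the paper also uses the permutation action on the (setwise invariant, finite) sets of same-type maximal ends to get a surjection $\Map(S) \twoheadrightarrow S_{n_1}\times\cdots\times S_{n_k}$, abelianizes to a product of $\mathbb{Z}/2\mathbb{Z}$'s (one per $n_i>1$), and applies Lemma \ref{lem:vlamis_obs}. Your version is just the contrapositive phrasing of this, with the details the paper leaves implicit (finiteness of each $E(x_i)$ via Proposition \ref{prop:mannrafi_prop4.7}, continuity of the forgetful maps, and realizing a transposition of two equivalent maximal ends by a mapping class supported away from the other types, i.e.\ the half twists of Proposition \ref{prop:prop5.4_mannrafi}) written out explicitly.
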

\begin{proof}
	Since the maximal ends must be setwise preserved, we consider only how the mapping classes permute these ends, disregarding other aspects of the maps.
	This leads to the following surjective homomorphism.
	\[
		\Map(S) \twoheadrightarrow S_{n_1}\times \cdots \times S_{n_k}
	\]
	After abelianizing the image, we obtain a product of $\mathbb{Z}/2\mathbb{Z}$, which only occurs for $n_i > 1$.
	Hence the only one maximal end type could have more than $1$ distinct maximal ends of that type.
\end{proof}

\begin{rmk}
We note that Theorem \ref{thm:main_obstruction} is a negative criterion, and its converse does not hold in general.
In Figure \ref{fig:Mann-rafi_ex}, there are $3$ distinct maximal ends, which we label as follows. 
$A$ (the rightmost maximal end accumulated by punctureds only),
$B$ (the leftmost end accumulated by both punctures and genus), and $C$ the bottom maximal end accumulated by genus only).
There are two distinct shifts in the surface, one is the puncture shift $\eta_{A, B, p}$ (here, $p$ denotes the puncture) and the other is a handle shift $h$ between $B, C$. 
Let $c$ be the boundary of a small neighborhood of $B$.
Suppose $f \in \Map(S)$.
We choose a seperating closed curve $\gamma$, which separates the maximal end $B$ from the others so that both $\gamma$ and $f(\gamma)$ do not intersect with $c$. 
Then $\gamma$ and $c$ (and $f(\gamma)$ and $c$ also) cobound a finite type subsurface $\Sigma_\gamma$ ($\Sigma_{f(\gamma)}$, respectively).  
We can construct the homomorphism from $\Map(S)$ to $\mathbb{Z}\times \mathbb{Z}$ defined as follows.
\[
	f \mapsto ( \text{ \# punctures in } \Sigma_\gamma -  \text{\# punctures in } \Sigma_{f(\gamma)}, \text{\# genus in } \Sigma_\gamma -  \text{\# genus in } \Sigma_{f(\gamma)})
\]
Since $\eta_{A, B, p}^n \circ h^m$ has a value of $(n,m)$, the map is surjective.
By the criterion, the surface in Figure \ref{fig:Mann-rafi_ex} has a mapping class group that is not topologically normally generated.
\end{rmk}
\begin{figure}[h]
	\centering
	\includegraphics[width=.5\textwidth]{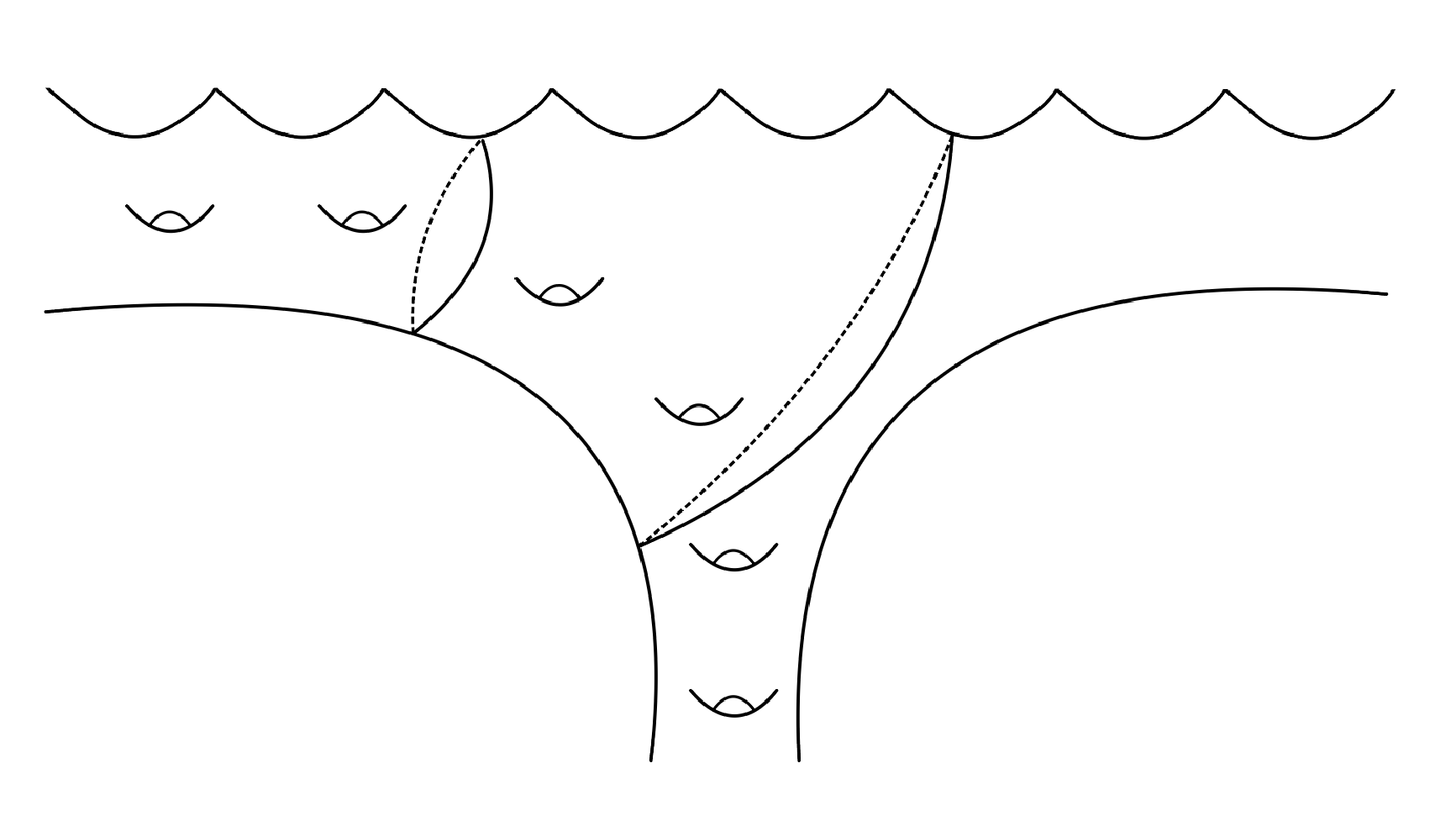}
	\caption{Each maximal end is distinct from the others. A separating curve on the left is mapped to the right one by handle shift twice and puncture shift three times, both shifts toward to the right. If we assign the rightward direction as positive, this mapping corresponds to the value $(3,2)$.}
	\label{fig:Mann-rafi_ex}
\end{figure}

\section{Various shifts}\label{sec:shifts}
In this section we introduce the various ways in which the shift maps on immediate predecessors can arise.
We will show how much they differ from shifts.
From now on, the surface $S$ we are dealing with is a tame, infinite type surface with \emph{countable end space} with CB generated $\Map(S)$.
We begin with the definition of biinfinite strips.
\begin{defn}[Biinfinite strip]\label{defn:biinfinite_strip}
	The biinfinite strip $\mathcal{B}$ is an infinite type surface homeomorphic to the surface defined as follows.
	\[
	\mathcal{B} \cong \mathbb{R} \times [-1, 1] - \left(\bigcup_{m\in\mathbb{Z}}D_{\frac{1}{2}}(m, 0)\right)
	\]
	where $D_{r}(p, q)$ is an open disk of radius $r$ centered at $(p,q)$.
\end{defn}
We label $\partial_m$ to each boundary of the deleted disk centered at $(m, 0)$. 
We also label $\partial_{\pm}$ as long boundaries, $\mathbb{R}\times\{\pm1\}$.

\begin{figure}[h]
	\centering
	\includegraphics[width=.5\textwidth]{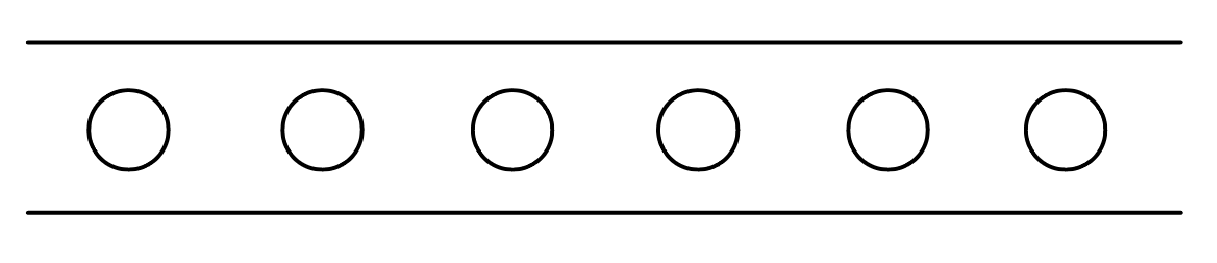}
	\caption{Biinfinite strip $\mathcal{B}$}
	\label{fig:biinfinite_strip}
\end{figure}
\begin{figure}[h]
	\centering
	\includegraphics[width=.8\textwidth]{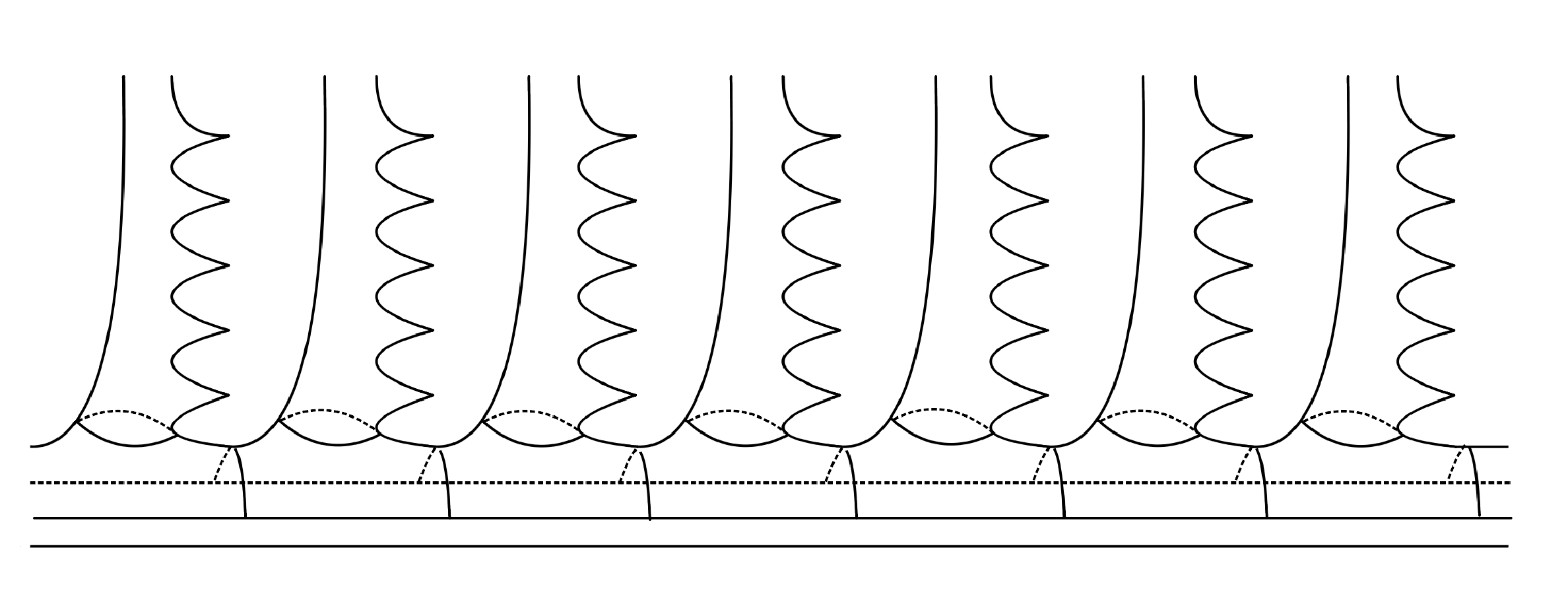}
	\caption{Embedded biinfinite strip $\mathcal{B}_z(A, B)$, where $z \sim \omega+1$ and both $A, B$ are $\omega^2+1$, and $(E(S), E^G(S)) = (\omega^2\cdot 2 + 1, \emptyset)$.}
	\label{fig:biinfinite_strip_embedded}
\end{figure}

There are many ways to embed $\mathcal{B}$ into $S$. 
We choose one way for given $\eta_{A, B, z}$ with $z \in E_{cp}(A,B)$ as follows.
\begin{defn}[biinfinite strip $\mathcal{B}_z(A,B)$]\label{defn:biinfinite_strip_for_z}
	Let $A,B$ be $2$ distinct maximal ends of $S$ and suppose $E_{cp}(A, B)$ is nonempty.
	Fix $z\in E_{cp}(A,B)$.
	Then there is an orientation preserving embedding $\iota$ of a biinfinite strip $\mathcal{B}$ into $S$ which satisfies the following.
	\begin{enumerate}
		\item Let $\Sigma_m$ be the subsurface cut by $\partial_m$ which does not contain $\iota(\mathcal{B})$. Then the only maximal end of $\Sigma_m$ is of type $E(z)$ for all $m \in \mathbb{Z}$.
		\item $\iota(t, \pm1)$ approaches to $A$ as $t\to -\infty$ and $\iota(t, \pm1)$ approaches to $B$ as $t \to \infty$.
		\item The support of $\eta_{A, B, z}$ contains the image of $\mathcal{B}$ and $\eta_{A, B, z}$ sends the maximal end in $\Sigma_m$ to the maximal end in $\Sigma_{m+1}$.
	\end{enumerate}	
	We denote the image of $\mathcal{B}$ union with $\Sigma_m$'s as $\mathcal{B}_z(A,B)$.
	We also label the unique maximal end of a subsurface $\Sigma_m$ to $z_m$.
\end{defn}
We sometimes abuse the notation $\mathcal{B}_z(A,B)$ by writing it simply as $\mathcal{B}_z$ if there is no confusion.
Note that $\mathcal{B}_z(A,B)$ may not be unique.
For example, if $z$ is a type of the unique maximal end of the infinite flute surface (whose end space is $\omega+1$), then there are uncountably many different embeddings of $\mathcal{B}$ in $S$ by selecting which punctures are excluded from the image of $\mathcal{B}$.
Nevertheless, all such embeddings share the following property.

\begin{lem}\label{lem:gamma_intersects_B}
	Let $\gamma$ be an oriented simple closed curve on $S$ which separates $A$ and $B$.
	Then $\gamma$ essentially intersects $\mathcal{B}_z(A,B)$ for any $z \in E_{cp}(A, B)$. 
	In particular, $\gamma$ separates $\mathcal{B}_z(A,B)$.
\end{lem}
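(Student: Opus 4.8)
The plan is to exhibit a properly embedded line inside $\mathcal{B}_z(A,B)$ that joins the two ends $A$ and $B$, and then to use the fact that a simple closed curve separating the ends $A$ and $B$ is necessarily a separating curve of $S$ whose two complementary regions contain $A$ and $B$ respectively. Concretely, using property (2) of Definition \ref{defn:biinfinite_strip_for_z}, I would pick the properly embedded line $\ell := \iota(\mathbb{R}\times\{3/4\})$ in the interior of $\mathcal{B}_z(A,B)$; since $3/4 > 1/2$ it misses every removed disk $D_{1/2}(m,0)$, and its two ends converge to $A$ (as $t\to-\infty$) and to $B$ (as $t\to+\infty$). Thus $\mathcal{B}_z(A,B)$ is a connected subsurface accumulating onto both $A$ and $B$, which is the only feature of the embedding I will need.

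Next I would record the elementary but crucial observation that any simple closed curve $\gamma$ separating the ends $A$ and $B$ must be separating in $S$, since a non-separating curve has connected complement and cannot separate any two ends. As $S$ is orientable, $\gamma$ is two-sided and $S\setminus\gamma = S_A \sqcup S_B$ with $A\in S_A$ and $B\in S_B$. Because $\gamma$ is compact and $\ell$ is proper, the intersection $\ell\cap\gamma$ is compact, so the tail of $\ell$ toward $-\infty$ eventually lies in a neighborhood of $A$ contained in $S_A$, and the tail toward $+\infty$ eventually lies in a neighborhood of $B$ contained in $S_B$. Hence $\mathcal{B}_z(A,B)$ meets both $S_A$ and $S_B$; since these are distinct components of $S\setminus\gamma$, the set $\mathcal{B}_z(A,B)\setminus\gamma$ is disconnected with the $A$-end and the $B$-end in different components, which is precisely the assertion that $\gamma$ separates $\mathcal{B}_z(A,B)$.

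For essentiality I would argue by contradiction: if $\gamma$ could be isotoped off $\mathcal{B}_z(A,B)$, then a representative $\gamma'$ isotopic to $\gamma$ would be disjoint from the connected subsurface $\mathcal{B}_z(A,B)$, forcing all of $\mathcal{B}_z(A,B)$, and in particular both ends $A$ and $B$, into a single complementary component of $\gamma'$. But an isotopy moves points only within a compact set, so $\gamma'$ separates exactly the same pairs of ends as $\gamma$; in particular $\gamma'$ still separates $A$ from $B$, a contradiction. Therefore no curve isotopic to $\gamma$ is disjoint from $\mathcal{B}_z(A,B)$, i.e.\ $\gamma$ essentially intersects $\mathcal{B}_z(A,B)$.

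The hard part will be the bookkeeping that ties the limiting behavior of $\ell$ to the global complementary regions $S_A$ and $S_B$: one must invoke properness of $\ell$ together with compactness of $\gamma$ to guarantee that the two tails of $\ell$ eventually stabilize inside $S_A$ and $S_B$ rather than oscillating across $\gamma$, and one must use the standard fact that isotopic simple closed curves separate the same set of ends. Everything else is formal once the connecting line $\ell$ is in hand.
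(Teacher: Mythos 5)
Your proof is correct and takes essentially the same route as the paper: the paper's own (two-line) argument is exactly the contradiction that if $\gamma$ could be made disjoint from $\mathcal{B}_z(A,B)$, then this connected subsurface accumulating to both $A$ and $B$ would lie in a single complementary component of $\gamma$, contradicting that $\gamma$ separates $A$ from $B$. Your extra bookkeeping (the explicit line $\ell$, properness of its tails, and isotopy invariance of end separation) simply makes explicit the details the paper leaves implicit.
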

\begin{proof}
	Suppose not.
	Then $\mathcal{B}_z(A, B)$ is embedded in $S - \gamma$, which implies $A, B$ are in the same partition produced by $\gamma$.
	Obviously, it does not depend on the choice of $z \in E_{cp}(A, B)$.
\end{proof}

Let $S$ be an infinite type surface and suppose $S$ has at least $2$ distinct maximal ends $A, B$ which share an immediate predecessor of type $z$.
There is a $z$-shift map $\eta_{A, B, z}$ from $A$ to $B$, and $\eta_{A, B, z}$ gives a $\mathbb{Z}$-indexing on $z$-type ends between $A, B$.
In particular, $\eta_{A, B, z}(z_k) = z_{k+1}$ if we restrict $\eta_{A, B, z}$ to $E(S)$.

We index each $z$ so that $z_k$ accumulates to one maximal end $A$ as $k$ goes to $-\infty$,
The other maximal end $B$ is accumulated by $z_k$'s with positive indices.

Choose any strictly monotone bi-infinite sequence $\{a_i\}_{i \in \mathbb{Z}}$.
We can consider the shift map that shifts every $z_{a_i}$ to $z_{a_{i+1}}$, otherwise fixed.

\begin{defn}[$3$ shift maps]\label{defn:shift_maps}
	Let $\{a_i\}_{i \in \mathbb{Z}}$ be a strictly monotone sequence in $\mathbb{Z}$ and let $\eta$ be a shift map defined by $\{a_i\}$.
	\begin{itemize}
		\item $\eta$ is \emph{full} if $a_i = i$.
		\item $\eta$ is \emph{permissible}\footnote{The word \emph{permissible} came from \cite{abbott2021infinite-type}.} if there are nonzero integers $n_1 > n_2$ so that the set $\{a_i\}$ is a proper subset of $\mathbb{Z}$ and contains $(-\infty, n_2] \cup [n_1, \infty)$.
		\item $\eta$ is \emph{spontaneous} otherwise.
	\end{itemize}
\end{defn}

\begin{prop}\label{prop:not_full}
	Suppose $\eta$ is not full.
	Then there is a surface homeomorphism $T$ such that $T \circ \eta$ is full.
\end{prop}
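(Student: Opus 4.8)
The plan is to correct a non-full shift into the canonical full shift by post-composing with a single homeomorphism, built by exploiting that all the subsurfaces $\Sigma_m$ in the strip share one homeomorphism type. Write $\eta_0 := \eta_{A,B,z}$ for the full shift, i.e.\ the shift associated to the sequence $a_i = i$, which carries $\Sigma_m$ to $\Sigma_{m+1}$ for every $m \in \mathbb{Z}$ (Definition \ref{defn:biinfinite_strip_for_z}). Both $\eta$ and $\eta_0$ are orientation-preserving homeomorphisms of $S$ supported in a neighborhood of $\mathcal{B}_z(A,B)$ and fixing the two strip ends $A$ and $B$; in particular $\eta$ is invertible with $\eta^{-1}$ again supported there. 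I would then simply set
\[
	T := \eta_0 \circ \eta^{-1},
\]
so that $T$ is a composition of surface homeomorphisms, hence a surface homeomorphism, and by construction $T \circ \eta = \eta_0$ is full. At the level of homeomorphisms this is the whole argument; the content lies in checking that $T$ is an honest proper homeomorphism supported inside the strip and in describing it concretely.

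For the concrete description I would track the index permutation that $T$ induces on the $z$-type ends. Since $\eta$ induces $\sigma_\eta(a_i) = a_{i+1}$ and $\sigma_\eta(k)=k$ for $k\notin\{a_i\}$, while $\eta_0$ induces $m \mapsto m+1$, the map $T$ induces $m \mapsto \sigma_\eta^{-1}(m)+1$. Because every $\Sigma_m$ is a copy of the same surface (unique maximal end of type $E(z)$), any such reshuffling of the $\{\Sigma_m\}$ is realized by a homeomorphism assembled from half-twists that interchange neighboring copies along the strip. In the permissible case the sequence contains $(-\infty,n_2]\cup[n_1,\infty)$, so $\eta$ and $\eta_0$ already agree on all indices outside a finite window; hence $T$ is compactly supported and is a product of finitely many such half-twists.

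The main obstacle is the spontaneous case, where infinitely many indices are omitted on at least one side and the correcting permutation $m \mapsto \sigma_\eta^{-1}(m)+1$ displaces infinitely many $\Sigma_m$. Here one must guarantee that the resulting map is proper and extends continuously over the accumulation ends $A$ and $B$, rather than being a merely set-theoretic bijection of the index set. I would resolve this in two complementary ways: first, the clean definition $T=\eta_0\circ\eta^{-1}$ already yields a genuine homeomorphism, since properness and continuity at $A$ and $B$ are inherited from $\eta_0$ and $\eta^{-1}$; second, for an explicit model I would organize the correcting permutation as a \emph{locally finite} product of half-twists on pairwise-disjoint neighborhoods of the relevant $\Sigma_m$, so that their supports shrink toward $A$ and $B$ and the infinite composition converges to a homeomorphism fixing $A$ and $B$. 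Either way $T\circ\eta=\eta_0$ is full, which completes the proof.
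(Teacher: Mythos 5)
Your proposal is correct, but it is really two arguments in one, and they compare differently against the paper. Your primary route, $T := \eta_0 \circ \eta^{-1}$ where $\eta_0 = \eta_{A,B,z}$ is the full shift, does prove the proposition exactly as stated: $T$ is a homeomorphism and $T \circ \eta = \eta_0$ is full. The paper does not argue this way, and for a reason worth noting: that argument is formally vacuous (for any homeomorphisms $f,g$ whatsoever one can write $g = (g\circ f^{-1})\circ f$), and the proposition is only useful downstream because of the particular form of $T$. In Proposition \ref{prop:swindle} the paper needs $T$ to be finitely bounded (permissible case) or a limit of finitely bounded maps (spontaneous case), and the remark after Proposition \ref{prop:properties_of_Phi} needs $T$ to be a product of half twists so that $\Phi_z(T)=0$; the bare composition $\eta_0\circ\eta^{-1}$ yields these properties only after one checks that representatives of $\eta$ and $\eta_0$ agree away from the problematic indices, which is exactly the analysis you relegate to your ``explicit model.'' That explicit model, in turn, is precisely the paper's proof: finitely many half twists $T_{b_1},\dots,T_{b_k}$ at the missing indices in the permissible case, and in the spontaneous case an infinite product of half-twist blocks (the paper's $T_{I_k}$, one finite cyclic block per omitted interval $I_k$) whose supports are pairwise disjoint, hence a well-defined locally finite composition. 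Two small inaccuracies in your wording: in the permissible case $T$ is not ``compactly supported''---the displaced subsurfaces $\Sigma_m$ are themselves of infinite type---the correct notion is \emph{finitely bounded} in the sense of Definition \ref{defn:finite_boundness}; and in the spontaneous case the omitted indices may accumulate at only one of the two ends, so the block supports need not shrink toward both $A$ and $B$---what makes the infinite composition a homeomorphism is the pairwise disjointness (local finiteness) of the blocks, not symmetry of the accumulation.
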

\begin{proof}
	Suppose $\eta$ is permissible.
	Then there are finitely many integers in $\mathbb{Z} - \{a_i\}_{i \in \mathbb{Z}}$.
	Let $b_1 < \cdots < b_k$ be such integers.
	Let $T_i$ be a half twist which switches $z_i$ and $z_{i+1}$.
	Then $T_{b_1}\circ \cdots \circ T_{b_k} \circ \eta$ is full. 
	
	For the spontaneous case, $a_i$ is not bounded by definition.
	Thus there are countably many disjoint intervals $I_k$, all of which are bounded and $\{a_i\} = \mathbb{Z} - \cup_{k \in \mathbb{Z}} I_k$.
	Let $T_{I_k}$ be the product of half twists $T_{i} \circ T_{i+1} \circ \cdots \circ T_{i+n}$ where $\{i, \cdots, i+n\} = \mathbb{Z}\cap I_k$.
	Since the $I_k$'s are mutually disjoint to each other, $T_{I_j}, T_{I_k}$ commutes.
	Let $T := \prod_{k \in \mathbb{Z}} T_{I_k}$. 
	Then $T\circ \eta$ is full. 
\end{proof}
Note that the map $T$ is not a shift map by construction, but it permutes the end space, so it is not in $\PMap(S)$.
One can use a $1/n$-Dehn twist on each $I_k$ instead of composition of half twists as in \cite{grant2021asymptotic}, which yields the same result.

We now show that any types of shift is still in the topological normal closure of the full shift.
We first define the finite boundedness of the map.
\begin{defn}[finite boundness]\label{defn:finite_boundness}
	$f \in \Map(S)$ is \emph{finitely bounded} with respect to $z \in E_{cp}(x, y)$ if there is a representative $f'$ of $f$ that the pullback of support of $f'$ into $\mathcal{B}$ is compact.
\end{defn}
The $T$ for permissible shift map is a finite composition of the elements that is finitely bounded but the $T$ for spontaneous case needs infinitely many half twists.
Still, it is a limit of finitely bounded maps.
\begin{prop}\label{prop:swindle}
	Suppose $\eta$ is full.
	Any finitely bounded map is in $\topnorm{\eta}$.
	Hence, any shift map $\eta'$ is in $\topnorm{\eta}$.
\end{prop}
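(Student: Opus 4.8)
The plan is to run an Eilenberg--Mazur swindle along the full shift $\eta$. The whole proposition reduces to the first assertion, that every finitely bounded $f$ lies in $\topnorm{\eta}$; the claim about an arbitrary shift $\eta'$ will then follow formally from Proposition~\ref{prop:not_full}. So I would begin by unpacking Definition~\ref{defn:finite_boundness}: a finitely bounded $f$ admits a representative supported inside $\mathcal{B}_z(A,B)$ meeting only finitely many of the $\mathbb{Z}$-indexed cells, i.e. $\supp f$ sits in a union of finitely many fundamental domains for the cyclic action of $\eta$ on the strip. Hence I can choose an integer $N>0$ so large that the translates $\supp f,\ \eta^{N}(\supp f),\ \eta^{2N}(\supp f),\dots$ are pairwise disjoint and march monotonically toward the end $B$; in particular they are locally finite, so every compact subsurface of $S$ meets only finitely many of them.

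The key step is the swindle itself. I would set
\[
F \;=\; \prod_{j=0}^{\infty} \eta^{jN}\, f\, \eta^{-jN}.
\]
Because the factors have pairwise disjoint, locally finite supports, this infinite product converges to a well-defined homeomorphism of $S$ representing an element of $\Map(S)$. Conjugation by $\eta^{N}$ merely reindexes the product by $j\mapsto j+1$, giving
\[
\eta^{N} F \eta^{-N} \;=\; \prod_{j=1}^{\infty} \eta^{jN}\, f\, \eta^{-jN},
\]
and, the supports being disjoint (hence the factors commuting), $F = f\cdot(\eta^{N}F\eta^{-N})$. Solving yields $f = F\,\eta^{N}F^{-1}\eta^{-N} = [F,\eta^{N}]$. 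Rewriting $F\eta^{N}F^{-1} = (F\eta F^{-1})^{N}$ then exhibits $f = (F\eta F^{-1})^{N}\eta^{-N}$ as a finite product of conjugates of $\eta^{\pm1}$, so in fact $f$ lies in the algebraic normal closure $\langle\langle\eta\rangle\rangle\subseteq\topnorm{\eta}$ — for finitely bounded maps no limit is needed at all.

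I expect the only genuine subtlety to be checking that $F$ really defines a mapping class: this is exactly where local finiteness of $\{\eta^{jN}(\supp f)\}_{j\ge 0}$ enters, and it rests on the fact built into Definition~\ref{defn:biinfinite_strip_for_z} that $\eta$ pushes the cells monotonically toward $B$, so the supports escape every compact set and the product is continuous with continuous inverse. One must also confirm that the representative of $f$ can indeed be taken supported in $\mathcal{B}_z(A,B)$ across finitely many cells, which is precisely the content of finite boundedness.

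For the final sentence, let $\eta'$ be an arbitrary shift. By Proposition~\ref{prop:not_full} there is a homeomorphism $T$, a product of half twists, with $T\circ\eta'$ the full shift $\eta$, whence $\eta' = T^{-1}\eta$. In the permissible case $T$ is a \emph{finite} product of half twists, each finitely bounded, so $T^{-1}\in\langle\langle\eta\rangle\rangle$ by the first part; in the spontaneous case $T$ is an infinite product of commuting half twists, a limit of its finitely bounded partial products, so $T^{-1}\in\topnorm{\eta}$ since $\topnorm{\eta}$ is closed. As $\topnorm{\eta}$ is a subgroup containing $\eta$, I conclude $\eta' = T^{-1}\eta\in\topnorm{\eta}$.
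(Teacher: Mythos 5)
Your proof is correct and takes essentially the same route as the paper's: the identical Eilenberg--Mazur swindle $F=\prod_{j\geq 0}\eta^{jN}f\eta^{-jN}$ (the paper uses $N=2k$ for $k$ bounding the support) expressing $f$ as the commutator $F\eta^{N}F^{-1}\eta^{-N}$, followed by the same reduction of an arbitrary shift to the full one via Proposition~\ref{prop:not_full}, with the permissible case handled by a finitely bounded $T$ and the spontaneous case by a limit of finitely bounded maps. Your extra observation that a finitely bounded $f$ lies in the algebraic normal closure $\langle\langle\eta\rangle\rangle$, with no limit needed, agrees with the paper's remark immediately after the proposition.
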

\begin{proof}
	Suppose $f$ is finitely bounded.
	Then there is $k > 0$ that the pullback of the support of $f$ into $\mathcal{B}$ is contained in the set $\{(x, y)\in \mathbb{R}^2 ~|~ |x|\leq k \text{ and } |y| \leq 1\}$.
	Consider \footnote{In the proof of Proposition \ref{prop:swindle}, the homeomorphism acts on the right, so that the infinite product in the definition of $\widetilde{f} = f (\eta f \eta^{-1}) (\eta^2 f \eta^{-2}) \cdots$, the leftmost term $f$ is first performed on the surface and $\eta$ and so on.}
	\[
		\widetilde{f} := \prod_{i = 0}^\infty \eta^{2ki} f \eta^{-2ki} 
	\]
	This is well defined, since each support of $\eta^{2ki} f\eta^{-2ki}$ is disjoint from one another.
	Then $\widetilde{f} \eta^{2k} \widetilde{f}^{-1} \eta^{-2k} = f$ and hence $f \in \topnorm{\eta}$.
	
	By Proposition \ref{prop:not_full}, if $\eta'$ is permissible, then there is a finitely bounded map $T$ so that $\eta' = T \circ \eta$.
	Otherwise if $\eta'$ is spontaneous, then there is a limit of finitely bounded maps $T$. 
	Therefore $\eta'$ is in $\topnorm{\eta}$.
\end{proof}
We remark that permissible shifts are in fact in the normal closure of the full shift, since it does not need a limit of finitely bounded maps.

\section{Cohomology classes of $\FMap(S)$}\label{sec:cohomology}
In this section we construct the cohomology of $\FMap(S)$. 
We first recall the definition of $\FMap(S)$ from \cite{grant2021asymptotic}.
\begin{defn}\label{defn:FMap}
	Suppose $S$ is an infinite type surface and $E$ is its end space.
	Let $F := \{ x \in \mathcal{M(S)}~|~ E(x) \text{ is finite }\}$.
	$\FMap(S)$ is the kernel of $\Map(S) \to \Homeo(F)$.
\end{defn}
\emph{i.e.,} $f \in \FMap(S)$ implies that $f$ fixes all maximal ends of $S$ which are isolated in the set of maximal ends. 
By Proposition \ref{prop:mannrafi_prop4.7}, an end $x$ is maximal and isolated in the set of maximal ends if and only if $E(x)$ is finite.
Consider the set of the maximal ends and its immediate predecessors of $\mathcal{B}_z(A, B)$, which is $\{z_i\}_{i \in \mathbb{Z}} \cup \{A, B\}$.  
Now choose a separating curve $\gamma$ which induces a partition of the set into $2$ pieces, $\{z_i\}_{i < 0} \cup \{x_A\}$ and $\{z_i\}_{i \geq 0} \cup \{x_B\}$.
We give an orientation on $\gamma$ such that $A$ is on the left of $\gamma$ with respect to the orientation.
Also, for any $\gamma'$ which separates $x_A$ and $x_B$, we denote $\gamma'_L$ as the partition which contains $x_A$ (the left side of $\gamma'$) and $\gamma'_R$ as the other (the right side of $\gamma'$).

\begin{lem}\label{lem:pass_z_over_gamma}
	Let $f \in \FMap(S)$. 
	Then both $|f(\gamma)_L \cap \gamma_R|$ and $|f(\gamma)_R \cap \gamma_L|$ are finite.
\end{lem}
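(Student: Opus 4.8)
The plan is to reduce everything to the compactness of the two curves $\gamma$ and $f(\gamma)$. First I would record that, because $E(S)$ is countable, Proposition \ref{prop:mannrafi_prop4.7} forces $E(x)$ to be finite for every maximal end $x$ (a Cantor set being uncountable); hence every maximal end is isolated in $\mathcal{M}(S)$ and lies in $F$. Consequently every $f \in \FMap(S)$ fixes $A$ and $B$ as ends, so $f(\gamma)$ again separates $A$ from $B$ and the decomposition $S \setminus f(\gamma) = f(\gamma)_L \sqcup f(\gamma)_R$ is well defined, with $x_A \in f(\gamma)_L$ and $x_B \in f(\gamma)_R$ (the same orientation convention as for $\gamma$, since $f$ is orientation preserving and fixes $A, B$).

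The key step is a separation-by-neighborhoods argument. The set $\gamma \cup f(\gamma)$ is a finite union of simple closed curves, hence compact. Since $A$ and $B$ are ends of $S$, I can choose connected open neighborhoods $V_A$ of $A$ and $V_B$ of $B$ (members of the defining nested sequences of these ends) that are disjoint from the compact set $\gamma \cup f(\gamma)$. Being connected and disjoint from $\gamma$, each of $V_A, V_B$ lies in a single complementary component of $\gamma$; since $A \in V_A$ is an end of $\gamma_L$ and $B \in V_B$ is an end of $\gamma_R$, we get $V_A \subseteq \gamma_L$ and $V_B \subseteq \gamma_R$. Running the identical argument with $f(\gamma)$ in place of $\gamma$ yields $V_A \subseteq f(\gamma)_L$ and $V_B \subseteq f(\gamma)_R$. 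Therefore
\[
	V_A \subseteq \gamma_L \cap f(\gamma)_L, \qquad V_B \subseteq \gamma_R \cap f(\gamma)_R .
\]

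Now I would invoke the indexing of $\mathcal{B}_z(A,B)$: the ends $z_i$ accumulate to $A$ as $i \to -\infty$ and to $B$ as $i \to +\infty$. Thus there is $N$ with $z_i \in V_A$ for all $i \leq -N$ and $z_i \in V_B$ for all $i \geq N$. By the containments above, no end of $V_A$ lies in $\gamma_R$ or in $f(\gamma)_R$, and no end of $V_B$ lies in $\gamma_L$ or in $f(\gamma)_L$; hence none of the $z_i$ with $|i| \geq N$ can lie in either overlap region $f(\gamma)_L \cap \gamma_R$ or $f(\gamma)_R \cap \gamma_L$. Only the finitely many $z_i$ with $|i| < N$ remain as possible contributors, so both $|f(\gamma)_L \cap \gamma_R|$ and $|f(\gamma)_R \cap \gamma_L|$ are bounded by $2N-1$, and in particular finite.

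The only point that requires care — the expected main obstacle — is justifying that the two neighborhoods can be chosen connected and simultaneously disjoint from both curves, and that the end-convergence $z_i \to A$, $z_i \to B$ really forces eventual membership $z_i \in V_A$ (resp. $V_B$); both follow from the definition of an end as an equivalence class of nested sets together with compactness of $\gamma \cup f(\gamma)$. I would emphasize that $f$ need not preserve the strip $\mathcal{B}_z(A,B)$ nor permute the $z_i$ among themselves — the argument never uses this, only that $f$ fixes $A, B$ and that the $z_i$ converge to them.
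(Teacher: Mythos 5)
Your proof is correct and rests on exactly the same mechanism as the paper's: compactness of $\gamma\cup f(\gamma)$, the fact that $f\in\FMap(S)$ fixes the maximal ends $A,B$ (finite equivalence classes, by countability of $E(S)$), and the accumulation of the $z_i$ only at $A$ and $B$. The paper phrases it as a contradiction (infinitely many $z_{m_i}$ on the wrong side of $f(\gamma)$ would accumulate at $x_B$ and force $f(\gamma)$ to be unbounded), whereas you argue directly by trapping all but finitely many $z_i$ in neighborhoods $V_A\subseteq\gamma_L\cap f(\gamma)_L$, $V_B\subseteq\gamma_R\cap f(\gamma)_R$; this is only a cosmetic difference, and your write-up is in fact more careful about the point the paper leaves implicit.
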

\begin{proof}
	Suppose $|f(\gamma)_L \cap \gamma_R|$ is infinite.
	It implies that $f(\gamma)_L$ contains infinitely many ends of type $z$ in $\gamma_R$.
	Since $f$ fixes all maximal ends, $f(\gamma)_L$ cannot contain $x_B$.
	We denote $|f(\gamma)_L \cap \gamma_R| = \{z_{m_i}\}$ with increasing sequence $m_i$.
	Then $m_i$ must diverge to infinity. 
	By definition of the partition, it implies that $f(\gamma)$ cannot be bounded in a compact subsurface, which leads to a contradiction.
\end{proof}

We define $\Phi := \Phi_z$ by $\Phi(f) := |f(\gamma)_L \cap \gamma_R| - |f(\gamma)_R \cap \gamma_L|$.
By Proposition \ref{lem:pass_z_over_gamma}, it is well-defined.

Remark that if we add these two values instead of subtracting them, then this is exactly same with the length function in \cite{grant2021asymptotic}.

\begin{prop}[Properties of $\Phi$]\label{prop:properties_of_Phi}
	Suppose $\gamma$ is an end separating curve which separates the maximal ends $A$ and $B$ and choose $z \in E_{cp}(A, B)$.
	Let $\Phi$ defined with $z$ and $\gamma$ as above.
	\begin{enumerate}
		\item $\Phi(\eta_{A, B, z}) = 1$.
		\item $\Phi : \FMap(S) \to \mathbb{Z}$ is a group homomorphism.
		\item $\Phi(f)$ is trivial if there are neighborhoods of $x_A$ and $x_B$ so that $f$ is an identity on both neighborhoods.
	\end{enumerate}
\end{prop}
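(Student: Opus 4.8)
The plan is to treat $\Phi$ as a \emph{signed flux} of type-$z$ ends across $\gamma$ and to isolate its two structural features, invariance and additivity, before assembling the three assertions.

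For (1), I would argue purely combinatorially. Since $\eta_{A,B,z}$ sends $z_i$ to $z_{i+1}$, the curve $\eta_{A,B,z}(\gamma)$ separates $\{z_i\}_{i\le 0}\cup\{x_A\}$ from $\{z_i\}_{i\ge 1}\cup\{x_B\}$. Intersecting with the partition induced by $\gamma$ gives $\eta_{A,B,z}(\gamma)_L\cap\gamma_R=\{z_0\}$ and $\eta_{A,B,z}(\gamma)_R\cap\gamma_L=\emptyset$, so $\Phi(\eta_{A,B,z})=1-0=1$.

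For (2), I would first extend $\Phi$ to a relative count on pairs of curves: for any two oriented separating curves $\alpha,\beta$ (each separating $A$ from $B$ with $A$ on the left) set $\delta(\alpha,\beta):=|\alpha_L\cap\beta_R|-|\alpha_R\cap\beta_L|$, counting only type-$z$ ends, so that $\Phi(f)=\delta(f(\gamma),\gamma)$. The key is that $\delta$ enjoys two properties. First, \emph{$f$-invariance}: since every $f\in\FMap(S)$ induces a homeomorphism of $E(S)$ that preserves the partial order, hence carries the set of type-$z$ ends bijectively to itself, and fixes $x_A,x_B$ (so it preserves the left/right sides), one has $f(\alpha_L\cap\beta_R)=f(\alpha)_L\cap f(\beta)_R$ and therefore $\delta(f\alpha,f\beta)=\delta(\alpha,\beta)$. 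Second, \emph{additivity along a chain}: writing $\chi_\mu(e)=1$ if the end $e$ lies on the right of $\mu$ and $0$ otherwise, whenever the relevant symmetric differences carry only finitely many type-$z$ ends one has $\delta(\alpha,\beta)=\sum_e(\chi_\beta(e)-\chi_\alpha(e))$ over the finite set where $\chi_\alpha\ne\chi_\beta$, and this telescopes to $\delta(\alpha,\rho)=\delta(\alpha,\beta)+\delta(\beta,\rho)$. Applying these with $\alpha=fg(\gamma)$, $\beta=f(\gamma)$, $\rho=\gamma$ gives
\[
\Phi(fg)=\delta(fg(\gamma),\gamma)=\delta(fg(\gamma),f(\gamma))+\delta(f(\gamma),\gamma)=\delta(g(\gamma),\gamma)+\delta(f(\gamma),\gamma)=\Phi(g)+\Phi(f),
\]
where the finiteness required for additivity is exactly Lemma~\ref{lem:pass_z_over_gamma} applied to $f$, $g$, and $fg$, and the third equality is the invariance.

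For (3), suppose $f$ is the identity on neighborhoods $U_A$ of $x_A$ and $U_B$ of $x_B$. Since the $z_i$ accumulate only to $A$ (as $i\to-\infty$) and to $B$ (as $i\to+\infty$), all but finitely many $z_i$ lie in $U_A\cup U_B$ and are fixed; thus $f$ merely permutes a finite subset $\{z_i:|i|\le N\}$ among themselves. Writing $\Phi(f)=|\{i<0:f(z_i)\in\gamma_R\}|-|\{i\ge 0:f(z_i)\in\gamma_L\}|$, a standard counting argument for a finite permutation shows that the number of indices moving from the $A$-side to the $B$-side equals the number moving the other way, so $\Phi(f)=0$. The main obstacle I expect is the bookkeeping in (2): one must verify that $\delta$ is finite on every pair arising in the chain (which is why Lemma~\ref{lem:pass_z_over_gamma} must be invoked for $fg$ as well as for $f$ and $g$), that the telescoping sum is legitimately over a finite index set, and that the clean identification $f(\alpha_L\cap\beta_R)=f(\alpha)_L\cap f(\beta)_R$ genuinely uses $f$ preserving both the type-$z$ ends and the orientation; assertions (1) and (3) are then essentially finite computations.
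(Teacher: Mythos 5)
Parts (1) and (2) of your proposal are correct and, despite the cocycle packaging, are in substance the paper's own argument: the paper expands $|(f\circ g)(\gamma)_L\cap\gamma_R|$ by inclusion--exclusion over the two sides of $f(\gamma)$ and then uses the substitution $|(f\circ g)(\gamma)_L\cap f(\gamma)_R|=|g(\gamma)_L\cap\gamma_R|$, which is exactly your $f$-invariance of $\delta$; Lemma~\ref{lem:pass_z_over_gamma} supplies the finiteness in both write-ups, and your telescoping identity just organizes the same cancellation more cleanly.

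Part (3) is where you genuinely depart from the paper, and there your argument has a gap: the hypothesis does not imply that $f$ ``merely permutes a finite subset $\{z_i:|i|\le N\}$ among themselves.'' Two things go wrong. First, $\Phi$ counts \emph{all} type-$z$ ends in $f(\gamma)_L\cap\gamma_R$ and $f(\gamma)_R\cap\gamma_L$, not only the indexed strip ends, and $f$ need not carry strip ends to strip ends: if a third maximal end $C$ also admits $z$ as an immediate predecessor (precisely the situation of Theorem~\ref{thm:Theta_z} with $N\ge 3$), then $f$ may exchange $z_0$ with a type-$z$ end clustering at $C$. Second, and more seriously, being the identity near $x_A$ and $x_B$ constrains $f$ nowhere else, so $f$ may move \emph{infinitely many} type-$z$ ends (e.g.\ all those clustering at $C$); the moved set need not be finite, and the ``in equals out'' balancing for permutations is simply false for infinite bijections with finite crossing sets --- the bijection $n\mapsto n+1$ of $\mathbb{Z}$ with the partition $\{n\le 0\}\sqcup\{n>0\}$ has crossing sets of sizes $1$ and $0$, and indeed your own part (1), $\Phi(\eta_{A,B,z})=1$, exhibits exactly such an unbalanced bijection. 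So your counting proof is valid only when the type-$z$ ends outside $U_A\cup U_B$ form a finite set, which you never establish and which fails in general. The paper's proof of (3) avoids counting altogether: since $\mathbb{Z}$ is abelian, part (2) makes $\Phi$ conjugation-invariant, and one chooses $k$ so large that $\eta_{A,B,z}^{-k}(\gamma)$ lies in the region where $f$ is the identity, whence $\eta^k f\eta^{-k}(\gamma)=\gamma$ (with $\eta=\eta_{A,B,z}^k$) and $\Phi(f)=\Phi(\eta^k f\eta^{-k})=0$. If you want to salvage a direct computation, you must first use the hypothesis to control the crossing contributions of ends moved far away from $A$ and $B$ --- which is exactly what your finite-permutation claim was supposed to do, and what the conjugation trick sidesteps.
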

\begin{proof}
	\begin{enumerate}
		\item By definition of $\eta_{A, B, z}$ and $\mathcal{B}_z(A, B)$, $\eta_{A, B, z}(\gamma)_L = \{z_i\}_{i \leq 0} \cup \{x_A\}$ and  $\eta_{A, B, z}(\gamma)_R = \{z_i\}_{i > 0} \cup \{x_B\}$.
		Hence, $\Phi(\eta_{A, B, z}) = 1$.
		\item Let $f, g \in \FMap(S)$.
		By definition, we have
		\[
		|(f\circ g)(\gamma)_L\cap \gamma_R| = |(f\circ g)(\gamma)_L\cap \gamma_R \cap f(\gamma)_L| + |(f\circ g)(\gamma)_L\cap \gamma_R \cap f(\gamma)_R|
		\]
		and each summand can be rewritten as follows.
		\begin{align*}
			|(f\circ g)(\gamma)_L\cap \gamma_R \cap f(\gamma)_L| &= |\gamma_R \cap f(\gamma)_L| - |(f\circ g)(\gamma)_R\cap \gamma_R \cap f(\gamma)_L|\\
			|(f\circ g)(\gamma)_L\cap \gamma_R \cap f(\gamma)_R| &= |(f\circ g)(\gamma)_L \cap f(\gamma)_R| - |(f\circ g)(\gamma)_L\cap \gamma_L \cap f(\gamma)_R| \\
			&= |g(\gamma)_L \cap \gamma_R| - |(f\circ g)(\gamma)_L\cap \gamma_L \cap f(\gamma)_R| 
		\end{align*}
		Similarly, 
		\[
		|(f\circ g)(\gamma)_R\cap \gamma_L| = |(f\circ g)(\gamma)_R\cap \gamma_L \cap f(\gamma)_L| + |(f\circ g)(\gamma)_R\cap \gamma_L \cap f(\gamma)_R|
		\]
		and
		\begin{align*}
			|(f\circ g)(\gamma)_R\cap \gamma_L \cap f(\gamma)_L| &= |(f\circ g)(\gamma)_R \cap f(\gamma)_L| - |(f\circ g)(\gamma)_R\cap \gamma_R \cap f(\gamma)_L|\\
			&= |g(\gamma)_R \cap \gamma_L| - |(f\circ g)(\gamma)_R\cap \gamma_R \cap f(\gamma)_L| \\
			|(f\circ g)(\gamma)_R\cap \gamma_L \cap f(\gamma)_R| &= |\gamma_L \cap f(\gamma)_R| - |(f\circ g)(\gamma)_L\cap \gamma_L \cap f(\gamma)_R|
		\end{align*}
		Therefore, 
		\begin{align*}
			\Phi(f\circ g) &= |(f\circ g)(\gamma)_L\cap \gamma_R| - |(f\circ g)(\gamma)_R\cap \gamma_L|\\
			&= |\gamma_R \cap f(\gamma)_L| + |g(\gamma)_L \cap \gamma_R|  -  |g(\gamma)_R \cap \gamma_L| - |\gamma_L \cap f(\gamma)_R|\\
			&= \Phi(f) + \Phi(g)
		\end{align*}
		\item Suppose $f$ satisfies the condition.
		Then there exists $k$ so that $\eta\circ f \circ \eta^{-1}(\gamma) = \gamma$ where $\eta = \eta_{A, B, z}^k$.
		Thus, $\Phi(f) = \Phi(\eta\circ f \circ \eta^{-1}) = 0$.
	\end{enumerate}
\end{proof}
(1) and (3) in Proposition \ref{prop:properties_of_Phi} implies that $\eta_{A, B, z}$ is not generated by the maps which permutes $z$'s only finitely many.
Notice that the converse of (3) is false if one considers infinitely many half twists that the supports are mutually disjoint.
Therefore, by (3) $\Phi_z$ does not depend on the choice of types of shifts as we discussed in Section \ref{sec:shifts}.

\begin{rmk}
	By construction, $\Phi_z(\eta_{A, B, z'}) = 1$ if and only if $z \sim z'$.
	In other words, $\Phi_z$ detects how many shifts of $z$ occur between $A, B$.
	This detection also works for other types of shift maps.(recall Definition \ref{defn:shift_maps} that there are $3$ types of generalized shifts.)
	Proposition \ref{prop:not_full} implies that even if the shift is not full, it differs only by the surface homeomorphism $T$ to the full shift.
	Since $T$ is generated by half twists (or $1/n$ Dehn twists), $\Phi_z(T)$ is equal to $0$ by (3).
\end{rmk}

Using the same construction, we define $\Phi_z$ for each distinct $z \in E_{cp}(A, B)$.
Comprehensively, we have the following result.

\begin{thm}\label{thm:Theta_z}
	Let $S$ be an infinite type surface with countable end space.
	Suppose $\Map(S)$ is CB generated.
	Choose $z \in E_{cp}(A, B)$ for some maximal ends $A, B$.
	Let $\{A_0, \cdots, A_{N-1}\}$ be the set of maximal ends which admits $z$ as its immediate predecessor.
	We define $\Theta_z : \FMap(S) \to \mathbb{Z}^{N-1}$ as follows.
	\[
	\Theta_z : f \mapsto \big(\Phi_{z_i}(f)\big)_{1 \leq i \leq N-1}
	\]
	where $\Phi_{z_i} := \Phi_z$ for $z\in E_{cp}(A_0, A_i)$. 
	Then the following short exact sequence splits.
	\[
	1 \to \ker(\Theta_z) \to \FMap(S) \xrightarrow{\Theta_z} \mathbb{Z}^{N-1} \to 1
	\]
\end{thm}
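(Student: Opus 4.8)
The statement asks for two things: that $\Theta_z$ is a surjective homomorphism (so the asserted short exact sequence is genuinely exact) and that it admits a homomorphic section. Each coordinate $\Phi_{z_i}\colon \FMap(S)\to\mathbb{Z}$ is a homomorphism by Proposition \ref{prop:properties_of_Phi}(2), so the product map $\Theta_z$ is automatically a homomorphism and $\ker(\Theta_z)$ is a normal subgroup; thus the only real content is to produce, for each basis vector $e_i$ of $\mathbb{Z}^{N-1}$, a preimage, and to arrange these preimages to commute. Surjectivity and the splitting then follow simultaneously.

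The plan is to realize the $e_i$ by pairwise disjoint shifts. For each $i\in\{1,\dots,N-1\}$ let $\gamma_i$ be the canonical separating curve of a strip $\mathcal{B}_z(A_0,A_i)$ used to define $\Phi_{z_i}$. Since $z\in E_{cp}(A_0,A_i)$ for every $i$, the maximal end $A_0$ is accumulated by infinitely many ends of type $z$; I would partition these into $N-1$ infinite subcollections, each still accumulating to $A_0$, and use the $i$-th subcollection (together with $z$-ends accumulating to $A_i$) to build the strip $\mathcal{B}_z(A_0,A_i)$. The strips obtained this way share only the limiting end $A_0$ and can be taken to have pairwise disjoint supports, since they use disjoint families of $z$-ends and disjoint approaches to $A_0$, and otherwise run out to distinct maximal ends $A_i$. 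Let $\eta_i:=\eta_{A_0,A_i,z}$ be the full shift supported on the $i$-th strip; each $\eta_i$ fixes every maximal end pointwise, hence lies in $\FMap(S)$.

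Next I would show that the matrix $\big(\Phi_{z_j}(\eta_i)\big)_{i,j}$ is the identity. The diagonal entries are $\Phi_{z_i}(\eta_i)=1$ by Proposition \ref{prop:properties_of_Phi}(1). For $j\neq i$ the curve $\gamma_j$ lies in the $j$-th strip, which is disjoint from the support of $\eta_i$; hence $\eta_i(\gamma_j)=\gamma_j$ and $\Phi_{z_j}(\eta_i)=0$ directly from the definition of $\Phi_{z_j}$, since no end can lie on both sides of $\gamma_j$. Therefore $\Theta_z(\eta_i)=e_i$, so $\Theta_z$ is surjective and the short exact sequence is exact. Finally, because the strips were chosen with pairwise disjoint supports the homeomorphisms $\eta_i$ commute, so the assignment $e_i\mapsto\eta_i$ extends to a homomorphism $s\colon\mathbb{Z}^{N-1}\to\FMap(S)$; the computation $\Theta_z\circ s=\mathrm{id}$ exhibits $s$ as a section, and the sequence splits.

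I expect the main obstacle to be the geometric construction in the second paragraph: arranging $N-1$ embedded strips to be \emph{pairwise disjoint} while each one still limits onto the common maximal end $A_0$ and realizes a nontrivial shift across $\gamma_i$. Everything afterward---exactness, the identity-matrix computation, and the existence of a homomorphic (not merely set-theoretic) section---is formal, relying only on the homomorphism and normalization properties of $\Phi$ recorded in Proposition \ref{prop:properties_of_Phi} together with the disjointness of supports.
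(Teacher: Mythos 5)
Your proposal is correct and follows essentially the same route as the paper's proof: take $\eta_i := \eta_{A_0,A_i,z}$, verify $\Theta_z(\eta_i)=e_i$, and use commutativity of the $\eta_i$ to extend $e_i\mapsto\eta_i$ to a homomorphic section. The paper simply asserts the commutativity and the identity-matrix computation, whereas you justify both via pairwise disjoint realizations of the strips $\mathcal{B}_z(A_0,A_i)$, which is the natural way to fill in those details.
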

\begin{proof}
	Consider the standard basis $\{e_i\}$ of $\mathbb{Z}^{N-1}$, where $z \in E_{cp}(A_0, A_{i})$ is an end corresponding to the $i$th entry.
	Let $\eta_i := \eta_{A_0, A_i, z}$ .
	Then $\Theta(\eta_i) = e_i$.
	Since all $\eta_i$'s commute each other, the sequence splits.
\end{proof}
Collecting all types of immediate predecessors, we have the following.
\begin{thm}\label{thm:Theta}
	Let $\Theta$ be the direct sum of $\Theta_z$'s for distinct types of immediate predecessors $z$.
	Then 
	\[
		1 \to \ker(\Theta) \to \FMap(S) \xrightarrow{\Theta} \bigoplus_{z} \mathbb{Z}^{N_z - 1} \to 1
	\]
	splits, where $N_z$ is the number of maximal ends which admits $z$ as an immediate predecessor.
\end{thm}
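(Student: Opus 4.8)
The plan is to assemble the splitting of $\Theta$ out of the splittings of the individual $\Theta_z$ already supplied by Theorem \ref{thm:Theta_z}, so the first task is to pin down the index set. Since $\Map(S)$ is CB generated there are only finitely many types of maximal ends, and for each pair $A,B$ the set $E_{cp}(A,B)$ contains only finitely many types; hence only finitely many types of immediate predecessors $z$ occur, the direct sum $\bigoplus_z \mathbb{Z}^{N_z-1}$ is genuinely finite, and it is a free abelian group of finite rank. In particular $\Theta$ is a homomorphism into a finitely generated free abelian group, so to split the sequence it suffices to produce a homomorphic section.

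For each type $z$, enumerate the maximal ends admitting $z$ as an immediate predecessor as $A_0^z,\dots,A_{N_z-1}^z$, and let $\sigma_z\colon\mathbb{Z}^{N_z-1}\to\FMap(S)$ be the section of Theorem \ref{thm:Theta_z}, determined by $\sigma_z(e_i)=\eta_{A_0^z,A_i^z,z}$. The key orthogonality is that $\Theta_{z'}\circ\sigma_z=0$ whenever $z'\not\sim z$: each coordinate $\Phi_w$ (with $w\sim z'$) of $\Theta_{z'}$ counts only the passage of type-$w$ ends across its separating curve, and a type-$z$ shift fixes every end that is not of type $z$; since $w\sim z'\not\sim z$, it fixes all type-$w$ ends, whence $\Phi_w(\eta_{A_0^z,A_i^z,z})=0$, as recorded in the Remark after Proposition \ref{prop:properties_of_Phi}. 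Combined with $\Theta_z\circ\sigma_z=\mathrm{id}$ from Theorem \ref{thm:Theta_z}, this shows that if we assemble the $\sigma_z$ into a single map $\sigma$ sending the $i$th standard basis vector of each $\mathbb{Z}^{N_z-1}$ summand to $\eta_{A_0^z,A_i^z,z}$, then $\Theta\circ\sigma=\mathrm{id}$ on every summand, hence on all of $\bigoplus_z\mathbb{Z}^{N_z-1}$. This also proves surjectivity of $\Theta$.

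It remains to verify that $\sigma$ is a homomorphism, which reduces to showing that the entire collection of shifts $\{\eta_{A_0^z,A_i^z,z}\}$, ranging over all types $z$ and all indices $i$, pairwise commute. For shifts of a common type $z$ this is already part of Theorem \ref{thm:Theta_z}; the new point is commutation of shifts of \emph{distinct} types $z\not\sim z'$. I would argue this geometrically: the ends moved by a type-$z$ shift and those moved by a type-$z'$ shift form disjoint subsets of $E(S)$, so the biinfinite strips $\mathcal{B}_z$ and $\mathcal{B}_{z'}$ of Definition \ref{defn:biinfinite_strip_for_z} can be embedded with disjoint images, and choosing representatives supported on disjoint neighborhoods of these strips makes the two shifts commute.

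The main obstacle is exactly this last step: producing disjoint (equivalently, commuting) representatives for shifts of different types that nonetheless accumulate to the \emph{same} maximal ends $A$ and $B$. Near such a shared maximal end the supports of the two strips both accumulate, and one must exploit stability of the maximal end—every neighborhood of $A$ contains a homeomorphic copy of a fixed neighborhood—to route the two families of ends into disjoint corridors at every scale. Once disjointness of supports is secured, the commutation, and hence the fact that $\sigma$ is a genuine homomorphic section, follows formally, and the sequence splits.
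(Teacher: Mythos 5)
Your proposal is correct and takes essentially the same route as the paper: the paper's proof likewise reduces everything to the commutation of shift maps for distinct types of immediate predecessors, justified by realizing the biinfinite strips $\mathcal{B}_z$ disjointly for distinct $z$'s. The orthogonality $\Phi_{z'}(\eta_{A_0,A_i,z})=0$ for $z'\not\sim z$ that you invoke is exactly the content of the remark following Proposition \ref{prop:properties_of_Phi}, which the paper uses implicitly.
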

\begin{proof}
	It is enough to show that the shift map for distinct immediate predecessors $z$'s commutes with each other.
	This is true because we could realize the infinite strip $\mathcal{B}$ to be disjoint for distinct $z$'s.
\end{proof}
Remark that we could still extract a handle shift $h_{A, B}$ between two maximal ends $x_A, x_B$ if (1) $x_A, x_B$ are both ends accumulated by genus and (2) every $z \in E_{cp}(A, B)$ is not accumulated by genus.

As a result, we get the following.
\begin{thm}\label{thm:semi_directprod_of_FMap}
	Suppose $S$ is a tame, infinite type surface.
	Let $G_0 := \{ x \in \mathcal{M}(S) ~|~ x \in E^G \text{ and } Im(x) \cap E^G(S) = \emptyset\}$.
	Then, 
	\[
		\FMap(S) = \mathcal{F} \rtimes \left( \prod_{\substack{A, B \in \mathcal{M}(S) \\ \text{if } z \in E_{cp}(A, B)}} \langle \eta_{A, B, z} \rangle \oplus \prod_{\substack{A, B \in \mathcal{M}(S) \\ \text{if both } A, B \in G_0}} \langle h_{A, B} \rangle \right)
	\]
	where, the $\mathcal{F}$ is a subgroup which contains all finitely bounded mapping classes.
\end{thm}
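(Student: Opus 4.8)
The plan is to build on the generalized-shift splitting of Theorem~\ref{thm:Theta} by constructing a parallel handle-counting homomorphism, assembling everything into a single ``flux'' map onto a free abelian group, and exhibiting a section made out of the shift maps themselves. Concretely, let the target be
\[
\mathcal{S}:=\Bigl(\bigoplus_{z}\mathbb{Z}^{N_z-1}\Bigr)\oplus\Bigl(\bigoplus_{A,B\in G_0}\mathbb{Z}\Bigr),
\]
where the first summand is the codomain of $\Theta$ from Theorem~\ref{thm:Theta} and the second records handle flux between pairs of genus-accumulated maximal ends whose common immediate predecessors carry no genus. I would read the product in the statement through the base-end normalization of Theorem~\ref{thm:Theta} (which reduces each type $z$ to $N_z-1$ independent shifts by absorbing the cocycle relation $\eta_{A,C,z}\sim\eta_{A,B,z}\,\eta_{B,C,z}$ into the kernel), so that the shift generators really do span a copy of $\mathcal{S}$. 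The whole theorem then follows once I produce a homomorphism $\Xi\colon\FMap(S)\to\mathcal{S}$, whose $z$-components are the already-constructed $\Phi_z$ and whose $(A,B)$-components are new functionals $\Psi_{A,B}$, and show it is a split surjection with kernel $\mathcal{F}$ containing every finitely bounded class.

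For the handle part, fix $A,B\in G_0$ and an oriented end-separating curve $\gamma$ with $A$ on its left and $B$ on its right, and imitate the construction of $\Phi_z$, replacing the cardinality of $z$-ends by the genus of the corresponding subsurface region:
\[
\Psi_{A,B}(f):=\mathrm{genus}\bigl(f(\gamma)_L\cap\gamma_R\bigr)-\mathrm{genus}\bigl(f(\gamma)_R\cap\gamma_L\bigr).
\]
The first task is the analogue of Lemma~\ref{lem:pass_z_over_gamma}: this count is finite. Here the defining property of $G_0$ is essential — since every $z\in E_{cp}(A,B)$ lies outside $E^G(S)$, the handles accumulating to $A$ and to $B$ form a bi-infinite sequence that is genus-isolated from the intermediate ends, so $f(\gamma)$, being bounded in a compact subsurface, can engulf only finitely many of them on either side. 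That $\Psi_{A,B}$ is a homomorphism is the same telescoping inclusion--exclusion computation as in Proposition~\ref{prop:properties_of_Phi}(2) with genus in place of $z$-end count, and the argument of Proposition~\ref{prop:properties_of_Phi}(3) gives both $\Psi_{A,B}(h_{A,B})=1$ and the vanishing of $\Psi_{A,B}$ on maps that are the identity near $x_A$ and $x_B$.

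It remains to check that $\Xi$ is a split surjection and to identify its kernel. Surjectivity and the section come together: the generalized shifts $\eta_{A,B,z}$ and the handle shifts $h_{A,B}$ can all be realized with mutually disjoint supports — the strips $\mathcal{B}_z(A,B)$ and the genus-carrying strips of the handle shifts may be chosen pairwise disjoint, exactly as in the proof of Theorem~\ref{thm:Theta} — so these commuting elements map to a basis of $\mathcal{S}$, and assigning each basis vector its shift defines a homomorphic section. Setting $\mathcal{F}:=\ker\Xi$ then yields the asserted semidirect product $\FMap(S)=\mathcal{F}\rtimes(\text{shift group})$. Finally, every finitely bounded class lies in $\mathcal{F}$: by Definition~\ref{defn:finite_boundness} its support pulls back to a compact piece of each strip, so it moves only finitely many $z$-ends and finitely many handles across each $\gamma$, and the vanishing clause of Proposition~\ref{prop:properties_of_Phi}(3) together with its handle analogue forces every $\Phi_z$ and $\Psi_{A,B}$ to annihilate it.

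The main obstacle I anticipate is the genus-flux finiteness together with the genuine independence of the handle functionals from the shift functionals. The finiteness of $\Psi_{A,B}$ rests entirely on the hypothesis $A,B\in G_0$; without genus-isolation of the intermediate ends a separating curve could drag infinitely many handles past $\gamma$ while staying compactly bounded, and the count would diverge. Verifying that the handle strips can be chosen disjoint from all the $\mathcal{B}_z(A,B)$ — so that $\Xi$ genuinely lands in a \emph{direct} sum and the section is well defined — requires a careful simultaneous arrangement of supports, and this is where I expect most of the technical effort to go.
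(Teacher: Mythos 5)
Your proposal is correct and follows essentially the same route as the paper: the paper's own (very terse) proof is exactly "Theorem \ref{thm:Theta} plus handle shifts," with the genus-flux homomorphisms implicitly borrowed from \cite{aramayona2020firstcohomology} and the role of $G_0$ being precisely the genus-isolation you identify for finiteness of $\Psi_{A,B}$. In fact, your write-up makes explicit (the genus-count functional, its finiteness via $G_0$, and the disjoint-support section) what the paper only remarks in passing, so there is nothing to flag beyond noting that you have supplied more detail than the original.
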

The only term we add from Theorem \ref{thm:Theta} is the handle shift.
Handle shifts arise between any maximal ends accumulated by genus, but what it matters is the handle shift that any of $z \in Im(x)$ is not accumulated by genus.
We remark that $x \in G_0$ implies that the handle is in $Im(x)$.

We have now proved that if there are at least $2$ distinct maximal end types, we get the following.
\begin{prop}\label{prop:max_2_types_FMap}
	Suppose $S$ is a tame, infinite type surface with countable $E(S)$ and $\Map(S)$ is CB generated.
	If there are at least $2$ maximal end types in $E(S)$, then $\FMap(S)$ is not topologically normally generated.
\end{prop}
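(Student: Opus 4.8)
The plan is to manufacture a continuous surjective homomorphism from $\FMap(S)$ onto a non-cyclic abelian group and then apply the obstruction of Lemma \ref{lem:vlamis_obs}. I would begin with a reduction. Since $E(S)$ is countable, Proposition \ref{prop:mannrafi_prop4.7} forces $E(x)$ to be finite for every maximal end $x$, because the only other possibility, a Cantor set, is uncountable. Hence the set $F$ of Definition \ref{defn:FMap} equals all of $\mathcal{M}(S)$, so $\FMap(S)$ is exactly the subgroup fixing every maximal end pointwise, and the decomposition of Theorem \ref{thm:semi_directprod_of_FMap} is available. Fixing representatives $A, B \in \mathcal{M}(S)$ of two distinct types, I would record that maximality makes $A$ and $B$ mutually $\preccurlyeq$-incomparable: there is a local feature (a type of immediate predecessor, or genus) accumulating to $A$ but not to $B$, and symmetrically one for $B$ not for $A$.

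The main step is to show that the abelian quotient of Theorem \ref{thm:semi_directprod_of_FMap}, namely $\bigoplus_z \mathbb{Z}^{N_z - 1} \oplus \prod_{A,B \in G_0} \langle h_{A,B}\rangle$, together with the splitting $\Theta$ of Theorem \ref{thm:Theta}, admits a continuous surjection onto $\mathbb{Z}^2$. When the two incomparable ends are linked by two independent generalized shifts --- either two common immediate predecessors $z, z' \in E_{cp}(A,B)$, or one shift together with a handle shift between two genus-accumulated ends in $G_0$ --- this is immediate: the homomorphisms $\Phi_z, \Phi_{z'}$ of Proposition \ref{prop:properties_of_Phi} (and their handle analogue) are independent because their defining biinfinite strips can be taken disjoint, so $f \mapsto (\Phi_z(f), \Phi_{z'}(f))$ surjects onto $\mathbb{Z}^2$ and is continuous. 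Lemma \ref{lem:vlamis_obs} then finishes the argument in this generic situation.

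The hard part will be the degenerate configurations in which the free abelian part of Theorem \ref{thm:semi_directprod_of_FMap} has rank at most one --- for instance a genus-accumulated end together with a puncture-accumulated end sharing no common immediate predecessor, so that $E_{cp}(A,B) = \emptyset$ and $G_0$ is a singleton, whence there are no generalized shifts and no handle shifts at all. Here the structure theorem alone gives only $\FMap(S) = \mathcal{F}$, and by the swindle of Proposition \ref{prop:swindle} a single shift (were one present) could even topologically normally generate $\mathcal{F}$; so the non-cyclic quotient must instead come from the internal structure of $\mathcal{F}$. In this case I would supplement the (possibly trivial) shift count with a separating-curve homomorphism of Aramayona--Patel--Vlamis / Domat type \cite{aramayona2020firstcohomology, domat2022pure} localized at the genus (or puncture) accumulation of one of the two ends; because $\FMap(S)$ fixes every maximal end, such a class is pure near that end and descends to a continuous homomorphism to $\mathbb{Z}$.

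The delicate points, and the genuine obstacle, are thus (i) producing a nonzero such class attached to \emph{each} of the two incomparable types, using precisely the distinguishing local feature guaranteed by incomparability, and (ii) verifying that these two classes are independent and jointly continuous, so that together they surject onto a non-cyclic abelian group. This is exactly the situation controlled by the parity invariant developed in Section \ref{sec:parity}, to which I would appeal to complete the degenerate case; the generic case above then follows directly from Theorem \ref{thm:semi_directprod_of_FMap} and Lemma \ref{lem:vlamis_obs}.
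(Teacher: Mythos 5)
Your ``generic case'' is exactly the paper's mechanism: two independent flux homomorphisms give a continuous surjection $\FMap(S)\to\mathbb{Z}^2$, and Lemma \ref{lem:vlamis_obs} finishes. But the whole content of the proposition sits in what you call the degenerate case, and there your plan has a genuine gap, for two reasons. First, the tools you propose cannot work: Domat-type homomorphisms to $\mathbb{Q}$ are not continuous --- the paper's introduction stresses that the Loch Ness monster surface admits such a surjection and nevertheless has the Rokhlin property --- so they are invisible to Lemma \ref{lem:vlamis_obs}; and the parity invariant of Section \ref{sec:parity} is built from the permutation action on isolated maximal ends together with half twists, so its restriction to $\FMap(S)$ (where the permutation coordinate vanishes by definition) has cyclic image and cannot produce a non-cyclic quotient. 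Second, and more fundamentally, your argument never uses the CB-generated hypothesis, which is precisely what is needed at this point.

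The paper's proof closes the degenerate case by showing it is \emph{vacuous}. If $\mathcal{M}(S)=\{A,B\}$ with $A\not\sim B$, incomparability yields a type of immediate predecessor (or genus) all of whose ends lie in one complementary piece, so $W_A$ or $W_B$ is nonempty; since ambient homeomorphisms preserve end types, a small neighborhood $V$ of $x_A$ omitting one such end can never be mapped over the standard piece, violating the small zoom condition of Theorem \ref{thm:Mann-Rafi_locCB} and contradicting that $\Map(S)$ is even locally CB. Hence a third maximal end $C$ exists, and the same reasoning forces at least two elements $z,w$ in the disjoint union $E_{cp}(A,B)\cup E_{cp}(B,C)\cup E_{cp}(C,A)$ --- note these shifts may join \emph{different} pairs of maximal ends, a configuration your case split around the single pair $(A,B)$ does not cover --- and restricting $\Phi$ to $z,w$ lands back in the generic mechanism. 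The once-punctured Loch Ness monster shows why this step is unavoidable: it realizes your degenerate configuration (a genus end and a puncture end, $E_{cp}(A,B)=\emptyset$, no handle shifts), its mapping class group fails to be locally CB by exactly the small-zoom computation above, and in fact it \emph{is} topologically normally generated (by the Aramayona--Patel--Vlamis result quoted in the introduction, Dehn twists topologically normally generate it, and here $\Map(S)=\PMap(S)=\overline{\Map_c(S)}$), so no construction of a non-cyclic continuous abelian quotient could have succeeded there.
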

\begin{proof}
	Let $A, B$ be distinct maximal ends of $S$.
	If $\{A, B\} = \mathcal{M}(S)$, then either $W_A$ or $W_B$ is nonempty and this leads to a contradiction that $A$ does not satisfy the small zoom condition.
	Hence there must be another maximal end, denoted by $C$.
	It implies that there are at least $2$ elements $z, w$ in the disjoint union $E_{cp}(A, B) \cup E_{cp}(B, C) \cup E_{cp}(C, A)$ and thus we have a surjective homomorphism from $G$ to $\mathbb{Z}^2$ by restricting $\Phi$ only for $z, w$.
	By the lemma \ref{lem:vlamis_obs}, we are done.
\end{proof}

\section{From $\FMap(S)$ to $\Map(S)$}\label{sec:parity}
To disprove the normal generation of $\Map(S)$, the only thing left is to add the half twists.
The main problem dealing with the half twist is that it may permute the maximal end, so that we cannot construct the $\Theta$ map as before.
By Theorem \ref{thm:main_obstruction}, we restrict the surface $S$ which has only one type of maximal end that appears more than once.

Throughout this section, we will denote maximal ends of the same type $x$ to be $x_1, \cdots, x_n$ and the others to be $y_1 \cdots, y_m$.
Again by Theorem \ref{thm:main_obstruction}, all $y_i$'s are distinct from each other.

Since there are many ways to make a half twist, we begin with fixing the half twist for $x_i$'s.
By Proposition \ref{prop:prop5.4_mannrafi}, $S$ is decomposed as follows.
See Figure \ref{fig:contable_surface_decomp.png}.
\begin{itemize}
	\item Uniquely self-similar surfaces $K_1, \cdots, K_{n}$, and all $K_i$'s are homeomorphic to one another and its maximal end is of type $x_i$.
	\item Uniquely self-similar surfaces $K'_1, \cdots, K'_{m}$, and the maximal end of each $K'_j$ is of type $y_j$.
	\item A finite type surface $L$, which is homeomorphic to a $2$-sphere with $n+m$ boundaries.
\end{itemize}

\begin{figure}
	\centering
	\includegraphics[width=.5\textwidth]{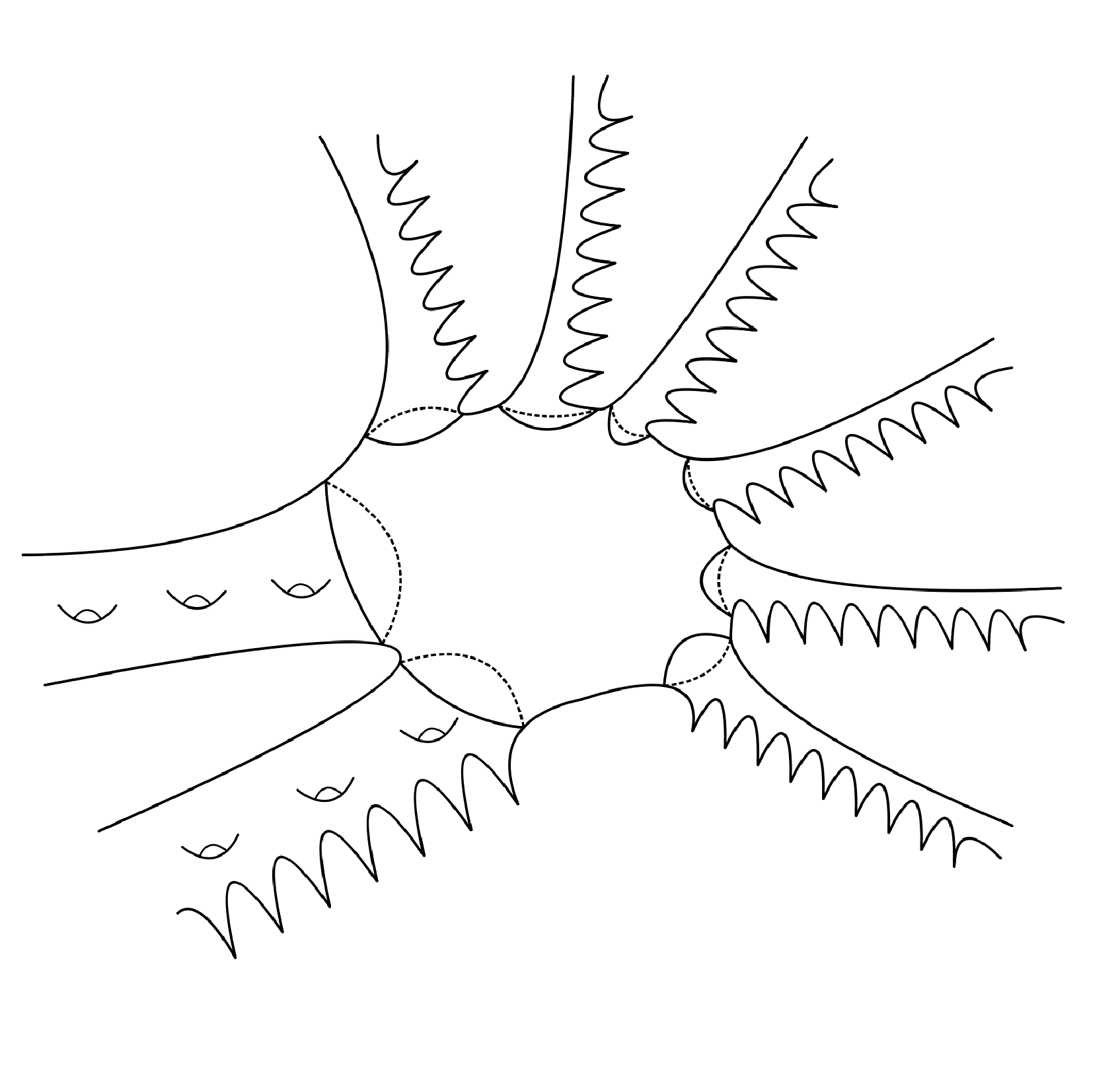}
	\caption{Surface decomposition for countable end space. Here we have $K_1, \cdots, K_6$ of unique maximal end $x \sim \omega + 1$, $K_1'$ of the end accumulated by genus only, and $K_2'$ of end accumulated by both genus and punctures.}
	\label{fig:contable_surface_decomp.png}
\end{figure}

Let $g_{i,j}$ be a half twist that exchanges $x_i$ and $x_j$.
We choose the representative of $g_{i,j}$ induced by the half twist in $L$. 

First observe that there are shift maps which are in the same conjugacy class.
\begin{lem}\label{lem:same_conjugacy_xy}
	For any $1\leq i, j \leq n$ and any $z \in E_{cp}(x_i, y_k)$, $\eta_{x_i, y_k, z}$ and $\eta_{x_j, y_k, z}$ are in the same conjugacy class if $z \in E_{cp}(x_i, y_k)$.
\end{lem}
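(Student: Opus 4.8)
The plan is to realize the conjugacy directly through the half twist $g_{i,j}$ exchanging $x_i$ and $x_j$. Recall from the decomposition in Proposition \ref{prop:prop5.4_mannrafi} that $g_{i,j}$ may be chosen supported in the central sphere-with-boundary $L$, so that it exchanges the boundary circles bounding $K_i$ and $K_j$ while fixing every other boundary circle; in particular it fixes the end $y_k$ and carries a neighborhood of $x_i$ homeomorphically onto a neighborhood of $x_j$ (using that all the $K_i$ are mutually homeomorphic). First I would track supports: the map $\eta_{x_i, y_k, z}$ is supported on the biinfinite strip $\mathcal{B}_z(x_i, y_k)$ of Definition \ref{defn:biinfinite_strip_for_z}, whose two ends limit to $x_i$ and $y_k$. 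Applying $g_{i,j}$ carries this strip to $g_{i,j}(\mathcal{B}_z(x_i, y_k))$, a strip whose ends now limit to $x_j$ and $y_k$ and which still encloses a $\mathbb{Z}$-indexed family of subsurfaces each having unique maximal end of type $E(z)$.

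Next I would argue that $g_{i,j}\,\eta_{x_i, y_k, z}\,g_{i,j}^{-1}$ is itself a full $z$-shift between $x_j$ and $y_k$. Since conjugation transports the action on the end space, the conjugated map sends the maximal end of the $m$-th subsurface of $g_{i,j}(\mathcal{B}_z)$ to that of the $(m+1)$-st, and fixes both $x_j$ and $y_k$. This is precisely the defining property of a shift map $\eta_{x_j, y_k, z}$ associated to the strip $g_{i,j}(\mathcal{B}_z(x_i, y_k))$, and the orientation-preserving property of $g_{i,j}$ ensures the $\mathbb{Z}$-indexing is preserved rather than reversed, so no direction discrepancy arises.

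Finally I would invoke a change-of-coordinates argument to match conjugacy classes. The shift $\eta_{x_j, y_k, z}$ in the statement is defined relative to a possibly different embedding of the biinfinite strip, but any two embeddings $\mathcal{B}_z(x_j, y_k)$ of the same predecessor type admit an ambient homeomorphism of $S$ carrying one to the other compatibly with the indexing and fixing $x_j, y_k$; this follows from the self-similarity of $K_j$ and $K'_k$, with Proposition \ref{prop:not_full} absorbing any residual reindexing into half twists. Conjugating by such a homeomorphism shows that $g_{i,j}\,\eta_{x_i, y_k, z}\,g_{i,j}^{-1}$ and $\eta_{x_j, y_k, z}$ are conjugate, whence $\eta_{x_i, y_k, z}$ and $\eta_{x_j, y_k, z}$ lie in the same conjugacy class.

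The main obstacle I anticipate is this last step: making the change-of-coordinates precise, that is, verifying that two biinfinite strips of the same predecessor type $z$ between the same pair of maximal ends are ambiently equivalent in an index-preserving fashion. This is exactly where the tameness and CB-generation hypotheses on $S$ enter, since they are what guarantee stable neighborhoods and a uniquely self-similar local model near each $x_j$ and $y_k$ in which the equivalence of strips can be produced.
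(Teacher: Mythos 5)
Your first two steps are precisely the paper's argument: the paper's entire proof is the single identity $\eta_{x_i, y_k, z} = g_{i,j} \circ \eta_{x_j, y_k, z} \circ g_{i,j}$, i.e.\ conjugation by the fixed half twist supported in $L$, which is exactly what your support-tracking establishes. The gap is in your third step, and the obstacle you flagged there is not just a technicality but is fatal as stated. The claim that any two embeddings of a biinfinite strip $\mathcal{B}_z(x_j, y_k)$ of the same predecessor type admit an ambient homeomorphism of $S$ carrying one to the other compatibly with the indexing is false in general, and tameness and CB generation do not rescue it. The paper itself notes, right after Definition \ref{defn:biinfinite_strip_for_z}, that strips are highly non-unique: a strip may exclude infinitely many ends of type $z$. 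Concretely, suppose $A, B$ are two maximal ends joined by a bi-infinite family of punctures $\{z_n\}_{n \in \mathbb{Z}}$ of type $z$. The full shift built from a strip containing every $z_n$ fixes only $A$ and $B$ in the end space, whereas the full shift built from a strip containing only the even-indexed punctures fixes $A$, $B$, and all odd-indexed punctures. A conjugating mapping class would carry the fixed end set of one homeomorphically onto that of the other, which is impossible since one is finite and the other infinite. This non-conjugacy is exactly why the paper distinguishes full, permissible and spontaneous shifts (Definition \ref{defn:shift_maps}): a shift arising from a different choice differs from the full one by composition with a homeomorphism $T$ (Proposition \ref{prop:not_full}), and in the spontaneous case it lies only in $\topnorm{\eta}$ (Proposition \ref{prop:swindle}), not in the conjugacy class of $\eta$.

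The repair is to delete the third step rather than prove it. The lemma concerns the specific shifts fixed at the start of Section \ref{sec:parity}: there the decomposition of Proposition \ref{prop:prop5.4_mannrafi} makes the $K_i$ mutually homeomorphic, the half twists $g_{i,j}$ are fixed representatives supported in $L$, and the shifts $\eta_{x_i, y_k, z}$ may be (and in the paper implicitly are) chosen compatibly, namely so that the strip for $\eta_{x_i, y_k, z}$ is the $g_{i,j}$-image of the strip for $\eta_{x_j, y_k, z}$. With that convention your first two steps already constitute the whole proof, and no choice-independence statement is needed --- indeed, as the example above shows, none holds at the level of conjugacy classes.
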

\begin{proof}
	$\eta_{x_i, y_k, z} = g_{i,j} \circ \eta_{x_j, y_k, z} \circ g_{i,j}$.
\end{proof}
Similarly, we have the following.
\begin{lem}\label{lem:same_conjugacy_xx}
	Suppose $z$ is an immediate predecessor of $x$.
	For any $1 \leq i, j \leq n$, $\eta_{x_i, x_j, z} = g_{i,j} \circ \eta_{x_i, x_j, z}^{-1} \circ g_{i,j}$.
\end{lem}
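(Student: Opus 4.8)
The plan is to establish this as the orientation-reversing counterpart of Lemma~\ref{lem:same_conjugacy_xy}. In that lemma conjugation by $g_{i,j}$ merely relabels one endpoint of a shift; here \emph{both} endpoints of the shift $\eta_{x_i,x_j,z}$ are interchanged, so conjugation reverses the shift and produces its inverse. The geometric content is that $g_{i,j}$ swaps the two maximal ends $x_i$ and $x_j$ that the biinfinite strip $\mathcal{B}_z(x_i,x_j)$ of Definition~\ref{defn:biinfinite_strip_for_z} runs between, hence carries that strip to itself with its two ends exchanged; a shift conjugated by a homeomorphism reversing its strip is the oppositely directed shift, i.e.\ $\eta_{x_i,x_j,z}^{-1}$.

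Concretely, first I would fix the $\mathbb{Z}$-indexing $\{z_k\}_{k\in\mathbb{Z}}$ of the $z$-type ends inside $\mathcal{B}_z(x_i,x_j)$ so that $z_k\to x_i$ as $k\to-\infty$, $z_k\to x_j$ as $k\to+\infty$, and $\eta_{x_i,x_j,z}(z_k)=z_{k+1}$. Using that the pieces $K_1,\dots,K_n$ of the decomposition in Proposition~\ref{prop:prop5.4_mannrafi} are mutually homeomorphic, I would choose the representative of $g_{i,j}$ (the half twist supported in the disk of $L$ that encloses exactly the two boundary circles of $x_i$ and $x_j$) so that it maps $\mathcal{B}_z(x_i,x_j)$ onto itself, interchanges $x_i$ and $x_j$, and acts on the $z$-ends by the reflection $z_k\mapsto z_{1-k}$. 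On the end set $\{z_k\}\cup\{x_i,x_j\}$ this reflection is an involution, so a direct computation gives
\[
(g_{i,j}\circ\eta_{x_i,x_j,z}\circ g_{i,j})(z_k)=g_{i,j}\big(\eta_{x_i,x_j,z}(z_{1-k})\big)=g_{i,j}(z_{2-k})=z_{k-1}=\eta_{x_i,x_j,z}^{-1}(z_k).
\]
Since conjugating a shift by a homeomorphism yields the shift along the image strip equipped with the transported indexing, and here that indexing is exactly the reversed one, $g_{i,j}\circ\eta_{x_i,x_j,z}\circ g_{i,j}$ is the reverse shift $\eta_{x_i,x_j,z}^{-1}$, which is the assertion.

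As a consistency check I would note that $g_{i,j}\circ\eta_{x_i,x_j,z}\circ g_{i,j}$ lies in $\FMap(S)$, because the two copies of $g_{i,j}$ restore every maximal end, and that $\Phi_z$ of it equals $-1=\Phi_z(\eta_{x_i,x_j,z}^{-1})$ by Proposition~\ref{prop:properties_of_Phi}, matching the claimed inverse.

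The point that needs care—and the one I would state explicitly—is that $g_{i,j}$ is a half twist rather than a genuine involution of $S$: it acts as an involution on the end set $\{z_k\}\cup\{x_i,x_j\}$, but $g_{i,j}^2$ is a Dehn twist $T_c$ about the curve $c$ separating $x_i,x_j$ from the remaining ends. Thus a naive count only gives $g_{i,j}\circ\eta_{x_i,x_j,z}\circ g_{i,j}=\eta_{x_i,x_j,z}^{-1}\,T_c$. I would resolve this by checking supports: $\eta_{x_i,x_j,z}$ is supported away from a collar of $c$, so $T_c$ commutes with it and satisfies $\Phi_z(T_c)=0$; the leftover is a compactly supported twist lying in the finitely bounded kernel $\mathcal{F}$ and is invisible to all the shift invariants built in Section~\ref{sec:shifts}. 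Making the bookkeeping of this boundary twist precise—equivalently, pinning down the choice between $g_{i,j}$ and $g_{i,j}^{-1}$ in the conjugation so that the relation holds exactly as stated—is the only delicate step; everything else is the same symmetry computation as Lemma~\ref{lem:same_conjugacy_xy}.
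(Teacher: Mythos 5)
Your core computation---arranging a representative of $g_{i,j}$ that carries the strip $\mathcal{B}_z(x_i,x_j)$ onto itself while exchanging its two ends, and observing that conjugating a shift by a strip-reversing homeomorphism yields the oppositely directed shift---is exactly the symmetry argument the paper leaves implicit: the paper gives this lemma no proof at all (it is introduced by ``Similarly''), and the proof of Lemma~\ref{lem:same_conjugacy_xy} is itself just the displayed identity. So in approach you agree with the paper, and you supply the details it omits.

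The problem is your treatment of the discrepancy that you yourself flag. The reflection computation gives the genuine conjugation identity $g_{i,j}\circ\eta_{x_i,x_j,z}\circ g_{i,j}^{-1}=\eta_{x_i,x_j,z}^{-1}$, equivalently $\eta_{x_i,x_j,z}=g_{i,j}\circ\eta_{x_i,x_j,z}^{-1}\circ g_{i,j}^{-1}$, whereas the stated lemma has $g_{i,j}$ on both sides; the two versions differ exactly by $g_{i,j}^{2}$. If $g_{i,j}$ is a bona fide half twist, then $g_{i,j}^{2}=T_c$ is a Dehn twist about an essential separating curve, a nontrivial (indeed infinite-order) mapping class, so $g_{i,j}\circ\eta^{-1}\circ g_{i,j}=\eta\, T_c\neq\eta$. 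Your proposed resolution---that $T_c$ commutes with $\eta$, has $\Phi_z(T_c)=0$, and is ``invisible to all the shift invariants''---does not close this gap: equality of mapping classes is not implied by agreement under the homomorphisms of Section~\ref{sec:cohomology}, and $\eta T_c$ and $\eta$ are distinct elements of $\Map(S)$. The repair is one of the two options you gesture at but do not carry out: either (a) prove and use the conjugation form $\eta=g_{i,j}\circ\eta^{-1}\circ g_{i,j}^{-1}$, which is all that the paper's later applications need (conjugate elements have equal images in any abelian quotient, giving $2[\eta]=0$ there); or (b) take $g_{i,j}$ to be an honest involution exchanging $K_i$ and $K_j$ and preserving the strip---for instance a $\pi$-rotation of the kind the paper uses for $\sigma$ and $\tau$ in the Jacob's ladder remark---so that $g_{i,j}=g_{i,j}^{-1}$ as a mapping class and your computation yields the statement verbatim. (The same imprecision is present in the paper's own formulation, since its chosen representative is the half twist in $L$; but a reviewer's observation that a statement needs repair is not the same as a proof of it.) As written, your argument establishes the lemma only up to the boundary twist $T_c$.
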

\begin{lem}\label{lem:parity_map}
	For any $f \in \Map(S)$, there exists $g$, a finite product of half twists, so that $fg \in \FMap(S)$.
\end{lem}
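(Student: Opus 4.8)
The plan is to use the forgetful action of $\Map(S)$ on the set of maximal ends, together with the fact that the half twists $g_{i,j}$ realize transpositions on the ends of type $x$. First I would note that in the present setting the target group of the relevant forgetful map is a \emph{finite} symmetric group. Indeed, since $E(S)$ is countable, every equivalence class $E(x)$ of a maximal end is finite: by Proposition \ref{prop:mannrafi_prop4.7} each $E(x)$ is either finite or homeomorphic to a Cantor set, and a Cantor set is uncountable. Hence $F = \mathcal{M}(S) = \{x_1, \dots, x_n, y_1, \dots, y_m\}$, and $\FMap(S)$ is exactly the kernel of the continuous forgetful homomorphism $\pi \co \Map(S) \to \Homeo(\mathcal{M}(S))$ recording how a mapping class permutes the maximal ends.

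Next I would analyze $\pi(f)$ for an arbitrary $f \in \Map(S)$. Any homeomorphism preserves the preorder $\preccurlyeq$ on $E(S)$, hence the $\sim$-type of each end, so $\pi(f)$ permutes $\mathcal{M}(S)$ within each type. By the standing assumption inherited from Theorem \ref{thm:main_obstruction}, the ends $y_1, \dots, y_m$ are pairwise of distinct type and each is the unique maximal end of its type; therefore $\pi(f)$ fixes every $y_j$. The only remaining freedom is a permutation $\sigma := \pi(f)\vert_{\{x_1, \dots, x_n\}} \in S_n$ of the type-$x$ maximal ends.

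Finally I would correct $\sigma$ by half twists. By the decomposition in Proposition \ref{prop:prop5.4_mannrafi} and the choice of $g_{i,j}$ as a half twist supported in the sphere-with-boundary piece $L$, the map $g_{i,j}$ exchanges $x_i$ and $x_j$ and fixes every other maximal end, so $\pi(g_{i,j})$ is precisely the transposition $(i\ j)$ on $\{x_1, \dots, x_n\}$ and the identity elsewhere. Since transpositions generate $S_n$, I would write $\sigma^{-1}$ as a product of transpositions and let $g$ be the corresponding finite product of half twists, so that $\pi(g) = \sigma^{-1}$ and $\pi(g)$ still fixes each $y_j$. Because $\pi$ is a homomorphism, $\pi(fg) = \sigma\sigma^{-1} = \mathrm{id}$, whence $fg \in \ker \pi = \FMap(S)$.

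The argument is essentially bookkeeping, and the two points that need care are both already available: the identification $F = \mathcal{M}(S)$, which is where countability of the end space is used, and the verification that $\pi(g_{i,j})$ is exactly the transposition $(i\ j)$ fixing all remaining maximal ends, which follows from supporting $g_{i,j}$ inside $L$ as in Section \ref{sec:parity}.
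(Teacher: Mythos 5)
Your proposal is correct and takes essentially the same route as the paper: the paper's proof is just the two sentences "the image of $f$ under the forgetful map gives a permutation on the maximal ends; since the permutation is realized as a finite product of half twists, we are done," and your argument fills in exactly the details that are implicit there (finiteness of $\mathcal{M}(S)$ via Proposition \ref{prop:mannrafi_prop4.7}, type-preservation forcing the $y_j$ to be fixed, and the half twists $g_{i,j}$ generating the symmetric group on $\{x_1,\dots,x_n\}$).
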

\begin{proof}
	The image of $f$ under the forgetful map will give a permutation on the maximal ends.
	Since the permutation is realized as a finite product of half twists, we are done.
\end{proof}

We now ready to prove Theorem \ref{thm:thmalpha_countable}.

\begin{thm}\label{thm:Map_gen}
	Suppose $S$ is an infinite type surface and $\Map(S)$ is CB generated.
	Then $S$ is uniquely self-similar if and only if $\Map(S)$ is topologically normally generated.
\end{thm}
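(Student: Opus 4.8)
The plan is to prove the two implications separately. For the forward direction, if $S$ is uniquely self-similar then $\mathcal{M}(S)$ is a single end, and by Lanier--Vlamis \cite{lanier2022Rokhlin} the group $\Map(S)$ has the Rokhlin property, i.e.\ it contains a dense conjugacy class. Since the normal closure of an element contains its entire conjugacy class, such an element topologically normally generates $\Map(S)$; this direction uses neither countability nor CB generation.

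For the converse I would argue the contrapositive: assuming $S$ is \emph{not} uniquely self-similar --- equivalently, $\mathcal{M}(S)$ contains at least two ends --- I will produce a continuous surjection from $\Map(S)$ onto a non-cyclic abelian group and conclude by Lemma \ref{lem:vlamis_obs}. Under the standing hypotheses of this section ($S$ tame, $E(S)$ countable, $\Map(S)$ CB generated) there are finitely many types of maximal ends, each class $E(x)$ is finite by Proposition \ref{prop:mannrafi_prop4.7}, and $S$ decomposes as in Proposition \ref{prop:prop5.4_mannrafi} into uniquely self-similar pieces $K_1,\dots,K_n$ of a possibly repeated maximal type $x$, pieces $K_1',\dots,K_m'$ of pairwise distinct types, and a sphere $L$ carrying the half twists $g_{i,j}$. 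By Theorem \ref{thm:main_obstruction} I may assume at most one type is repeated, so the analysis splits into two regimes.

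If no maximal type is repeated (there are at least two, all distinct), then no half twists are available and the forgetful map to the finite set of maximal ends is trivial, so $\FMap(S)=\Map(S)$ and Proposition \ref{prop:max_2_types_FMap} already yields a continuous surjection onto $\Z^2$. The substantive case is when some type $x$ occurs with multiplicity $n\ge 2$. Here I build two continuous homomorphisms to $\Z/2\Z$. The first is the sign $\sgn$ of the permutation that $f\in\Map(S)$ induces on the $n$ ends of type $x$; this is continuous because the forgetful map to $\Homeo$ of that finite set is continuous. The second is the reduction modulo $2$ of the length function $\Phi_z$ attached to a shift $\eta=\eta_{x_1,x_2,z}$ between two copies of $x$, which I denote $\overline{\Phi}$. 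One then checks $\eta\mapsto(1,0)$ while $g_{1,2}\mapsto(0,1)$: indeed $\eta$ fixes all maximal ends so $\sgn(\eta)=0$ and $\overline\Phi(\eta)=1$, whereas $g_{1,2}^{\,2}$ is a Dehn twist with $\Phi_z=0$ by Proposition \ref{prop:properties_of_Phi}(3), giving $\overline\Phi(g_{1,2})=0$. Hence $(\overline{\Phi},\sgn)$ surjects onto $(\Z/2\Z)^2$, which is non-cyclic abelian.

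The main obstacle is that the half twists both permute and invert the shifts, so a $\Phi_z$-homomorphism defined on $\FMap(S)$ does not obviously descend to all of $\Map(S)$. This is exactly what the conjugacy relations of this section are designed to overcome: Lemma \ref{lem:same_conjugacy_xx} gives $\Phi_z(g\eta g^{-1})=-\Phi_z(\eta)$, so reducing modulo $2$ turns $\Phi_z$ into a swap-invariant quantity, while Lemma \ref{lem:parity_map} lets me extend it from $\FMap(S)$ to $\Map(S)$ by first correcting $f$ by a finite product of half twists into $\FMap(S)$. The delicate point on which I expect to spend the most care is the well-definedness of $\overline{\Phi}$: two such corrections differ by a half-twist product lying in $\FMap(S)$, and I must verify that any such product has even $\Phi_z$, which I would deduce by combining Lemmas \ref{lem:same_conjugacy_xy} and \ref{lem:same_conjugacy_xx} with the vanishing of $\Phi_z$ on compactly supported classes from Proposition \ref{prop:properties_of_Phi}(3).
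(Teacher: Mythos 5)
Your forward direction (the Rokhlin property of uniquely self-similar surfaces) and your first regime (all maximal types distinct, so $\FMap(S)=\Map(S)$ and Proposition \ref{prop:max_2_types_FMap} finishes) coincide with the paper's proof. Both genuine problems sit in your second regime, where a type $x$ is repeated $n\geq 2$ times. Your construction presupposes a shift $\eta_{x_1,x_2,z}$ between two copies of $x$, i.e.\ that $E_{cp}(x_1,x_2)\neq\emptyset$ or that a handle shift between them exists. That is guaranteed only in the paper's Case 2, where \emph{all} maximal ends have type $x$: there $x$ cannot be a puncture (an infinite type surface always has a maximal end accumulated by genus or by other ends), so $Im(x)$ contains an end type or a handle. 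But when other maximal types coexist with the repeated one, $x$ may be an isolated planar end with $Im(x)=\emptyset$. For instance, the Loch Ness monster with two punctures is tame, has countable end space and CB generated mapping class group, and its maximal ends are the genus end together with the two punctures; there is no generalized shift and no handle shift between the two punctures, so your $\overline{\Phi}$ is simply undefined. This is exactly why the paper's Case 1 never uses a shift between the $x_i$'s: it pairs the parity map $\mathcal{P}$ either with a flux $\Phi_{y'}$ of shifts between two of the \emph{other} types, or with $\widehat{\Phi}_{z'}$, which measures the net flow of $z'\in E_{cp}(x_i,y_j)$ into the piece $K_j'$ and is invariant under the half twists because those are supported away from $K_j'$.

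Second, even when the shift does exist, reducing a \emph{single} pair's $\Phi_z$ modulo $2$ only works for $n=2$. Your invariance argument, via Lemma \ref{lem:same_conjugacy_xx}, covers only the swap of $x_1$ and $x_2$ (a sign flip); it says nothing about half twists involving a third copy. Concretely, for $n\geq 3$ choose $\gamma$ separating $x_1$ from $x_2,x_3$. Then $\Phi_z(\eta_{x_2,x_3,z})=0$, while its $g_{1,2}$-conjugate $\eta_{x_1,x_3,z}$ has $\Phi_z=\pm1$; so $\overline{\Phi}$ is not conjugation-invariant mod $2$, and $(\overline{\Phi},\sgn)$ fails to be a homomorphism already on $f_1=g_{1,2}$, $f_2=\eta_{x_2,x_3,z}$ (your well-definedness check addresses the choice of correcting half twists, but the homomorphism property needs full conjugation invariance of $\overline{\Phi}$ on $\FMap(S)$). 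The paper repairs exactly this point: $\widetilde{\Theta_z}$ sums \emph{all} $n-1$ coordinates $\Phi_{z_i}$ over the pairs $(A_0,A_i)$ of Theorem \ref{thm:Theta_z} before reducing mod $2$, so conjugation by any half twist merely permutes coordinates and flips signs, which the total sum mod $2$ cannot detect (Proposition \ref{prop:parity_map}). In summary, your outline is correct for $n=2$ with a non-puncture repeated type, but the general case needs the paper's two extra ingredients: the summed invariant $\widetilde{\Theta_z}$, and homomorphisms built from shifts involving the other maximal types when the repeated type admits none.
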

\begin{proof}
	We will use the notation as we discussed at the beginning of this Section \ref{sec:parity}.
	If $S$ is uniquely self-similar, then $S$ has a Rokhlin property.
	So we are done.
	
	For the converse, we split into $2$ cases.
	\begin{enumerate}
		\item (Case $1$) There are more than $1$ maximal type in $\mathcal{M}(S)$.

		It suffices to show that $n \geq 2$, otherwise $\Map(S) = \FMap(S)$ and it reduces to the Proposition \ref{prop:max_2_types_FMap}.
		Suppose there is $y \in  \{y_1, \cdots, y_m\}$ that $E_{cp}(x_i, y)$ is empty.
		Then there must be at least one $y' \in \{y_1, \cdots, y_m\} - \{y\}$ such that $E_{cp}(y, y_j)$ is nonempty, unless $S$ does not satisfy the small zoom condition.
		Construct the homomorphism from $\Map(S)$ to $(\mathbb{Z} / 2\mathbb{Z}) \times \mathbb{Z} $ as follows.
		\[
			^\forall f \in \Map(S),\quad f \mapsto (\mathcal{P}(f), \Phi_{y'}(f))
		\]
		$\mathcal{P}$ measures the parity of the permutation on $\{x_1, \cdots, x_n\}$, which sends odd permutations to $1$ and even permutations to $0$, induced by the forgetful map $\Map(S) \twoheadrightarrow S_n \twoheadrightarrow \mathbb{Z} / 2\mathbb{Z}$.
		
		Now suppose that every $y \in \{y_1, \cdots, y_m\}$ satisfies that $E_{cp}(x_i, y)$ is nonempty.
		Choose $y = y_j$ and let $z' \in E_{cp}(x_i, y)$.
		By Lemma \ref{lem:same_conjugacy_xy}, $\eta_{x_i, y, z'}$ and $\eta_{x_j, y, z'}$ must have the same image under any group homomorphism to an abelian group.
		Again, we construct the homomorphism from $\Map(S)$ to $(\mathbb{Z} / 2\mathbb{Z}) \times \mathbb{Z} $ as follows.
		\[
			^\forall f \in \Map(S),\quad f \mapsto (\mathcal{P}(f), \widehat{\Phi}_{z'}(f))
		\]
		$\mathcal{P}$ measures the parity as above, and $\widehat{\Phi}_{z'}$ measures the number by which $z'$ is either shifted from or shifted to the subsurface $L \cup (\cup_{1 \leq i \leq n} K_i)$ towards $K'_j$.
		In both cases, $\Map(S)$ is not normally generated by Lemma \ref{lem:vlamis_obs}, 
		
		\item (Case $2$) All maximal ends in $S$ are of the same type $x$.
		
		Let $i, j$ be distinct integers in $ \{1, \cdots, n\}$.
		Choose $z \in Im(x)$ and let $\eta$ be a shift map of $z$ from $x_i$ to $x_j$.
		If $Im(x)$ only contains a handle, then $x$ must be a type of ends which is accumulated only by genus and take the shift map $\eta$ as a handle shift from $x_i$ to $x_j$.
		Recall that $\Theta_z : \FMap(S) \to \mathbb{Z}^{n-1}$.
		Define $\widetilde{\Theta_z}: \FMap(S) \to \mathbb{Z}^{n-1} \to \mathbb{Z} \to \mathbb{Z} / 2\mathbb{Z}$,
		where the second map is the sum of all entries in $(n-1)$ tuple and the last map is a usual quotient map by $2\mathbb{Z}$.
		
		Now we construct the homomorphism.
		Let $f \in \Map(S)$ and define $\widetilde{\Theta}: \Map(S) \to (\mathbb{Z} / 2\mathbb{Z}) \times (\mathbb{Z} / 2\mathbb{Z})$ as follows.
		\[
			\widetilde{\Theta}(f) = \big( \widetilde{\Theta_z}(fg), sgn(g) \big)
		\]
		where $g \in \Map(L)$ such that $fg \in \FMap(S)$.
		This is a surjective group homomorphism, hence again by Lemma \ref{lem:vlamis_obs} we are done.
	\end{enumerate}
\end{proof}
We will prove that the map is in fact well-defined, surjective group homomorphism.
Since $\widetilde{\Theta}$ is surjective, $\Map(S)$ is not normally generated by Lemma \ref{lem:vlamis_obs}.
The remaining part of the proof is the well-definedness of $\widetilde{\Theta}$.
\begin{prop}\label{prop:parity_map}
	$\widetilde{\Theta}$ is well-defined surjective homomorphism.
\end{prop}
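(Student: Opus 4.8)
The plan is to verify three things in turn: that the first coordinate $\widetilde{\Theta_z}(fg)$ is independent of the auxiliary choice of $g$, that $\widetilde{\Theta}$ respects products, and that it surjects onto $(\mathbb{Z}/2\mathbb{Z})\times(\mathbb{Z}/2\mathbb{Z})$. Throughout I will use that $\FMap(S)$ is normal in $\Map(S)$ (it is a kernel), that $\widetilde{\Theta_z}$ is a genuine homomorphism on $\FMap(S)$ (being the composite of the homomorphism $\Theta_z$ of Theorem \ref{thm:Theta_z} with the coordinate sum and reduction mod $2$), and that, by Lemma \ref{lem:parity_map}, a product of half twists $g\in\Map(L)$ realizing the inverse of the end permutation of $f$ always exists.

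For well-definedness, suppose $g,g'\in\Map(L)$ both satisfy $fg,fg'\in\FMap(S)$. Since membership in $\FMap(S)$ is exactly the condition that the induced permutation of the maximal ends $x_1,\dots,x_n$ be trivial, $g$ and $g'$ induce the \emph{same} permutation $\sigma(f)^{-1}$, so $\sgn(g)=\sgn(g')$ and the second coordinate is unambiguous. For the first coordinate, write $fg'=(fg)(g^{-1}g')$; here $g^{-1}g'\in\FMap(S)$ is a product of half twists inducing the trivial permutation, hence maps each $K_i$ to itself setwise and shifts no end of type $z$ across any of the separating curves defining the $\Phi_{z_i}$. Thus $\Theta_z(g^{-1}g')=0$, and since $\widetilde{\Theta_z}$ is a homomorphism on $\FMap(S)$ we conclude $\widetilde{\Theta_z}(fg')=\widetilde{\Theta_z}(fg)$.

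For the homomorphism property, given $f_1,f_2$ with end permutations $\sigma_1,\sigma_2$ and chosen correctors $g_1,g_2$, I take $g=g_2g_1$ as the corrector for $f_1f_2$ (it realizes $\sigma_2^{-1}\sigma_1^{-1}$, which is legitimate by the previous paragraph). Writing $h_i=f_ig_i\in\FMap(S)$ and rearranging,
\[
f_1f_2\,g_2g_1=(f_1g_1)\big(g_1^{-1}(f_2g_2)g_1\big)=h_1\cdot\big(g_1^{-1}h_2g_1\big),
\]
so that $\widetilde{\Theta_z}(f_1f_2g)=\widetilde{\Theta_z}(h_1)+\widetilde{\Theta_z}(g_1^{-1}h_2g_1)$. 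The second coordinate is immediately multiplicative because $\sgn$ is a homomorphism $S_n\to\mathbb{Z}/2\mathbb{Z}$, so everything reduces to the single claim that conjugation by a product of half twists preserves $\widetilde{\Theta_z}$, i.e.\ $\widetilde{\Theta_z}(g_1^{-1}h_2g_1)=\widetilde{\Theta_z}(h_2)$ in $\mathbb{Z}/2\mathbb{Z}$. By Lemmas \ref{lem:same_conjugacy_xy} and \ref{lem:same_conjugacy_xx}, conjugating a shift $\eta_{x_k,x_l,z}$ by a half twist relabels its two maximal ends by the corresponding transposition and, when that transposition is exactly $(k\,l)$, replaces the shift by its inverse. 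For half twists that do \emph{not} move the distinguished base end $x_0$ used to define the coordinates $\Phi_{0i}$, this says conjugation acts on the target $\mathbb{Z}^{n-1}$ by permuting and negating coordinates, operations that leave the sum of entries unchanged modulo $2$, so invariance is clear. Surjectivity is then easy: a single shift $\eta=\eta_{x_i,x_j,z}$ lies in $\FMap(S)$ (corrector $g=\mathrm{id}$, $\sgn=0$) and has exactly one nonzero coordinate under $\Theta_z$, so $\widetilde{\Theta}(\eta)=(1,0)$, while a half twist $g_{i,j}$ has $\sgn=1$, giving $\widetilde{\Theta}(g_{i,j})=(\ast,1)$; these two images generate $(\mathbb{Z}/2\mathbb{Z})\times(\mathbb{Z}/2\mathbb{Z})$.

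I expect the genuine obstacle to be precisely the remaining conjugation case: a half twist that moves the base end $x_0$. Then $\Theta_z$ is computed relative to curves one of whose sides is being swapped, and the induced change of coordinates is no longer a bare signed permutation of the $\Phi_{0i}$ but mixes them. Here one must re-express the moved coordinates using the cocycle relations $\Phi_{ij}+\Phi_{jk}=\Phi_{ik}$ and $\Phi_{ij}=-\Phi_{ji}$ among the shifts and then check carefully that the total sum of the $\Phi_{0i}$ is still preserved modulo $2$; this is the step that genuinely requires the parity reduction and the relation of Lemma \ref{lem:same_conjugacy_xx}, and it is where the argument must be carried out with the most care.
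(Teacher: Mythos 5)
Your proposal mirrors the paper's own proof in structure: well-definedness via the finitely supported difference $g^{-1}g'$, reduction of the homomorphism property to conjugation-invariance of $\widetilde{\Theta_z}$, and surjectivity via a shift and a half twist; those three steps are carried out correctly and match the paper. The genuine gap is exactly the case you flag and then defer: invariance of $\widetilde{\Theta_z}$ under conjugation by a half twist that moves the base end $A_0$. That check is not merely delicate --- it fails whenever $n\geq 3$. For the splitting in Theorem \ref{thm:Theta_z} to give $\Theta_z(\eta_i)=e_i$, each coordinate $\Phi_{z_i}$ must be measured across a curve whose $A_i$-side contains no other maximal end admitting $z$ as a predecessor; writing $F_j(f)$ for the net flux of $z$-type ends into the piece containing $A_j$, this forces $\Phi_{z_i}=F_i$, subject to the single relation $\sum_{j=0}^{n-1}F_j=0$. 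Since a flux homomorphism depends only on how the curve partitions the maximal ends (up to finitely many $z$-ends), conjugating by the half twist $g_{0,m}$ replaces the coordinate $F_m$ by $F_0=-\sum_j F_j$ and fixes the others, so the sum of entries changes by $F_0-F_m$, which is odd in general. Concretely, take $n=3$: the shift $\eta_{A_0,A_2,z}$ has $(F_1,F_2)=(0,1)$, hence $\widetilde{\Theta_z}=1$, while its conjugate by $g_{0,1}$, namely $\eta_{A_1,A_2,z}$, has $(F_1,F_2)=(-1,1)$, hence $\widetilde{\Theta_z}=0$. Both elements lie in $\FMap(S)$ and are conjugate in $\Map(S)$, so no map restricting to $\bigl(\widetilde{\Theta_z},0\bigr)$ on $\FMap(S)$ can be a homomorphism into an abelian group.

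Moreover, the gap cannot be closed by a cleverer choice of curves or of linear combination: conjugation makes $S_n$ act on the flux lattice $V=\bigl\{(F_0,\dots,F_{n-1})\in\mathbb{Z}^n:\sum_j F_j=0\bigr\}$ by permuting coordinates, and for $n\geq 3$ there is no nonzero conjugation-invariant functional $V\to\mathbb{Z}/2\mathbb{Z}$: such a functional corresponds to a subset $T\subset\{0,\dots,n-1\}$ with $\sigma(T)\in\{T,T^{c}\}$ for every $\sigma\in S_n$, which forces $T$ to be trivial once $n\geq 3$. So any mod-$2$ functional of the fluxes that is constant on conjugacy classes vanishes identically, and the first coordinate of $\widetilde{\Theta}$ collapses; your construction (and the paper's) is valid precisely when $n=2$, where conjugation sends $F_1\mapsto F_0=-F_1\equiv F_1 \pmod 2$. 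For what it is worth, your instinct about where the danger lies is sharper than the published write-up: the paper's proof disposes of this case with the assertion that conjugation by half twists only permutes entries or changes their signs, which is true only for half twists fixing $A_0$. The step you isolated is precisely where the paper's argument breaks, and with it Proposition \ref{prop:parity_map} and Case 2 of Theorem \ref{thm:Map_gen} for surfaces with three or more maximal ends of the same type.
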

\begin{proof}
	Let $f \in \Map(S)$.
	We first show that $\widetilde{\Theta_z}$ is invariant under conjugation.
	Conjugation by an elements in $\FMap(S)$ does not change the value, by its definition.
	Hence, the only conjugation which may invoke any differences is by half twists.
	Conjugation by half twists differs value to either permute entries in the $(n-1)$ tuple, or change signs of some entries.
	Therefore, taking the sum and quotient by $2\mathbb{Z}$ is invariant.
	
	Suppose there are two $g_1, g_2$ such that both $fg_1, fg_2$ are in $\FMap(S)$. 
	It is obvious that $sgn(g_1) = sgn(g_2)$.
	Also, $fg_2 = fg_1 g_1^{-1}g_2$ and the support of $g_1^{-1}g_2$ lies in $L$.
	Hence the choice of half twists does not affect the value of $\widetilde{\Theta_z}$.
	
	By construction, $\widetilde{\Theta_z}$ is a group homomorphism.
	Now, consider $f_1, f_2 \in \Map(S)$.
	Then there are $g_1, g_2$ such that $f_ig_i \in \FMap(S)$ for $i = 1, 2$.
	Also, choose $g$ so that $f_1f_2g \in \FMap(S)$.
	It is obvious that $sgn(g) = sgn(g_1)+sgn(g_2) \mod 2\mathbb{Z}$.
	\begin{align*}
		\widetilde{\Theta_z}(f_1f_2g) &= \widetilde{\Theta_z}(f_1g_1) + \widetilde{\Theta_z}(g_1^{-1}f_2g_2g_1) + \widetilde{\Theta_z}(g_1^{-1}g_2^{-1}g)\\
		&= \widetilde{\Theta_z}(f_1g_1) + \widetilde{\Theta_z}(f_2g_2). 
	\end{align*}
	The term $\widetilde{\Theta_z}(g_1^{-1}g_2^{-1}g)$ vanishes since it is supported on finitely bounded subsurface.
	Therefore $\widetilde{\Theta_z}$ is a group homomorphism.
	It is easy to check that $\widetilde{\Theta}$ is surjective.	
\end{proof}

\begin{rmk}
	We remark that just capturing the parity of the shift and the sign of permutation on the maximal end does not work in general.
	For example, Let $S$ is a Jacob ladder surface and let $\sigma$ be an involution that is a $\pi$-rotation fixing one separating curve, and $\tau$ be another involution that is again a $\pi$-rotation fixing a separating curve pair, as in Figure \ref{fig:parity_remark}.
	If we merely measure how many genus are shifted and the sign of the permutation on the ends, then both $\sigma$ and $\tau$ are mapped to $(0,1)$.
	But $\sigma \tau$ is a handle shift, hence $\sigma \tau$ is mapped to $(1,0)$ and it fails to be a group homomorphism.
	However, if we designate the half twist in advance (in this case, the only half twist $g$ is $\sigma$), then $\widetilde{\Theta}(\tau) = (1,1)$.
\end{rmk}

\begin{figure}[h]
	\centering
	\includegraphics[width=.7\textwidth]{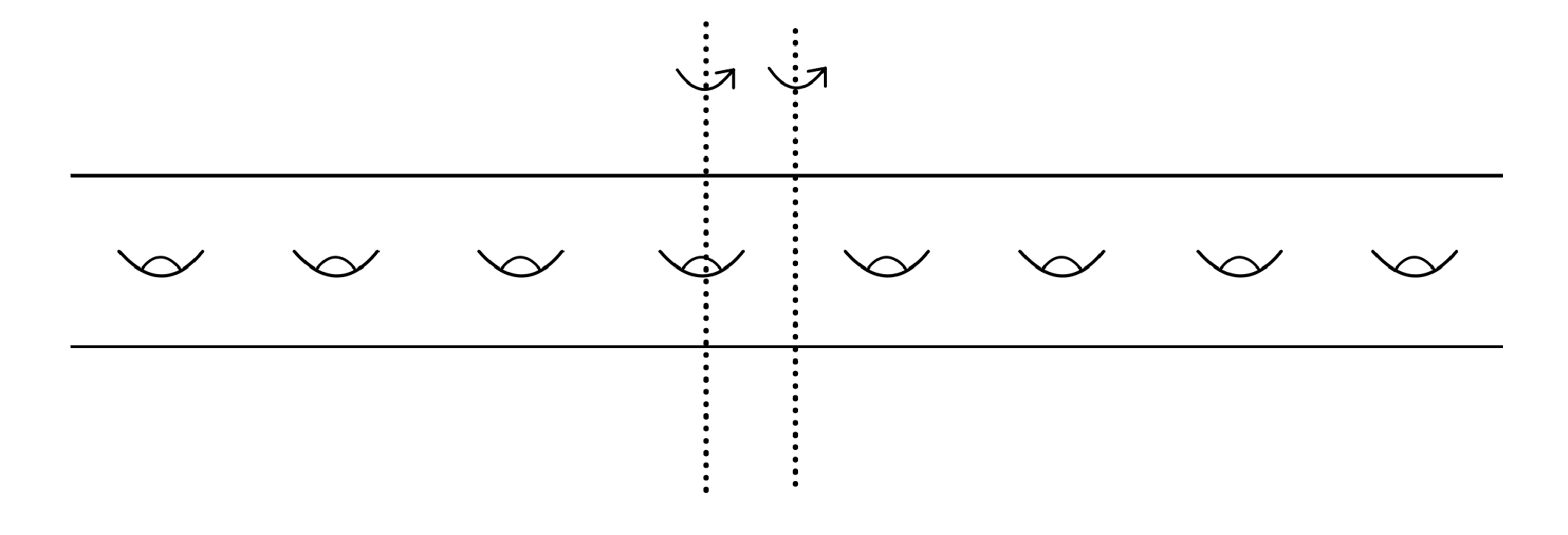}
	\caption{Each $\sigma$(right axis) and $\tau$(left axis) is the $\pi$ rotation along the axis in the middle, respectively.}
	\label{fig:parity_remark}
\end{figure}

\section{Uncountable end space}\label{sec:uncountable}

In this section we provide an example of surfaces with \emph{uncountable end space} which admit topologically normally generated mapping class group.

The goal of the section is to prove the following.
\begin{thm}\label{thm:uncountable_case_ex}
	Suppose $S$ is a tame, infinite type surface and satisfies the following.
	\begin{enumerate}
		\item $E(S)$ is uncountable and $\Map(S)$ is CB generated.
		\item $S$ is a connected sum of $S_{p}$ and $S_{u}$, where $S_p$ is perfectly self-similar and $S_u$ is uniquely self-similar with unique maximal end $x$, and $|Im(x)| \leq 1$. 
	\end{enumerate}
	Then $\Map(S)$ is topologically normally generated.
\end{thm}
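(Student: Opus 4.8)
The plan is to produce an element of $\Map(S)$ whose topological normal closure is all of $\Map(S)$, exploiting the connected sum structure $S = S_p \# S_u$. The natural candidate is a single element that simultaneously encodes a ``strong dilatation'' on the perfectly self-similar piece $S_p$ and is compatible with the self-similar structure of $S_u$. Since $S_p$ is perfectly self-similar, by the results of Vlamis recalled in the introduction, $\Map(S_p)$ is normally generated by a single strong dilatation map $f_p$; the goal is to upgrade such an $f_p$ (extended by the identity across the connecting region) to a topological normal generator of the full $\Map(S)$.

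\textbf{Key steps.}
First I would fix a decomposition of $S$ along a separating curve $c$ into the perfectly self-similar part (containing the Cantor set of maximal ends) and the uniquely self-similar part $S_u$ whose single maximal end $x$ satisfies $|Im(x)| \leq 1$. Second, I would choose $f$ to be a strong dilatation/self-similarity map supported (up to the connecting region) on $S_p$, and verify directly that $\topnorm{f}$ contains $\overline{\Map_c(S)}$: here the perfect self-similarity lets one conjugate $f$ by homeomorphisms that ``zoom'' any compactly supported mapping class into a fundamental domain, run a Mann--Rafi-style small-zoom argument, and use an Alexander-swindle argument analogous to Proposition \ref{prop:swindle} to absorb the resulting infinite products. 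Third, I would show that $\topnorm{f}$ also captures the behaviour on $S_u$: because $x$ is the \emph{unique} maximal end with at most one immediate predecessor type, the hypothesis $|Im(x)| \leq 1$ prevents the appearance of two independent shift-type cohomology classes, so the obstruction of Lemma \ref{lem:vlamis_obs} cannot arise, and a copy of $S_u$ can be absorbed into the self-similar zooming on $S_p$ after connect-summing. Finally I would conclude that since $\topnorm{f}$ is dense and contains enough shifts, half-twists, and compactly supported maps to generate the CB generating set of Proposition \ref{prop:prop5.4_mannrafi}, it is dense in $\Map(S)$.

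\textbf{Main obstacle.}
The hard part will be controlling the interaction between the two pieces across the connecting region, i.e. showing that a single $f$ can realize both the dilatation on $S_p$ \emph{and} the shift-type behaviour needed on $S_u$ without two independent abelian quotients appearing (which would obstruct normal generation via Lemma \ref{lem:vlamis_obs}). This is precisely where the hypotheses ``$x$ is the unique maximal end of $S_u$'' and ``$|Im(x)| \leq 1$'' must be used in an essential way: they guarantee that the only potential cohomological obstruction coming from $S_u$ is a single $\mathbb{Z}$-valued class, which can be folded into (rather than being independent of) the dynamics already contained in $\topnorm{f}$ via the perfect self-similarity of $S_p$. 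Making the swindle rigorous in the uncountable-ends setting, where the relevant subsurfaces accumulate on a Cantor set rather than a countable set of ends, is the technically delicate point and will require a careful choice of the zooming homeomorphisms and a verification that the resulting infinite product converges in the compact-open topology.
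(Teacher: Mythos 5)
Your proposal has a genuine gap, and it is fatal to the specific choice of generator. You propose to take $f$ to be a dilatation-type map supported (up to the compact connecting region) on $S_p$, extended by the identity over $S_u$. But when $|Im(x)| = 1$, say $Im(x) = \{z\}$ (or the handle, in which case replace the $z$-shift by a handle shift below), there is a continuous flux homomorphism $\Phi_z \co \Map(S) \to \mathbb{Z}$ that counts the net number of $z$-type ends crossing the boundary of a neighborhood of $x$: it is well defined on all of $\Map(S)$ because $x$, being the unique isolated maximal end, is fixed by every mapping class, it is continuous because its kernel contains a basic identity neighborhood, and it takes the value $1$ on the generalized shift $\eta_z$. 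This is exactly the kind of flux map the paper builds in Section \ref{sec:cohomology} and reuses in Corollary \ref{cor:lower_bound_of_normal_gens}, and it is why the paper remarks that \emph{two} such shifts out of $x$ would give a $\mathbb{Z}^2$ quotient obstructing normal generation. Now your $f$ is the identity on a neighborhood of the end $x$, hence $\Phi_z(f) = 0$; since $\mathbb{Z}$ is abelian, $\Phi_z$ vanishes on every conjugate of $f$, hence on the whole normal closure, hence (by continuity) on $\topnorm{f}$. So $\topnorm{f} \subset \ker \Phi_z$ is a proper closed subgroup and cannot contain $\eta_z$, which is an indispensable part of the Mann--Rafi CB generating set of Proposition \ref{prop:prop5.4_mannrafi}. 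Your remark that the single $\mathbb{Z}$-valued class ``can be folded into the dynamics already contained in $\topnorm{f}$ via the perfect self-similarity of $S_p$'' is precisely the step that fails: no conjugation or limit can create nonzero flux out of $x$ from an element with zero flux. The correct reading of the hypothesis $|Im(x)| \le 1$ is not that the obstruction disappears for \emph{every} candidate, but that it forces any successful normal generator to have flux $\pm 1$.

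The paper's proof is built around exactly this point. It decomposes $S$ into $S_u$ and \emph{three} homeomorphic copies $S_1, S_2, S_3$ of $S_p$ glued to a $4$-holed sphere $K$, and takes as generator the \emph{product} $\eta_z \varphi$, where $\eta_z$ is the generalized shift from $x$ into the perfectly self-similar part (carrying the required unit flux) and $\varphi$ is an $H$-translation map of a half-space in the sense of Malestein--Tao. Since the two factors can be arranged to commute with disjointly controlled supports, a swindle shows $\topnorm{\eta_z\varphi}$ contains both $\eta_z$ and $\varphi$ separately (Lemma \ref{lem:uncountable_section,etavar}); the half-space machinery then yields the half twists among $S_1, S_2, S_3$ (Lemma \ref{lem:uncountable_section,half_twist}) and $\Map(K)$ (Lemma \ref{lem:uncountable_section,Map(K)}), and a further swindle with $\eta_z$ against the dense subgroup of finitely bounded classes of $\Map(S_u, \partial S_u)$ coming from unique self-similarity (Lemma \ref{lem:uncountable_Map(Su)}) captures the remaining factor of the identity neighborhood $\mathcal{V}_K$. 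If you want to salvage your approach, the minimal repair is to replace your $f$ by such a product of a dilatation/translation on $S_p$ with a shift out of $x$, and then prove the analogue of Lemma \ref{lem:uncountable_section,etavar} that the normal closure of the product recovers each factor.
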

Note that since we can choose any kind of perfectly self-similar tame surface, there are uncountably many examples satisfying the conditions in Theorem \ref{thm:uncountable_case_ex}.

\begin{figure}[h]
	\centering
	\includegraphics[width=.4\textwidth]{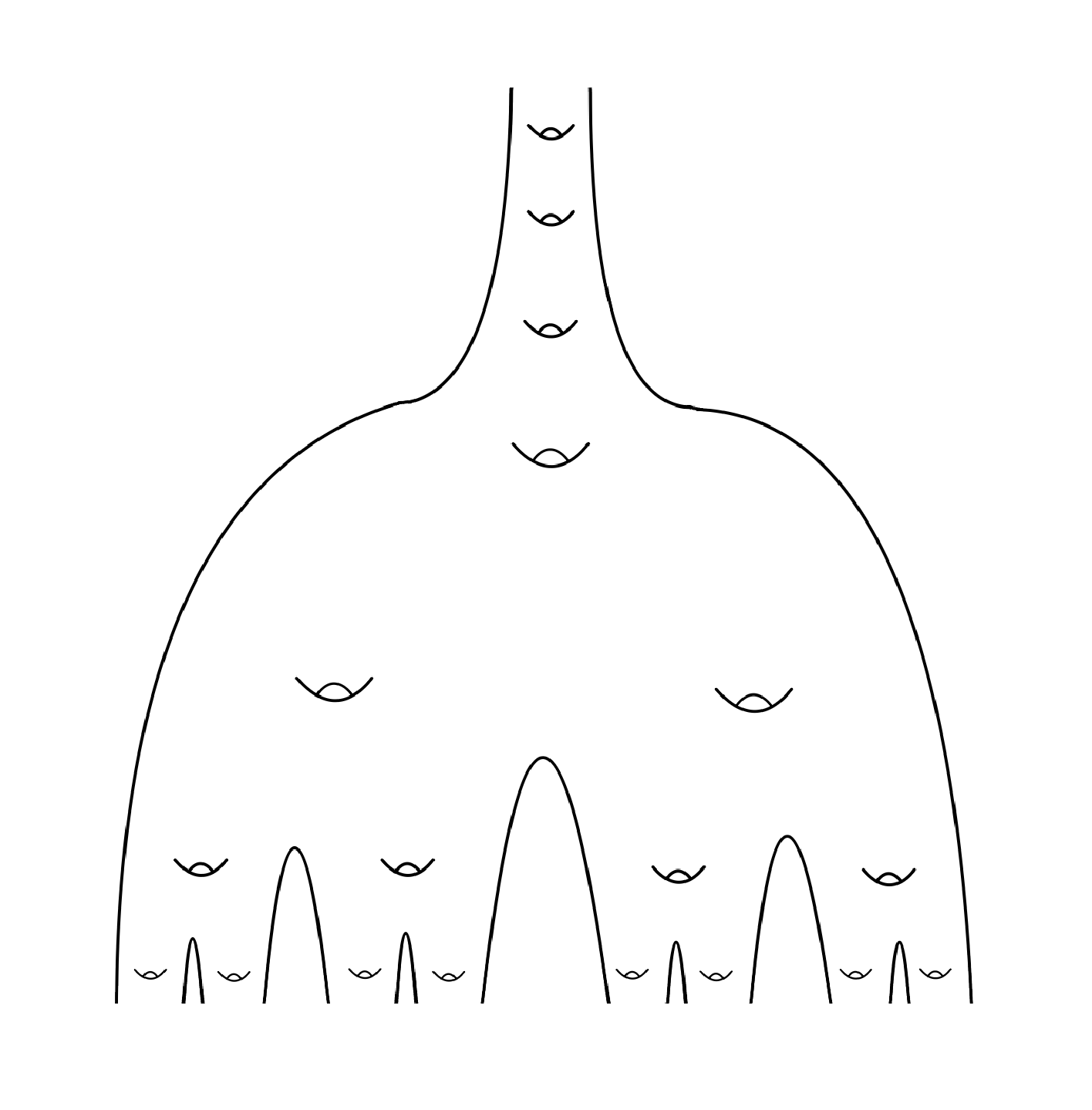}
	\caption{Example of surfaces that satisfies the condition in Theorem \ref{thm:uncountable_case_ex}.}
	\label{fig:blooming_cantor_with_one_iso}
\end{figure}

\subsection{Half-spaces}
We first introduce the half-space. 
In \cite{malestein2024selfsimilar}, the half-space was discussed only in the perfectly self-similar surface.
However, their argument works well on the surface if it is a connected sum with a perfectly self-similar subsurface.
For more details about half-spaces, we refer to \cite{malestein2024selfsimilar}. 

Let $\Sigma$ be perfectly self-similar subsurface with one boundary in $S$.
Then $H\subset \Sigma$ is called a \emph{half-space} of $\Sigma$ if it satisfies the following.
\begin{enumerate}
	\item $H$ is closed subset of $\Sigma$ and $\partial H$ is connected and compact.
	\item $E(H)$ and $E(\overline{H}^c\cap \Sigma)$ both contain a maximal end of $E(\Sigma)$.
\end{enumerate}
By the nature of perfectly self-similar surface, it is possible to find countably many homeomorphic copies of $H$ in $\Sigma$ that are pairwise disjoint to one another.
\begin{lem}\label{lem:H-translation}
	Let $\Sigma$ be perfectly self-similar and $\{H_i\}_{i \in \mathbb{Z}}$ be a sequence of pairwise disjoint half-spaces.
	Then there is $\varphi \in \Homeo(\Sigma)$ such that $\varphi(H_i) = H_{i+1}$, $i \in \mathbb{Z}$.
\end{lem}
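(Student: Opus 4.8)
The plan is to build $\varphi$ by cutting $\Sigma$ along the boundary curves $\{\partial H_i\}_{i\in\mathbb{Z}}$, defining the shift separately on each half-space and on the common complementary region, and then gluing. Set $M := \overline{\Sigma \setminus \bigcup_i H_i} = \bigcap_i \overline{\Sigma \setminus H_i}$, so that $\Sigma = M \cup \bigcup_i H_i$, with the pieces meeting exactly along the pairwise disjoint separating circles $\partial H_i$. Any homeomorphism $\varphi$ with $\varphi(H_i)=H_{i+1}$ must satisfy $\varphi(\bigcup_i H_i)=\bigcup_i H_i$, hence $\varphi(M)=M$ while advancing the boundary labels $\partial H_i \mapsto \partial H_{i+1}$; conversely, compatible pieces glue to such a $\varphi$. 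First I would fix, for each $i$, an orientation-preserving homeomorphism $\beta_i \colon \partial H_i \to \partial H_{i+1}$ of boundary circles. The proof then reduces to producing (i) extensions $\psi_i \colon H_i \to H_{i+1}$ of $\beta_i$ over the half-spaces, and (ii) a self-homeomorphism $\psi_M$ of $M$ that advances its boundary circles and restricts to $\beta_i$ on each $\partial H_i$.

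For (i), I would invoke the structure of half-spaces in a perfectly self-similar surface from \cite{malestein2024selfsimilar}: every half-space is homeomorphic to $\Sigma$ with one boundary circle, so in particular all the $H_i$ are mutually homeomorphic and the matching of their invariants $(p,b,g,E,E^G)$ holds automatically. Given this, any boundary homeomorphism $\beta_i$ extends to a homeomorphism $\psi_i \colon H_i \to H_{i+1}$: choose any homeomorphism between them, correct its boundary restriction by the appropriate self-homeomorphism of the circle, and absorb the correction in a collar. This step is routine once the homeomorphism type of the $H_i$ is identified.

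The heart of the argument, and the step I expect to be the main obstacle, is (ii). I would first analyze the end space of $M$: it consists of the ends of $\Sigma$ lying in no $H_i$, together with the one or more ends to which the chain $\{\partial H_i\}$ accumulates as $i \to \pm\infty$ (such accumulation occurs since infinitely many disjoint $H_i$ must leave every compact set). The goal is to show that $M$, viewed as a surface carrying the $\mathbb{Z}$-indexed boundary circles $\{\partial H_i\}$, the original boundary $\partial\Sigma$, and this end space, admits a self-homeomorphism shifting the boundary labels by one and restricting to the prescribed $\beta_i$. I would establish this through a model: a ladder-type decomposition of $M$ into a bi-infinite chain of mutually homeomorphic blocks, one per boundary circle, interpolating between the accumulation ends. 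Perfect self-similarity is precisely what guarantees that these blocks can be taken homeomorphic and that the end data of $M$ is invariant under the index shift, so that the classification of infinite-type surfaces \cite{kerekjarto1923vorlesungen,richards1963classification} furnishes $\psi_M$ with the prescribed behavior on $\{\partial H_i\}$.

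Finally I would glue $\psi_M$ with the $\psi_i$ along the curves $\partial H_i$, where they agree by construction, to obtain a bijection $\varphi \colon \Sigma \to \Sigma$, and check that it is a homeomorphism. Continuity away from the accumulation ends is immediate from the local/finite nature of the gluing; the only delicate point is continuity and properness at the ends to which $\{H_i\}$ accumulates, which follows because the blocks of $M$ and the half-spaces $H_i$ both exit every compact set as $i \to \pm\infty$, so $\psi_M$ and the $\psi_i$ jointly respect the nested neighborhood bases of those ends. By construction $\varphi(H_i)=H_{i+1}$ for every $i$, as required.
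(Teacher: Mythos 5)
Your step (i) and your final gluing step are unproblematic: mutual homeomorphism of the $H_i$ follows from the displacement lemma (Lemma \ref{lem:half-space_displacable}, i.e.\ \cite[Lemma 3.10]{malestein2024selfsimilar}), and boundary corrections by collars are routine. The gap is exactly at your step (ii), and it is fatal. You assert that $M=\overline{\Sigma\setminus\bigcup_i H_i}$ admits a ladder decomposition into mutually homeomorphic blocks, one per boundary circle, and that ``perfect self-similarity is precisely what guarantees'' this and the shift-invariance of the end data of $M$. But perfect self-similarity is a property of $\Sigma$ alone; it puts no constraint whatsoever on how an \emph{arbitrary, given} pairwise disjoint family $\{H_i\}$ sits inside $\Sigma$. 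For instance, in the Cantor tree surface one can choose the $H_i$ so that leftover ends of $\Sigma$ lie between $\partial H_i$ and $\partial H_{i+1}$ for every $i\geq 0$ but for no $i<0$: then no decomposition of $M$ into homeomorphic blocks indexed by the circles exists (in that particular example a shift of $M$ does still exist, but not via your block argument).

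In fact no repair is possible, because the statement with the family quantified first is false. Take $\Sigma=S^2\setminus C$ with $C$ the middle-thirds Cantor set in $[0,1]$; this is perfectly self-similar, and any closed disk $D$ with $\partial D\cap C=\emptyset$ and $\emptyset\neq D\cap C\neq C$ yields a half-space $D\setminus C$. Fix distinct points $p,r,s,t,x_+\in C$ and choose pairwise disjoint such disks $H_i$, $i\in\mathbb{Z}$, with diameters tending to $0$, so that (as subsets of $S^2$) $H_i\to x_+$ as $i\to+\infty$, while along negative indices $H_{-10^j}\to p$, $H_{-(10^j-1)}\to r$, $H_{-(10^j+5)}\to p$, $H_{-(10^j+4)}\to s$, and all remaining $H_i\to t$. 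Every $\varphi\in\Homeo(\Sigma)$ extends to the Freudenthal compactification, which here is $S^2$. If $\varphi(H_i)=H_{i+1}$ for all $i$, then $H_{-10^j}\to p$ gives $\varphi(H_{-10^j})=H_{-(10^j-1)}\to \varphi(p)$, forcing $\varphi(p)=r$; the same computation along the indices $-(10^j+5)$ forces $\varphi(p)=s$, a contradiction since $r\neq s$. So the lemma is only correct in the existential reading, which is also the only way it is used later (Lemmas \ref{lem:uncountable_section,half_twist} and \ref{lem:lemma3.7_malestein_tao}): given a half-space $H$, one first produces a homeomorphism $\varphi$ --- for example from a bi-infinite connected-sum decomposition of $\Sigma$ into copies of itself, obtained by iterating $\Sigma\cong\Sigma\,\#\,\Sigma$, on which the shift is tautological, combined with the displacement lemma to arrange $H_0=H$ --- and only then defines $H_i:=\varphi^i(H)$, which are automatically pairwise disjoint and shifted by $\varphi$. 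Note the paper itself states this lemma without proof, deferring to the half-space machinery of \cite{malestein2024selfsimilar}; your attempt to prove the literal universally quantified version, taking the family as given and constructing the shift afterwards, goes in the wrong direction and cannot succeed.
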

In other words, a $H$-translation map is a shift map that shifts the copies of $H$.
Such $\varphi$ is called a \emph{$H$-translation map}.

\begin{lem}{\cite[Lemma 3.10]{malestein2024selfsimilar}}\label{lem:half-space_displacable}
	Let $H, H'$ be half-spaces of $\Sigma$.
	Then there is a homeomorphism $\mu$ of $\Sigma$ so that $\mu(H) = H'$.
\end{lem}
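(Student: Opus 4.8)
The statement to prove is Lemma 3.10 from \cite{malestein2024selfsimilar}, restated here as Lemma~\ref{lem:half-space_displacable}: given two half-spaces $H, H'$ of a perfectly self-similar subsurface $\Sigma$, there is a homeomorphism $\mu$ of $\Sigma$ with $\mu(H) = H'$.

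\textbf{Overall approach.} The plan is to reduce the problem to the homogeneity of the end space of a perfectly self-similar surface. The essential point is that a half-space $H$ is determined, up to homeomorphism of $\Sigma$, purely by the topology of its complementary decomposition: both $H$ and its complement $\overline{H}^c \cap \Sigma$ carry a maximal end of $\Sigma$, and $\partial H$ is a single compact separating curve. Since $\Sigma$ is perfectly self-similar, $\mathcal{M}(\Sigma)$ is a Cantor set and the end space is maximally homogeneous in the sense relevant here. I would therefore first show that $H$ and $\overline{H}^c \cap \Sigma$ are each homeomorphic to a half-space in a standard model, then glue homeomorphisms across the separating curve.

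\textbf{Key steps, in order.} First I would observe that $\partial H$ and $\partial H'$ are each a single compact curve separating $\Sigma$ into two pieces, each piece containing a maximal end. By the change-of-coordinates principle for surfaces, I can find a homeomorphism of $\Sigma$ carrying the (compact, connected, separating) curve $\partial H$ to $\partial H'$, provided the two sides match up topologically; so the problem is to verify that the two sides do match. Second, I would invoke self-similarity: because $\Sigma$ is perfectly self-similar, for \emph{any} half-space $K$ the subsurface $K$ itself is homeomorphic to $\Sigma$ with one boundary component added, since $E(K)$ contains a maximal end of $\Sigma$ and hence (by the definition of the preorder and the classification of surfaces via the $5$-tuple $(p,b,g,E,E^G)$) the end space of $K$ is homeomorphic to that of $\Sigma$, and likewise for genus and punctures. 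This is the crux: both $H$ and $\overline{H}^c \cap \Sigma$ are homeomorphic copies of the ``same'' half-space model, with the same end-space data $(E, E^G)$ on each side. Third, having identified $H \cong H'$ and $\overline{H}^c \cap \Sigma \cong \overline{H'}^c \cap \Sigma$ by homeomorphisms matching their respective boundary curves, I would glue these two homeomorphisms along $\partial H \mapsto \partial H'$ (arranging the boundary identifications to agree, possibly after composing with a boundary Dehn twist or a reflection to fix orientation) to obtain the desired $\mu \in \Homeo(\Sigma)$ with $\mu(H) = H'$.

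\textbf{Main obstacle.} The hard part will be step two: verifying precisely that the two complementary pieces of a half-space each realize the full end-type data of $\Sigma$, so that the classification theorem applies to give the piecewise homeomorphisms. The definition of half-space only guarantees that each side contains \emph{a} maximal end of $E(\Sigma)$; I must upgrade this, using perfect self-similarity, to the statement that each side contains a homeomorphic copy of the entire maximal-end Cantor set $\mathcal{M}(\Sigma)$ together with the correct accumulation structure $(E, E^G)$. Once this is established, the classification of surfaces by the $5$-tuple makes the gluing routine; without it the boundary curves might separate $\Sigma$ into topologically distinct pieces and no such $\mu$ would exist. A secondary (minor) technical point is ensuring the glued map is orientation-preserving and continuous across $\partial H$, which is handled by the standard collar-neighborhood argument and, if needed, a correcting twist supported near the separating curve.
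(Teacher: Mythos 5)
First, a structural point: the paper does not prove this lemma at all --- it is quoted directly from Malestein--Tao \cite[Lemma 3.10]{malestein2024selfsimilar} --- so your attempt can only be compared with the strategy of that reference. Your outline does follow the same route the reference takes: show that each of the two pieces cut off by $\partial H$ (respectively $\partial H'$) is homeomorphic to $\Sigma$ with an open disk removed, and then assemble $\mu$ by gluing homeomorphisms of the pieces along the boundary curves, using the Ker\'ekj\'art\'o--Richards classification of surfaces.

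However, there is a genuine gap exactly at what you call the ``main obstacle,'' and the justification you offer for it is not valid. In step two you assert that $E(K) \cong E(\Sigma)$ ``by the definition of the preorder and the classification of surfaces via the $5$-tuple.'' The preorder gives nothing of the sort: it only says that copies of smaller ends appear near larger ones, and the classification theorem can be invoked only \emph{after} one knows the end pairs agree --- it cannot be used to prove that they agree. The statement actually needed is the nontrivial combinatorial fact about perfectly self-similar end spaces: every clopen subset $A \subseteq E(\Sigma)$ containing a maximal end satisfies $(A, A \cap E^G) \cong (E(\Sigma), E^G(\Sigma))$. Establishing this requires (i) the self-similarity axiom, which places a clopen copy of $E(\Sigma)$ inside \emph{one} piece of the partition $E(\Sigma) = A \sqcup (E(\Sigma) \setminus A)$ --- but possibly the wrong piece; (ii) the separate fact that every clopen neighborhood of every maximal end contains a clopen copy of $E(\Sigma)$, to handle the piece in which the axiom did not place the copy; and (iii) an absorption argument using $E(\Sigma) \sqcup E(\Sigma) \cong E(\Sigma)$ (perfect self-similarity) to upgrade ``contains a clopen copy of $E(\Sigma)$'' to ``is homeomorphic to $E(\Sigma)$.'' None of these appear in your proposal; together they \emph{are} the content of the lemma. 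There is also a genus bookkeeping point you pass over: to apply the classification one needs both sides of each half-space to carry the same genus datum as $\Sigma$ (zero or infinite), which rests on the fact that in an infinite-genus perfectly self-similar surface every maximal end is accumulated by genus; without this, one half-space could have all genus on one side while another splits it, and no ambient homeomorphism could match them. In short, the skeleton is the right one, but the load-bearing step is asserted rather than proven.
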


\begin{lem}{\cite[Lemma 3.7]{malestein2024selfsimilar}}\label{lem:lemma3.7_malestein_tao}
	Suppose $H$ is a half-space and $\varphi$ is a $H$-translation map.
	The normal closure of $\varphi$ contains $\Homeo(H, \partial H)$.
\end{lem}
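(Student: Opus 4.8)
The plan is to run the classical commutator swindle, in exactly the same spirit as the proof of Proposition \ref{prop:swindle}. Fix an arbitrary $f \in \Homeo(H, \partial H)$ and regard it as a homeomorphism of $\Sigma$, and hence of $S$, by extending it by the identity outside $H$; this extension is continuous precisely because $f$ fixes $\partial H$ pointwise. Set $H_0 := H$ and $H_i := \varphi^i(H_0)$, where $\varphi$ is the $H$-translation map supplied by Lemma \ref{lem:H-translation}, so that the $H_i$ are pairwise disjoint and $\varphi(H_i) = H_{i+1}$. Since $f$ is a self-homeomorphism of $H_0$, the conjugate $\varphi^i f \varphi^{-i}$ is a self-homeomorphism of $H_i$ fixing $\partial H_i$ and supported in $H_i$; in particular distinct conjugates have disjoint supports and therefore commute.

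Next I would form the one-sided infinite product
\[
\widetilde{f} := \prod_{i=0}^{\infty} \varphi^i f \varphi^{-i},
\]
defined to agree with $\varphi^i f \varphi^{-i}$ on each $H_i$ and with the identity elsewhere. The one genuinely delicate point, and the step I expect to be the main obstacle, is to verify that $\widetilde{f}$ is an honest homeomorphism of $S$ rather than merely a set-theoretic bijection. Away from the accumulation set of the family $\{H_i\}_{i\ge 0}$ this is immediate, since the family is locally finite there. Bicontinuity must be checked at the end $e$ to which the $H_i$ converge as $i \to \infty$: one uses that, by the half-space structure of a perfectly self-similar surface, every neighborhood of $e$ contains all but finitely many of the $H_i$, while $\widetilde{f}$ maps each $H_i$ onto itself and keeps every point inside its own half-space. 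Hence $\widetilde{f}$ fixes $e$ and carries small neighborhoods of $e$ into small neighborhoods, which gives continuity of $\widetilde{f}$ and, by the same reasoning, of $\widetilde{f}^{-1}$. This is the same convergence phenomenon that makes the product in Proposition \ref{prop:swindle} well defined.

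With $\widetilde{f}$ in hand the remaining computation is purely formal. Conjugating by $\varphi$ shifts the index,
\[
\varphi\, \widetilde{f}\, \varphi^{-1} = \prod_{i=1}^{\infty} \varphi^i f \varphi^{-i},
\]
and because all factors commute the product telescopes to
\[
\widetilde{f}\,\varphi\,\widetilde{f}^{-1}\varphi^{-1} = \widetilde{f}\,\bigl(\varphi\,\widetilde{f}\,\varphi^{-1}\bigr)^{-1} = f.
\]
Thus $f = \bigl(\widetilde{f}\,\varphi\,\widetilde{f}^{-1}\bigr)\,\varphi^{-1}$ is the product of a conjugate of $\varphi$ and of $\varphi^{-1}$, both of which lie in the normal closure $\langle\langle\varphi\rangle\rangle$, so $f \in \langle\langle\varphi\rangle\rangle$. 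As $f$ was an arbitrary element of $\Homeo(H,\partial H)$, this yields $\Homeo(H,\partial H) \subseteq \langle\langle\varphi\rangle\rangle$. I would close by noting that, in contrast to Proposition \ref{prop:swindle}, no topological closure is needed here: the swindle realizes each $f$ \emph{exactly} as a commutator, so the inclusion holds already at the level of the algebraic normal closure.
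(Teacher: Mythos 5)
Your proof is correct and is essentially the paper's own argument: the same commutator swindle, forming the infinite product of $\varphi$-conjugates of $f$ supported in the pairwise disjoint half-spaces $H_i$ and telescoping to write $f$ as a product of a conjugate of $\varphi$ with $\varphi^{-1}$ (the paper uses the opposite conjugation convention, which is immaterial). Your extra care about $\widetilde{f}$ being a genuine homeomorphism, and your closing remark that the algebraic (not just topological) normal closure suffices, are both consistent with, and slightly more detailed than, what the paper writes.
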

\begin{proof}
	Let $H := H_0$.
	Choose any $f \in \Map(H, \partial H)$.
	Consider the repetition map $\widetilde{f} = f \circ (\varphi^{-1} f \varphi) \circ  (\varphi^{-2} f \varphi^2) \circ \cdots$.
	Since each support of $\varphi^{-k} f \varphi^k$ is in $H_k$, hence $\widetilde{f}$ is well-defined.
	Then we have $f = \widetilde{f} \varphi^{-1} \widetilde{f}^{-1} \varphi$, which implies that $f$ is in the normal closure of $\varphi$. 
	Since $f$ is arbitrary, $\Homeo(H, \partial H) \subset \langle\langle \varphi \rangle\rangle$.
\end{proof}
The technique we introduced in the proof of Lemma \ref{lem:lemma3.7_malestein_tao} is referred to as a \textit{swindle}.
With Lemma \ref{lem:half-space_displacable}, the half space in $H$ can be arbitrary.

\subsection{Proof of Theorem \ref{thm:uncountable_case_ex}}
Let $S$ be a surface which satisfies the assumption in Theorem \ref{thm:uncountable_case_ex}.
Denote $x$ a unique isolated maximal end, and let $z$ be the only immediate predecessor of $x$ if exists.

We first decompose the surface $S$ as follows.
Choose a subsurface $K$ embedded in $S$ which is homeomorphic to a sphere with $4$ boundaries, such that $S - K$ has $4$ connected components consisting of $S_u$ deleted with an open disk and $3$ homeomorphic copies of $S_p$ deleted with an open disk.
By abusing the notation, we label each subsurface with $S_u$ and $S_1, S_2, S_3$, respectively.
Note that the boundary of $S_u$ separates the $S_u$ part from the $S_p$ part.
See Figure \ref{fig:topnormgen_uncountable_surf_decomp} also.

\begin{figure}[h]
	\centering
	\includegraphics[width=.4\textwidth]{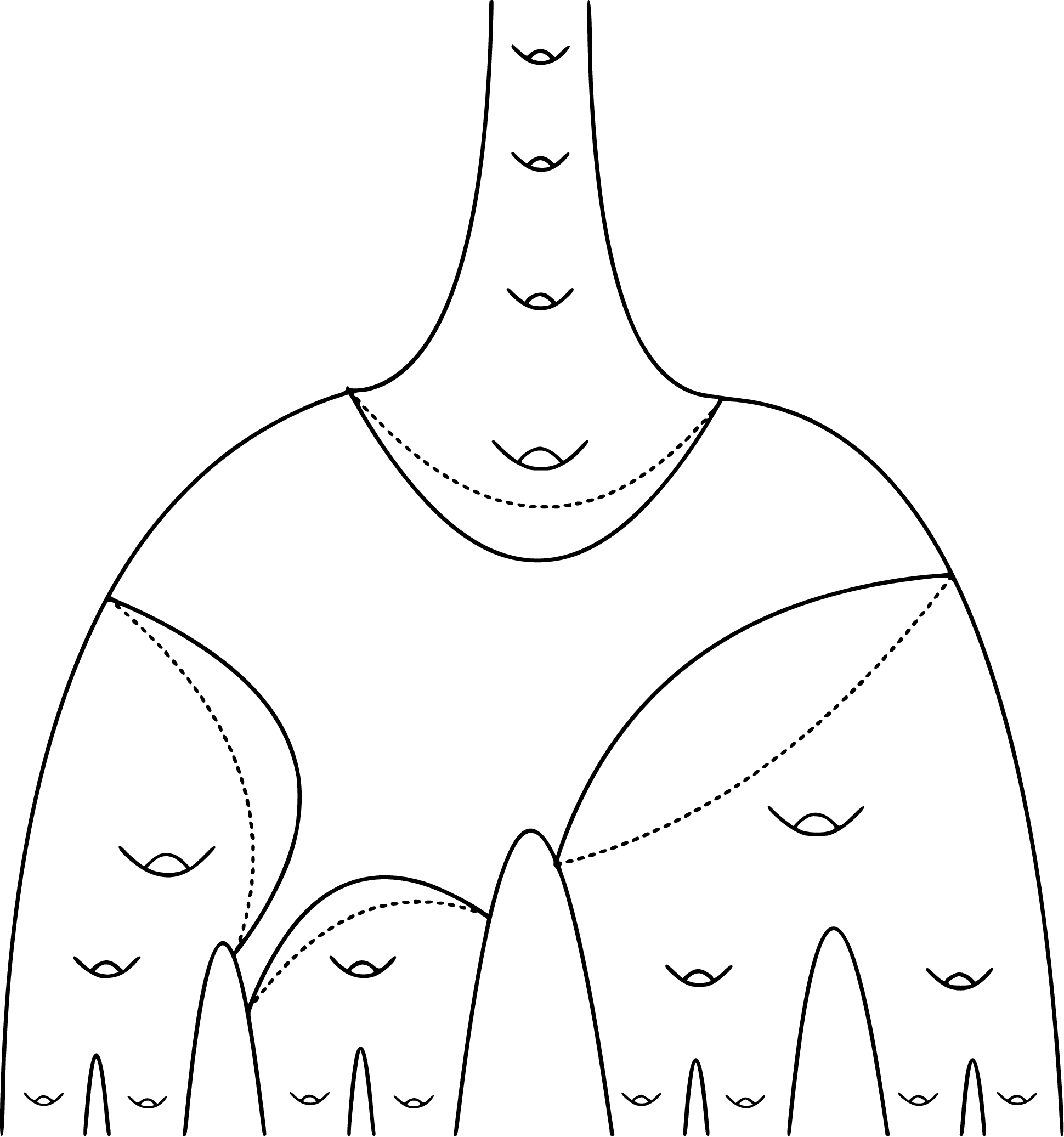}
	\caption{The surface decomposition by the $2$-sphere with $4$ boundaries at the center. There is a $S_u$ on the top, and the other $3$ subsurfaces are $S_1, S_2, S_3$, all homeomorphic to each other.}
	\label{fig:topnormgen_uncountable_surf_decomp}
\end{figure}

\begin{lem}\label{lem:uncountable_section,half_twist}
	Let $H$ be a half-space in $S_p$ and $\varphi$ be a $H$-translation map.
	The half twist between $S_i, S_j$, $i\neq j \in \{1, 2,3\}$ is in the normal closure of $\varphi$.
\end{lem}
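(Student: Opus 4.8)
The plan is to produce a half twist that exchanges $S_i$ and $S_j$ as an element of $\topnorm{\varphi}$ by first realizing it as a homeomorphism supported on a half-space, and then invoking the swindle of Lemma \ref{lem:lemma3.7_malestein_tao}. First I would fix distinct $i, j \in \{1,2,3\}$ and observe that $S_i \cup S_j$, together with the portion of $K$ between them, can be arranged to sit inside a single perfectly self-similar subsurface $\Sigma \subset S_p$. The crucial structural input is that $S_p$ is perfectly self-similar, so by Lemma \ref{lem:half-space_displacable} any two half-spaces of $\Sigma$ are related by a homeomorphism; this lets me choose the half-space $H$ freely, in particular a half-space $H$ whose interior contains both copies $S_i$ and $S_j$ and the relevant band of $K$ joining their boundaries.

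The key steps, in order, are as follows. First I would describe the desired half twist $g_{i,j}$ concretely: it is the mapping class exchanging the two boundary circles $\partial S_i$ and $\partial S_j$ by a $\pi$-rotation through the band in $K$ connecting them, extended by a homeomorphism $S_i \to S_j$ and $S_j \to S_i$ (which exists since $S_1, S_2, S_3$ are pairwise homeomorphic). Second, I would verify that this $g_{i,j}$ can be taken to be supported inside the chosen half-space $H$, i.e. $g_{i,j} \in \Homeo(H, \partial H)$ after extending by the identity outside $H$; this uses that $H$ was selected (via Lemma \ref{lem:half-space_displacable}) to contain $S_i, S_j$ and the connecting band, with $\partial H$ disjoint from the support of the twist. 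Third, with $\varphi$ a $H$-translation map as in Lemma \ref{lem:H-translation}, I would apply Lemma \ref{lem:lemma3.7_malestein_tao} directly: its conclusion is that $\Homeo(H, \partial H) \subset \topnorm{\varphi}$, and since $g_{i,j}$ lives in $\Homeo(H, \partial H)$, we get $g_{i,j} \in \topnorm{\varphi}$.

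I would also need to address a subtlety about which $H$-translation map $\varphi$ appears in the statement versus the one produced by Lemma \ref{lem:H-translation}. By Lemma \ref{lem:half-space_displacable} all half-spaces are interchangeable by a homeomorphism $\mu$, and conjugating a translation map by $\mu$ yields a translation map for the translated family; since normal closures are conjugation-invariant, it is harmless to work with the specific $H$ adapted to $S_i, S_j$. I would make this explicit so that the single $\varphi$ in the lemma statement genuinely dominates every half twist $g_{i,j}$, for all pairs $i \neq j$, up to passing to conjugate half-spaces all of which lie in $\topnorm{\varphi}$.

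The main obstacle I expect is the second step: checking that the half twist exchanging $S_i$ and $S_j$ really is isotopic to a homeomorphism supported in a half-space with connected compact boundary. The half twist naturally lives near the band of $K$ joining $\partial S_i$ and $\partial S_j$, but a half-space $H$ is by definition a subsurface of the perfectly self-similar $\Sigma$ whose boundary $\partial H$ is connected and compact and which splits off a maximal end on each side. I must confirm that the relevant band together with $S_i \cup S_j$ can be enclosed in such an $H$ without $\partial H$ crossing the support of the twist, and that the ends of $S_i$ and $S_j$ (which are of the perfectly self-similar type) indeed furnish the required maximal ends on the correct side so that $H$ qualifies as a genuine half-space. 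Once this topological placement is arranged, the rest follows formally from the cited lemmas.
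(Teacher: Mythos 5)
Your proposal is correct and follows essentially the same route as the paper: place the half twist $g_{i,j}$ in a half-space $H'$ containing $S_i \cup S_j$ and the connecting band, transfer between $H$ and $H'$ via the homeomorphism from Lemma \ref{lem:half-space_displacable}, and apply the swindle of Lemma \ref{lem:lemma3.7_malestein_tao} together with conjugation-invariance of normal closures. The only cosmetic difference is that the paper conjugates $g_{i,j}$ into $\Homeo(H,\partial H)$ while you conjugate the translation map $\varphi$ to an $H'$-translation map; these are equivalent.
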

\begin{proof}
	Without loss of generality, we choose $S_1, S_2$. 
	Note that the union of $S_1, S_2$ is contained in the half space $H'$ which is disjoint with $S_3$.
	The half twist $g_{12}$ is supported in $H'$.
	By Lemma \ref{lem:half-space_displacable}, choose a homeomorphism $\mu$ of $S_p$ such that $\mu(H) = H'$.
	Then $\mu^{-1} g_{12}\mu \in \Homeo(H, \partial H)$.
	By Lemma \ref{lem:lemma3.7_malestein_tao}, $g_{12} \in \langle\langle \varphi \rangle\rangle$.  
\end{proof}
Remark that the half-space $H$ and $\varphi$ were arbitrary in Lemma \ref{lem:uncountable_section,half_twist}.
Therefore, we can choose a sufficiently small half-space $H_0$ which contains both $H$ and the support of $\varphi$.
Let $x$ be the unique maximal end of $S_u$.
Also, we choose a maximal end $y$ so that $y \not\in E(H_0)$.
Suppose $z \in E_{cp}(x, y)$ exists and denote $\eta_z$ to be a generalized shift map from $x$ to $y$.
Under conjugation by an appropriate half twist, any generalized shift map conjugates to $\eta_z$.

\begin{lem}\label{lem:uncountable_section,etavar}
	$\langle\langle \eta_z\varphi \rangle\rangle$ contains both $\eta_z$ and $\varphi$.
\end{lem}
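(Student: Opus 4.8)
The plan is to recognize the product $\psi := \eta_z\varphi$ as itself an $H$-translation map and then run the swindle of Lemma~\ref{lem:lemma3.7_malestein_tao} directly on $\psi$. The point is that this swindle expresses its target as a single \emph{finite} commutator of $\psi$ with a genuine homeomorphism, so it lands in the honest normal closure $\langle\langle\psi\rangle\rangle$, not merely in its topological closure.

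First I would fix the geometry so that $\eta_z$ and the translation $\varphi$ do not interfere. Since $x$ is the maximal end of $S_u$ and $y\notin E(H_0)$, both ends driving the shift lie outside the half-space $H_0$, so I may take the representative of $\eta_z$ to be supported in a biinfinite strip $\mathcal{B}_z(x,y)$ disjoint from $H_0$. Then $\eta_z$ is the identity on $H_0$. Writing $\varphi$ as the $H$-translation with $\varphi(H_i)=H_{i+1}$ for pairwise disjoint half-spaces $\{H_i\}$ (copies of the base half-space $H$, all contained in $H_0$) as in Lemma~\ref{lem:H-translation}, it follows that $\eta_z$ fixes every $H_i$ setwise.

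The key observation is then immediate: $\psi(H_i)=\eta_z\varphi(H_i)=\eta_z(H_{i+1})=H_{i+1}$, so $\psi$ is an $H$-translation map for the very same family $\{H_i\}$. I would next check that the proof of Lemma~\ref{lem:lemma3.7_malestein_tao} applies verbatim to $\psi$: for $f$ supported in the base copy $H$, the conjugate $\psi^{-k}f\psi^{k}$ is supported in $\psi^{-k}(H)=H_{-k}$ (here the $\eta_z$-factor of $\psi$ is irrelevant, being the identity on each $H_i$), these supports are pairwise disjoint, the infinite product $\widetilde f=\prod_{k\ge 0}\psi^{-k}f\psi^{k}$ is a well-defined homeomorphism, and $f=\widetilde f\,\psi^{-1}\widetilde f^{-1}\psi$ is a finite commutator. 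This yields $\Homeo(H,\partial H)\subseteq\langle\langle\psi\rangle\rangle$ with no passage to a limit.

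Finally I would upgrade from the small translated half-space $H$ to the chosen half-space $H_0$. By Lemma~\ref{lem:half-space_displacable} there is $\mu\in\Homeo(S_p)$ with $\mu(H)=H_0$, and since $\langle\langle\psi\rangle\rangle$ is normal in $\Map(S)$, conjugation gives $\Homeo(H_0,\partial H_0)=\mu\,\Homeo(H,\partial H)\,\mu^{-1}\subseteq\langle\langle\psi\rangle\rangle$. As $\varphi$ is supported in $H_0$ and fixes $\partial H_0$, we obtain $\varphi\in\Homeo(H_0,\partial H_0)\subseteq\langle\langle\psi\rangle\rangle$, and then $\eta_z=\psi\varphi^{-1}\in\langle\langle\psi\rangle\rangle$. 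The main obstacle I anticipate is the geometric Step~1: justifying that $\eta_z$ can genuinely be routed off $H_0$ (so that $\psi$ is honestly an $H$-translation), together with the bookkeeping that the $\eta_z$-factor leaves the supports $H_{-k}$ undisturbed throughout the swindle. By contrast, verifying that $H$ and $H_0$ are legitimate equivalent half-spaces of $S_p$ for Lemma~\ref{lem:half-space_displacable} should be routine.
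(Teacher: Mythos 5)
Your proposal is correct and takes essentially the same route as the paper: the paper likewise exploits the disjointness of the supports of $\eta_z$ and $\varphi$ (so $\eta_z$ commutes with everything carried on the half-spaces) to run the swindle on $\eta_z\varphi$, concluding that $\Map(H,\partial H)\subset\langle\langle \eta_z\varphi \rangle\rangle$ for a half-space avoiding the support of $\eta_z$, whence $\varphi$ and then $\eta_z=(\eta_z\varphi)\varphi^{-1}$ lie in the normal closure. Your explicit check that $\eta_z\varphi$ is itself an $H$-translation, and the conjugation by $\mu$ from Lemma~\ref{lem:half-space_displacable} to pass from $H$ to $H_0\supset\supp(\varphi)$, simply spell out what the paper leaves implicit.
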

\begin{proof}
	By the choice of $y$ and $\varphi$, $\eta_z$ and $\varphi$ commutes.
	As in the \textit{swindle} proof, any element in $\Map(H, \partial H)$ is a commutator.
	Hence, if the half-space $H$ does not intersect with the support of $\eta_z$, then $\Map(H, \partial H)$ is also contained in $\langle\langle \eta_z\varphi \rangle\rangle$. 
	Thus, $\varphi \in \langle\langle \eta_z\varphi \rangle\rangle$. 
\end{proof}

Lemma \ref{lem:uncountable_section,etavar} implies that every mapping class group whose support is contained in the union of $S_1, S_2, S_3$ is in $\langle\langle \eta_z\varphi \rangle\rangle$.

\begin{lem}\label{lem:uncountable_section,Map(K)}
	$\langle\langle \eta_z\varphi \rangle\rangle$ contains $\Map(K)$.
\end{lem}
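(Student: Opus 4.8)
The plan is to exhibit a finite generating set of $\Map(K)$, where $K$ is the central four-holed sphere with boundary curves $c_0, c_1, c_2, c_3$ (with $c_0$ facing $S_u$ and $c_i$ facing $S_i$ for $i \in \{1,2,3\}$), and to verify that each generator lies in $\langle\langle \eta_z\varphi \rangle\rangle$. Since $S_1, S_2, S_3$ are homeomorphic while $S_u$ is of a different self-similar type, a mapping class supported near $K$ may permute $c_1, c_2, c_3$ but must preserve $c_0$. Accordingly, I would take as generators for $\Map(K)$ the half twists $g_{ij}$ exchanging $c_i$ and $c_j$ for $i,j \in \{1,2,3\}$, together with the Dehn twists $T_{c_0}, T_{c_1}, T_{c_2}, T_{c_3}$ around the four boundary curves and the Dehn twists $T_{\delta_{ij}}$ around the three separating curves $\delta_{ij}$ (where $\delta_{ij}$ encloses $c_i$ and $c_j$). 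This is the standard generating set for the mapping class group of a four-holed sphere with one distinguished boundary component.

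First, the remark following Lemma \ref{lem:uncountable_section,etavar} already places every mapping class supported in $S_1 \cup S_2 \cup S_3$ inside $\langle\langle \eta_z\varphi \rangle\rangle$; in particular the boundary twists $T_{c_1}, T_{c_2}, T_{c_3}$, realized by pushing the twist into a collar inside $S_i$, belong to $\langle\langle \eta_z\varphi \rangle\rangle$. Next, since $\varphi \in \langle\langle \eta_z\varphi \rangle\rangle$ by Lemma \ref{lem:uncountable_section,etavar} and the half twist between $S_i$ and $S_j$ lies in $\langle\langle \varphi \rangle\rangle$ by Lemma \ref{lem:uncountable_section,half_twist}, all of $g_{12}, g_{13}, g_{23}$ lie in $\langle\langle \eta_z\varphi \rangle\rangle$. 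The squares of the half twists then produce the separating twists: $g_{ij}^2$ equals $T_{\delta_{ij}}$ up to a product of the boundary twists $T_{c_i}, T_{c_j}$, both already obtained, so each $T_{\delta_{ij}} \in \langle\langle \eta_z\varphi \rangle\rangle$.

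The one twist not directly accessible this way is $T_{c_0}$, because $c_0$ faces $S_u$ and cannot be half-twisted against any $c_i$. Here I would invoke the full-twist (lantern-type) relation for the three-strand braiding of $c_1, c_2, c_3$ inside $K$: the full twist $(g_{12}g_{23})^3$ is the Dehn twist around the curve enclosing $c_1, c_2, c_3$, which is isotopic in $K$ to $c_0$. Thus $T_{c_0}$ equals $(g_{12}g_{23})^3$ up to the boundary twists $T_{c_1}, T_{c_2}, T_{c_3}$, giving $T_{c_0} \in \langle\langle \eta_z\varphi \rangle\rangle$. Having placed the half twists $g_{ij}$, all four boundary twists, and all three separating twists inside $\langle\langle \eta_z\varphi \rangle\rangle$, and since these generate $\Map(K)$, the proof concludes.

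I expect the main obstacle to be the bookkeeping for $c_0$: because $S_u$ has a different end type, no half twist moves $c_0$, so $T_{c_0}$ must be recovered indirectly through the full-twist relation among $c_1, c_2, c_3$, and one must track the auxiliary boundary twists accurately. A secondary point to pin down is the precise meaning of $\Map(K)$ as the group of mapping classes supported near $K$ that are permitted to permute only the three symmetric boundaries while fixing $c_0$, so that the cited finite-type generating set is exactly the correct one.
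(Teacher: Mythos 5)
Your proof is correct, but it handles the crucial boundary twist $T_{c_0}$ (the twist along $\partial S_u$) by a genuinely different mechanism than the paper. The paper's proof generates $\Map(K)$ by the same three families you list, and treats $T_{c_1}, T_{c_2}, T_{c_3}$ (pushed into the $S_i$) and the separating twists (squares of half twists, via Lemma \ref{lem:uncountable_section,half_twist}) exactly as you do; but for $T_{c_0}$ it runs a \emph{swindle} with the shift $\eta_z$: copies $\eta_z^k T_{c_0}\eta_z^{-k}$ have disjoint supports accumulating on the maximal end $x$ of $S_u$, so the usual infinite-product commutator trick (as in Proposition \ref{prop:swindle} and Lemma \ref{lem:lemma3.7_malestein_tao}) puts $T_{c_0}$ in the normal closure of $\eta_z$, hence of $\eta_z\varphi$. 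You instead recover $T_{c_0}$ from the braid full-twist relation $(g_{12}g_{23})^3 = T_{c_0}\cdot(\text{correction})$, where the correction is supported in $S_1\cup S_2\cup S_3$ and is therefore already in the normal closure by the remark following Lemma \ref{lem:uncountable_section,etavar}; this is sound, provided (as you note) the half twists are chosen in a standard configuration, which is harmless since Lemma \ref{lem:uncountable_section,half_twist} applies to any half twist supported in a half-space of the perfectly self-similar part. The trade-off: the paper's swindle is uniform with the rest of its toolkit but genuinely uses the shift $\eta_z$ toward $x$, whereas your relation-based argument never touches $\eta_z$ and in fact proves the slightly stronger statement $\Map(K)\subset\langle\langle\varphi\rangle\rangle$, which remains meaningful even in the degenerate situation where no generalized shift out of $S_u$ is available. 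Your closing caveat about the exact exponents in the full-twist relation is the right thing to worry about, and your resolution — absorb all corrections into classes supported in the $S_i$'s — disposes of it completely.
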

\begin{proof}
	Note that $\Map(K)$ is generated by the Dehn twists along the boundaries and separating curves.
	Each Dehn twist along the boundary of $S_i$ is a mapping class in $\Map(S_i, \partial S_i)$, by pushing the support slightly beyond $K$.
	Also, the Dehn twist along the boundary of $S_u$ can be made by the \textit{swindle} technique with $\eta_z$, by copying the Dehn twists towards to $x$. 
	The remaining part is a Dehn twist along seperating curves.
	Since this is the square of half twists $g_{ij}$, we are done.
\end{proof}

$S_u$ is uniquely self-similar with a boundary, hence it is not uniquely self-similar.
Choose any homeomorphism $f \in \Map(S_u, \partial S_u)$.
After conjugating by $\eta_z$, the support $\eta_z^{-1} f \eta_z$ is proper subset of $S_u$.
Thus, any $f$ can be considered as a mapping class of $\hat{S_u}$, where $\hat{S_u}$ is a disk capping surface of $S_u$ along $\partial S_u$.
Since $\hat{S_u}$ is uniquely self-similar, there is a dense conjugacy class.

Before we continue the argument, we first introduce which element has a dense conjugacy class.
We follow the notation in \cite{lanier2023homeomorphism}.
Let $S$ be uniquely self-similar surface and $x$ be a maximal end of $S$.
\begin{prop}[Proposition 4.2 in \cite{lanier2023homeomorphism}]	
	Let $c$ be a separating curve of $S$ and $\Omega_c$ denote the component of $S - c$ such that $x \in E(\Omega_s)$.
	Suppose $G$ be a subgroup of $\Map(S)$ as follows.
	\[
		G := \left\{ g \in \Map(S) ~:~ g\big\vert_{\Omega_c} = id \textrm{ for some separating curve } c ~\right\}.
	\]
	Then $G$ is dense in $\Map(S)$.
\end{prop}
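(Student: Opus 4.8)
The plan is to work directly from the standard neighborhood basis of the identity in $\Map(S)$ and reduce density to a local approximation statement. Since $\Map(S)$ carries the quotient of the compact–open topology, a neighborhood basis of the identity is given by the open subgroups $V_K := \{ h \in \Map(S) \mid h|_K = \mathrm{id}\}$ (the analogues of the $\mathcal{V}_L$ appearing in Proposition \ref{prop:prop5.4_mannrafi}), where $K$ ranges over compact finite-type subsurfaces of $S$. Because each $V_K$ is a subgroup, $G$ is dense if and only if $G \cdot V_K = \Map(S)$ for every such $K$, which is in turn equivalent to the assertion: for every $f \in \Map(S)$ and every compact $K$ there is $g \in G$ with $g|_K = f|_K$. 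So I would fix $f$ and $K$ and aim to produce such a $g$.

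Next I would exploit that $x$ is the unique maximal end. Every homeomorphism preserves the preorder $\preccurlyeq$ and hence permutes $\mathcal{M}(S)$; as $\mathcal{M}(S) = \{x\}$, any representative $F$ of $f$ fixes the end $x$. Using continuity of $F$ together with $F(x) = x$, I would choose a separating curve $c$ bounding a neighborhood $\Omega_c$ of $x$ so small that $\Omega_c$, $F(\Omega_c)$ and $F^{-1}(\Omega_c)$ are all disjoint from $K$. The idea is then to correct $F$ only near $x$: I would construct a homeomorphism $\beta$ supported inside $\Omega_c$ (so that $\beta|_K = \mathrm{id}$ and $\beta$ is the identity on $F(K)$) which agrees with $F^{-1}$ on a smaller neighborhood $F(\Omega_{c'})$ of $x$, where $c'$ is an even smaller separating curve. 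Setting $g := \beta \circ F$, one checks that $g|_{\Omega_{c'}} = \mathrm{id}$, so that $g \in G$, while $g|_K = \beta \circ F|_K = F|_K = f|_K$ because $\beta$ is the identity on $F(K)$. This yields the required $g$.

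The main obstacle is the construction of $\beta$, that is, absorbing the local discrepancy $F^{-1}$ between two neighborhoods of $x$ into the self-similar end. Concretely, one must extend the homeomorphism $F^{-1} \colon F(\Omega_{c'}) \to \Omega_{c'}$ over the intermediate region $\Omega_c \setminus \mathrm{int}\, F(\Omega_{c'})$ to a homeomorphism onto $\Omega_c \setminus \mathrm{int}\, \Omega_{c'}$ that fixes the outer boundary $c$ pointwise. By the classification of surfaces relative to the boundary, such an extension exists precisely when the two ``collar-with-ends'' cobordisms are homeomorphic compatibly with the prescribed boundary data, and this is exactly where unique self-similarity enters: stability of the maximal end $x$ guarantees that all separating-curve neighborhoods of $x$ are homeomorphic and nest as standard copies, so the end spaces of the two intermediate regions agree and the extension can be realized. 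I expect the technical heart of the argument to be verifying this matching of end spaces and handling the nesting of $\Omega_c$, $F(\Omega_c)$ and $F^{-1}(\Omega_c)$ cleanly; the hypothesis that $x$ is the \emph{unique} maximal end is what prevents any other maximal end from obstructing the extension.
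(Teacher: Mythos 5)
First, a caveat: the paper does not actually prove this statement; it is quoted from Lanier--Vlamis (Proposition 4.2 of \cite{lanier2023homeomorphism}) and used as a black box in Section \ref{sec:uncountable}, so there is no internal proof to compare against. Judging your argument on its own merits: the reduction is the right shape. Working with the basic open subgroups $V_K$, density is indeed equivalent to producing, for each $f$ and each compact $K$, some $g \in G$ agreeing with $f$ on $K$; and since $x$ is the unique maximal end, any representative $F$ of $f$ fixes $x$, so one may try to correct $F$ near $x$ by a map $\beta$ supported in $\Omega_c$ and set $g = \beta \circ F$.

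The genuine gap is in the step you yourself flag as the technical heart, and the justification you offer for it is false. You need a homeomorphism between the two collar regions $X = \Omega_c \setminus \mathrm{int}\, F(\Omega_{c'})$ and $Y = \Omega_c \setminus \mathrm{int}\, \Omega_{c'}$ matching the prescribed boundary data, and you claim this holds because unique self-similarity makes all separating-curve neighborhoods of $x$ homeomorphic. That cited fact is true (they are all homeomorphic to $S$ minus an open disk), but it does \emph{not} imply the collars match: what you need is a cancellation statement of the form $D \sqcup E(Y) \cong D \sqcup E(X) \Rightarrow E(Y) \cong E(X)$, and cancellation fails for end spaces. Concretely, let $S$ be the planar uniquely self-similar surface with $E(S) = \omega^3 + 1$ and $x = \omega^3$. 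Take $E(\Omega_c) = (\omega^2, \omega^3]$ and $E(\Omega_{c'}) = (\omega^2 + \omega, \omega^3]$. Since $[0, \omega^2 + \omega]$ and $[0, \omega^2] \cup \{\omega^2 + 1\}$ are both countable compact spaces whose second derived set is a single point, both are homeomorphic to $\omega^2 + 1$, and their complements are both homeomorphic to $\omega^3 + 1$; gluing homeomorphisms of these clopen pieces yields a homeomorphism $F$ of $S$ fixing $x$ with $F\bigl(E(\Omega_{c'})\bigr) = (\omega^2, \omega^3] \setminus \{\omega^2 + 1\}$. For this $F$ and these curves, $E(Y) = (\omega^2, \omega^2+\omega] \cong \omega + 1$ while $E(X) = \{\omega^2+1\}$ is a single point, so $X \not\cong Y$ and no $\beta$ of the kind you describe exists. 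Thus the inference ``homeomorphic neighborhoods of $x$ $\Rightarrow$ homeomorphic intermediate regions'' is simply wrong, and your proof supplies no mechanism for choosing $c$ or $c'$ to avoid such configurations.

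The strategy can be repaired, but only by adding the missing idea: the inner curve must be chosen adaptively so that the collars are \emph{forced} to agree. For countable end spaces one can, for instance, choose $\Omega_{c'}$ so that $E(S) \setminus E(\Omega_{c'})$ has strictly larger Cantor--Bendixson rank (or equal rank and strictly larger degree) than $E(S) \setminus E(\Omega_c)$ and also contains $F^{-1}\bigl(E(S)\setminus E(\Omega_c)\bigr)$; then both collars have the same rank and degree and the Mazurkiewicz--Sierpi\'nski classification gives $X \cong Y$. For uncountable end spaces no such classification exists, and the published proofs instead run absorption/swindle arguments of the type the paper uses in Section \ref{sec:uncountable} (Lemmas \ref{lem:lemma3.7_malestein_tao} and \ref{lem:shift_makes_finite_bound_map}) together with Mann--Rafi's stability machinery. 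As written, your argument rests on a false intermediate claim, so it does not constitute a proof.
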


Let $\mathbb{D}$ be a closed disk embedded in $S$.
For every separating curve $c$ in $S$, there is an isotopy class which avoids $\mathbb{D}$.
Hence $G$ is a collection of finitely bounded mapping classes and moreover it is dense in $\Map(S - int(\mathbb{D}))$. 
\begin{lem}\label{lem:uncountable_Map(Su)}
	Let $G$ be as above and consider it as a subgroup of $\Map(S_u, \partial S_u)$.
	Then $G$ is contained in $\langle\langle \eta_z \rangle\rangle$.
\end{lem}
\begin{proof}
	By Proposition \ref{prop:swindle}, every finitely bounded map is a commutator of $\eta_z^k$ and a repetition map.
\end{proof}

Comprehensively, we prove Theorem \ref{thm:uncountable_case_ex}.
\begin{proof}[Proof of Theorem \ref{thm:uncountable_case_ex}]
	Suppose $S$ is a surface that satisfies the assumption.
	Decompose the surface as we described in the beginning of the subsection. (See Figure \ref{fig:topnormgen_uncountable_surf_decomp}.)
	The decomposition satisfies Proposition \ref{prop:prop5.4_mannrafi}, hence $\Map(S)$ is generated by the identity neighborhood $\mathcal{V}_K$, $\Map(K)$, $\eta_z$ (if $z$ exists), handle shifts (if they exist) and the half twists among $S_1, S_2, S_3$.
	By Lemma \ref{lem:uncountable_section,etavar}, $\langle\langle \eta_z\varphi \rangle\rangle$ contains both $\eta_z$ and $\varphi$.
	So by Lemma \ref{lem:lemma3.7_malestein_tao}, \ref{lem:uncountable_Map(Su)}, $\mathcal{V}_K \subset \langle\langle \eta_z\varphi \rangle\rangle$.
	Also, by Lemma \ref{lem:uncountable_section,half_twist}, \ref{lem:uncountable_section,Map(K)}, both $\Map(K)$ and half twists are contained in $\langle\langle \eta_z\varphi \rangle\rangle$.
	Therefore, $\Map(S)$ is topologically normally generated by $\eta_z\varphi$.
\end{proof}

The remaining case is that $S$ is a perfectly self-similar with one puncture.
This is already answered in \cite{malestein2024selfsimilar}.
\begin{thm}[{\cite[Theorem B]{malestein2024selfsimilar}}]
	Let $S$ be a perfectly self-similar surface with a puncture.
	Then $\Map(S)$ is normally generated by a single involution.
\end{thm}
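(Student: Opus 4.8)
The plan is to exhibit a single involution $\sigma$ and show that $\langle\langle\sigma\rangle\rangle = \Map(S)$, using the half-space machinery of Section~\ref{sec:uncountable}. The overall strategy is: first produce an $H$-translation map inside $\langle\langle\sigma\rangle\rangle$ by a ``product of two reflections'' argument; then invoke Lemma~\ref{lem:lemma3.7_malestein_tao} together with the displaceability of half-spaces (Lemma~\ref{lem:half-space_displacable}) to conclude that $\langle\langle\sigma\rangle\rangle$ contains $\Homeo(H,\partial H)$ for \emph{every} half-space $H$; and finally appeal to a fragmentation of $\Map(S)$ into half-space subgroups. The hypotheses enter as follows: perfect self-similarity gives the Cantor set of maximal ends and hence an abundance of pairwise homeomorphic half-spaces, while the isolated puncture $p$ serves to anchor the reflection.

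First I would construct $\sigma$. Since $\mathcal{M}(S)$ is a Cantor set, I can choose a bi-infinite family of pairwise disjoint, pairwise homeomorphic half-spaces $\{H_n\}_{n\in\mathbb{Z}}$, arranged symmetrically so that $S$ admits an involution $\sigma$ acting as the reflection $H_n \leftrightarrow H_{-n}$ and fixing the puncture $p$, which sits on the axis of symmetry. The existence of such a symmetric model, with $\sigma$ a genuine order-two mapping class, is exactly where perfect self-similarity and the anchoring puncture are used.

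Next comes the classical step that a product of two reflections is a translation. Let $\tau = g\sigma g^{-1}$ be a conjugate of $\sigma$ realizing the reflection $H_n \leftrightarrow H_{1-n}$, obtained by displacing the axis of $\sigma$ by a half-step; such $g$ exists by Lemmas~\ref{lem:H-translation} and~\ref{lem:half-space_displacable}. Then $\varphi := \tau\sigma$ sends $H_n$ to $H_{n+1}$, so $\varphi$ is an $H$-translation map, and since $\tau$ is conjugate to $\sigma$ we have $\varphi \in \langle\langle\sigma\rangle\rangle$. Applying Lemma~\ref{lem:lemma3.7_malestein_tao} to $\varphi$ yields $\Homeo(H_0,\partial H_0)\subseteq \langle\langle\varphi\rangle\rangle \subseteq \langle\langle\sigma\rangle\rangle$, and conjugating by the homeomorphisms provided by Lemma~\ref{lem:half-space_displacable} shows $\Homeo(H',\partial H')\subseteq \langle\langle\sigma\rangle\rangle$ for every half-space $H'$.

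Finally I would show these half-space subgroups generate $\Map(S)$: every mapping class of $S$ is a finite product of classes each supported in a half-space, a fragmentation property of perfectly self-similar surfaces following from self-similarity and displaceability, so $\langle\langle\sigma\rangle\rangle = \Map(S)$; the puncture causes no difficulty, being fixed by $\sigma$ and lying in the already-covered region. The main obstacle is the first step: realizing $\sigma$ as an honest involution that simultaneously fixes $p$ and reflects the chain $\{H_n\}$, and verifying that $\tau\sigma$ is genuinely an $H$-translation rather than merely shift-like. A secondary subtlety is that the theorem asserts \emph{normal} rather than merely topological normal generation, so every containment must be exact: the swindle underlying Lemma~\ref{lem:lemma3.7_malestein_tao} already produces honest commutators, and the final fragmentation must be a genuine finite product, which is precisely where perfect self-similarity is essential.
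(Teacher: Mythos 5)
First, a point of order: the paper does not prove this statement at all --- it is imported verbatim from Malestein--Tao \cite{malestein2024selfsimilar} precisely to cover the one case that the argument of Theorem \ref{thm:uncountable_case_ex} cannot reach (when the uniquely self-similar summand is a once-punctured disk, there is no shift $\eta_z$ and hence no element $\eta_z\varphi$ to work with). So your proposal must be measured against the cited source rather than against anything in this paper. In spirit your outline does reconstruct that source's strategy: write a translation as a product of two conjugate involutions, run the swindle of Lemma \ref{lem:lemma3.7_malestein_tao} to capture $\Homeo(H,\partial H)$, move half-spaces around with Lemma \ref{lem:half-space_displacable}, and finish by fragmentation.

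The genuine gap is the final step. Everything up to ``$\Homeo(H',\partial H')\subseteq \langle\langle\sigma\rangle\rangle$ for every half-space $H'$'' is a routine assembly of the quoted lemmas; the fragmentation claim --- that every element of $\Map(S)$ is a finite product of classes, each supported in a half-space disjoint from the puncture --- is exactly the hard content of the theorem, and you dispose of it in one clause as ``following from self-similarity and displaceability.'' It does not follow formally from either: one must handle mapping classes with no bounded support whatsoever, as well as classes that interact with the puncture (twists about puncture-surrounding curves, point-pushing maps), and show that all of these factor into pieces supported in half-spaces avoiding $p$. In the cited paper this is a separate, nontrivial lemma with its own displacement argument; as written, your proof is circular at its key point, reducing the theorem to an unproved statement essentially equivalent to it. A second, smaller gap: every conjugation must take place inside $\Map(S)$, i.e.\ by homeomorphisms fixing $p$, but Lemmas \ref{lem:H-translation} and \ref{lem:half-space_displacable} produce homeomorphisms of the perfectly self-similar surface that have no reason to fix $p$. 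In particular, the map $g$ you use to build $\tau$ is (a conjugate by) an $H$-translation, which moves the chain region; if $p$ sits on the axis of $\sigma$ inside that region, as you place it, then $\tau=g\sigma g^{-1}$ fixes $g(p)$ rather than $p$, so neither $\tau$ nor $\varphi=\tau\sigma$ is a mapping class of $S$. This is repairable --- put $p$ in the fixed set of $\sigma$ but outside the support of the translation, and note that the swindle only conjugates by $\varphi$ and by maps supported in $\bigcup_k H_k$ --- but the repair needs to be made explicitly, and your ``half-step'' conjugate should be replaced by the full-step conjugate $\varphi_0\sigma\varphi_0^{-1}$, whose product with $\sigma$ is a translation by two (which still feeds into Lemma \ref{lem:lemma3.7_malestein_tao}).
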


\begin{rmk}
	We remark that the choice of $\eta_z$ and $\varphi$ only depends on the choice of $y$, which is an end of the perfectly self-similar part.
	Thus all such $\eta_z\varphi$ are all in the same conjugacy class.
	\begin{que}
		Suppose $S$ satisfies the assumption of Theorem \ref{thm:uncountable_case_ex}.
		Is there a normal generator that is not conjugate to $\eta_z \varphi$?
	\end{que}
	
	Secondly, there are many infinite type surfaces with uncountable end space, whose mapping class group is not topologically normally generated.
	In principle, the main obstruction of being topologically normally generated is the existence of $2$ distinct shifts from an isolated maximal end to a maximal end in a perfectly self-similar subsurface.
	If they exist, then by measuring the flux we could construct a surjective homomorphism from $\Map(S)$ to $\mathbb{Z}^2$.
	However, if the isolated maximal ends are all punctures, then we cannot detect the normal generation since there are no flux.
	\begin{que}\label{que:S-F_top_normed}
		Let $S$ be a perfectly self-similar and $F$ be a finite subset in $S$ with $|F| \geq 2$.
		Is $\Map(S - F)$ topologically normally generated?  
	\end{que}
\end{rmk}

\section{Normal generating sets}\label{sec:n_of_norm_gen}
In the previous sections, we had a continuous surjection of $\Map(S)$ onto an abelian group $G$.
If $n$ be a minimum number of generators of $G$, then $\Map(S)$ needs at least $n$ number of elements to normally generates $\Map(S)$, since each generator in $G$ has a preimage of a certain conjugacy class.

Let $M_{iso}$ be a number of distinct types of isolated maximal ends in $S$ that is not a puncture.
\begin{cor}\label{cor:lower_bound_of_normal_gens}
	$\Map(S)$ is normally generated by at least $\max(1, M_{iso}-1)$ elements.
\end{cor}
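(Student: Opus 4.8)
The plan is to exhibit, for each distinct type of non-puncture isolated maximal end, a separate $\mathbb{Z}$-valued flux invariant on $\Map(S)$, and to assemble these into a continuous surjection onto a noncyclic abelian group whenever $M_{iso} \geq 2$, so that Lemma~\ref{lem:vlamis_obs} forces at least $M_{iso}-1$ normal generators. The reason a non-puncture isolated maximal end is useful is precisely the observation in the remark closing Section~\ref{sec:uncountable}: an isolated maximal end accumulated by genus (or by nontrivial immediate predecessors) carries a \emph{flux}, detected by the homomorphisms $\Phi_z$ or by handle-shift counting built in Section~\ref{sec:cohomology}, whereas an isolated puncture carries none. First I would recall that by Proposition~\ref{prop:mannrafi_prop4.7} an isolated maximal end $x$ has finite equivalence class $E(x)$, so it lies in $F$ of Definition~\ref{defn:FMap} and the behavior of mapping classes near $x$ is controlled by the $\FMap(S)$ cohomology of Theorem~\ref{thm:semi_directprod_of_FMap}.

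Next I would set up the invariants. Index the distinct types of non-puncture isolated maximal ends as $x_1,\dots,x_{M_{iso}}$. For each such $x_i$ there is a nontrivial immediate predecessor, either a handle or some $z \in Im(x_i)$, giving a nonzero flux homomorphism $\Phi^{(i)} \colon \FMap(S) \to \mathbb{Z}$ coming from Proposition~\ref{prop:properties_of_Phi} (or the handle-shift analog). The key structural point, exactly as handled in Section~\ref{sec:parity}, is that a general $f \in \Map(S)$ may permute the isolated maximal ends of a common type, so these $\Phi^{(i)}$ are a priori defined only on $\FMap(S)$. I would promote them to $\Map(S)$ by the parity-correction device of Lemma~\ref{lem:parity_map} and Proposition~\ref{prop:parity_map}: compose with a fixed half twist to land in $\FMap(S)$ and then reduce modulo $2$ where necessary, producing a well-defined homomorphism on all of $\Map(S)$. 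Arranging the $M_{iso}$ fluxes into differences, one obtains a continuous surjection
\[
    \Map(S) \twoheadrightarrow \mathbb{Z}^{\,M_{iso}-1}
\]
(or onto a product of $\mathbb{Z}/2\mathbb{Z}$ factors of the same rank when types coincide and half twists intervene), which is noncyclic as soon as $M_{iso} - 1 \geq 2$, and noncyclic-or-cyclic bookkeeping gives the clean bound $\max(1, M_{iso}-1)$ after recording that any nontrivial group needs at least one normal generator.

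The final step is the counting principle stated at the start of Section~\ref{sec:n_of_norm_gen}: if $\Map(S)$ surjects continuously onto an abelian group requiring $k$ generators, then any topological normal generating set has at least $k$ elements, since the image of each normal generator lies in a single conjugacy class and conjugacy classes in an abelian group are singletons. Applying this to the rank-$(M_{iso}-1)$ abelian quotient yields the lower bound $M_{iso}-1$, and the trivial observation that a nontrivial group always needs at least one normal generator supplies the $\max(1,\cdot)$. The main obstacle I expect is the same subtlety that occupied Section~\ref{sec:parity}: verifying that the flux invariants descend to genuine homomorphisms on $\Map(S)$ rather than merely on $\FMap(S)$, because the permutation action on same-type maximal ends can both permute and sign-flip the individual flux entries (compare Lemmas~\ref{lem:same_conjugacy_xy} and \ref{lem:same_conjugacy_xx}). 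Since distinct \emph{types} $x_1,\dots,x_{M_{iso}}$ are never interchanged by a homeomorphism, the cross-type differences are permutation-invariant and the construction goes through; but care is needed so that the chosen target abelian group has rank exactly $M_{iso}-1$ rather than collapsing, and I would argue this by exhibiting explicit shift maps (one per type, as in the proof of Theorem~\ref{thm:Map_gen}) whose images are independent generators.
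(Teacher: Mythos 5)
Your proposal follows essentially the same route as the paper's proof: the paper likewise splits the $M_{iso}$ types into those occurring uniquely (handled by the flux homomorphisms $\Phi$ of Section \ref{sec:cohomology}, contributing a $\bigoplus_{M_1-1}\mathbb{Z}$ factor) and those occurring at least twice (handled by a mod-2 invariant, contributing $\bigoplus_{M_2}\mathbb{Z}/2\mathbb{Z}$), and then applies the same counting principle that a continuous surjection onto an abelian group needing $k$ generators forces at least $k$ topological normal generators. The only cosmetic difference is that for the repeated types the paper uses the sign of the permutation induced by the forgetful map $\Map(S) \twoheadrightarrow S_{n_1}\times\cdots\times S_{n_k}$, whereas you use parity-corrected mod-2 flux sums in the style of Section \ref{sec:parity}; both devices yield the same target group of rank $M_{iso}-1$.
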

\begin{proof}
	Define $\mathcal{M}_2(S)$ to be the set of isolated maximal types such that each end has at least $2$ different ends of such type in $S$.
	Recall that there is a forgetful map.
	\[
		\Map(S) \twoheadrightarrow S_{n_1} \times \cdots \times S_{n_k} \twoheadrightarrow \underbrace{(\mathbb{Z} / 2\mathbb{Z}) \times \cdots \times (\mathbb{Z} / 2\mathbb{Z})}_{M_2}
	\]
	and the number $(\mathbb{Z} / 2\mathbb{Z})$ on the rightmost term is equal to $M_2 := |\mathcal{M}_2(S)|$.
	
	There are $M_1 := M_{iso} - M_2$ number of isolated maximal ends which are unique of this type in $S$.
	Denote such ends as $x_1, \cdots, x_{M_1}$.
	By the small zoom condition, there must exist $z_1, \cdots, z_{M_1}$, where each $z_i$ is an immediate predecessor of $x_i$ and $\eta_{z_i}$ is a shift map pushing $z_i$ towards to $x_i$.
	Since all $x_i$'s are distinct from each other, we can choose at least $M_1 -1$ number of $\eta_{z_i}$'s that any two of them are not in the same conjugacy class.
	Consider the $\Phi$ map as constructed in Section \ref{sec:cohomology}, which only measures the $\eta_{z_i}$ flux.
	
	Combine the forgetful map and $\Phi$.
	We have 
	\[
		\Map(S) \twoheadrightarrow \bigoplus_{M_1-1} \mathbb{Z} \times \bigoplus_{M_2} (\mathbb{Z} / 2\mathbb{Z})
	\]
	Therefore, we need at least $M_{iso}-1$ elements to generate $\Map(S)$.   
\end{proof}

To generate $\Map(S)$ we first collect all generalized shifts first.
Define $C$ as below.
\[
	C := \max_{A, B \in \mathcal{M}(S)} |E_{cp}(A, B)|
\]
Since $S$ is tame and $\Map(S)$ is CB generated, $C$ is a finite integer which only depends on the topology of $S$.
\begin{prop}\label{prop:upper_bound_of_shifts}
	The number of generalized shift is bounded by $CM$.
\end{prop}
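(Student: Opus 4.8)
The plan is to count generalized shifts only up to the level of end \emph{types}, and then to read off the count directly from the splitting already established for $\FMap(S)$. First I would pass to types: by Lemmas \ref{lem:same_conjugacy_xy} and \ref{lem:same_conjugacy_xx}, conjugating a shift by the fixed half twists moves its endpoints among maximal ends of the same type (and, for same-type endpoints, inverts it), so up to conjugacy a generalized shift $\eta_{A,B,z}$ is determined only by the unordered pair of maximal end types $\{[A],[B]\}$ and the predecessor type $[z]$. Since conjugate elements have the same normal closure, it suffices to bound the number of such conjugacy classes that must be retained in a generating set. Fix representatives $x_1,\dots,x_M$ of the $M$ maximal end types.

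Next I would invoke the structure theorems. By Theorems \ref{thm:Theta_z} and \ref{thm:Theta}, the shift part of $\FMap(S)$ splits as $\bigoplus_z \mathbb{Z}^{N_z-1}$, where the sum runs over predecessor types $z$ common to at least two maximal ends and $N_z$ is the number of maximal end types admitting $z$ as a common immediate predecessor. For a single $z$, the factor $\mathbb{Z}^{N_z-1}$ is generated by the $N_z-1$ shifts joining a chosen base type to the remaining types that admit $z$; and since shifts for distinct predecessor types commute, no relations intervene between different $z$. Consequently the number of generalized shifts that must be included is exactly $\sum_z (N_z-1)$.

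It then remains to bound $\sum_z(N_z-1)$ by $CM$. I would reorganize the sum by maximal end type: attributing each of the $N_z-1$ generators of the $z$-factor to its non-base endpoint, the total equals $\sum_{k=1}^M r_k$, where $r_k$ counts the predecessor types $z$ for which $x_k$ is a common predecessor (with some other type) and is not chosen as base. Every such $z$ lies in $\bigcup_{j} E_{cp}(x_k,x_j)\subseteq Im(x_k)$, so the goal reduces to controlling, for each fixed $x_k$, the number of distinct common-predecessor types it carries, and arranging the base choices so that this contributes at most $C$ to the count at $x_k$.

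The main obstacle is precisely this last bound, and it is where the topology of $S$ must enter rather than soft group theory: the total $\sum_z(N_z-1)$ is invariant under all base choices, so the inequality cannot be engineered by clever rooting and must instead reflect genuine sparsity of the incidence between maximal end types and common-predecessor types. A priori a single maximal end could share a distinct predecessor type with each of many partners, which would force $\sum_z(N_z-1)$ to grow like $C\binom{M}{2}$ rather than $CM$. I expect to rule this out using Mann--Rafi's interface sets $W_{A,B}$ (Lemma 6.10 of \cite{mann2023large}) together with the small zoom condition of Theorem \ref{thm:Mann-Rafi_locCB}: these should organize the common predecessors of $x_k$ so that their distinct types are all already accounted for within the bound $C=\max_{A,B}|E_{cp}(A,B)|$, which is exactly the input needed to sum $r_k\le C$ over the $M$ types and conclude $\sum_z(N_z-1)\le CM$.
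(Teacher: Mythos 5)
Your reduction is fine as far as it goes: the conjugacy lemmas and the splittings of Theorems \ref{thm:Theta_z} and \ref{thm:Theta} do reduce the proposition to the combinatorial claim $\sum_z (N_z-1)\le CM$, and your observation that this total is independent of the choice of base types (so no clever rooting can produce the inequality) correctly locates the difficulty. But that is exactly where your argument stops. The decisive step --- that for each maximal end type $x_k$ the union $\bigcup_{j} E_{cp}(x_k,x_j)$ contains at most $C$ inequivalent predecessor types --- is never proved; you only state that you \emph{expect} the sets $W_{A,B}$ of Definition \ref{defn:W_AB} and the small zoom condition of Theorem \ref{thm:Mann-Rafi_locCB} to deliver it. They do not obviously do so: $C=\max_{A,B}|E_{cp}(A,B)|$ bounds each pair separately, whereas the needed statement is about the union over all partners of a fixed end, and both $W_{A,B}$ and the zoom condition are likewise pairwise conditions. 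So the proposal is not a proof; it is a reduction to an unproved inequality.

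You should also know that the paper's own proof of Proposition \ref{prop:upper_bound_of_shifts} consists precisely of asserting, in one sentence, the claim you could not establish: that each maximal end type receives at most $C$ generalized shifts. So you have in effect reconstructed the paper's strategy while (rightly) flagging its load-bearing step --- but neither you nor the paper justifies that step, and it does not follow from tameness or CB generation alone. Indeed, one can assemble a tame surface with CB generated mapping class group having four pairwise inequivalent maximal end types joined pairwise by six pairwise inequivalent common immediate predecessor types: starting from two incomparable minimal types (an end accumulated by punctures and an isolated end accumulated by genus), one builds six inequivalent self-similar predecessor types and lets each accumulate to exactly one pair of maximal ends. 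For such a surface $C=1$ and $M=4$, yet there are $\binom{4}{2}=6$ pairwise non-conjugate shifts, and Theorem \ref{thm:Theta} even yields a surjection of $\Map(S)=\FMap(S)$ onto $\mathbb{Z}^{6}$, so no collection of four shifts can account for them. Consequently the inequality you defer to Mann--Rafi's machinery genuinely fails in the stated generality; completing your proposal would require replacing $C$ by a constant that actually bounds the per-end count, such as $\max_{x}\lvert(\bigcup_{y} E_{cp}(x,y))/\sim\rvert$, rather than finding a cleverer argument for the bound as written.
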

\begin{proof}
	For each type of maximal end $x$ of $S$, there are at most $C$ number of generalized shifts (up to conjugating the maximal ends of the same type) which send an immediate predecessor towards to $x$.
	Thus, the number of generalized shift is bounded by $CM$.
\end{proof}
Let $\mathcal{S}$ to be the collection of the following mapping classes.
\begin{enumerate}
	\item Suppose $x, y \in \mathcal{M}(S)$ and both $x, y$ are isolated.
	Then for any $z \in E_{cp}(x, y)$, $\eta_z \in \mathcal{S}$.
	\item Suppose $x, y \in \mathcal{M}(S)$ and $E(x)$ is a Cantor set.
	Choose one H-translation map $\varphi$ for $x$ whose closure of ends of the support is not 
	Then for any $z \in E_{cp}(x, y)$, $\eta_z \varphi \in \mathcal{S}$.
\end{enumerate}
By Proposition \ref{prop:upper_bound_of_shifts}, $|\mathcal{S}| \leq CM$.

Let $\mathcal{M}$ be the set of maximal ends of $S$.
For every $x \in \mathcal{M}$, $E(x)$ is either finite or a Cantor set.
Hence $\mathcal{M}$ is partitioned into two set, $\mathcal{M}_u := \{x \in \mathcal{M} ~|~ |E(x)| < \infty\}$ and $\mathcal{M}_u := \{x \in \mathcal{M} ~|~ E(x) \text{ is a Cantor set}\}$.
We decompose $S$ as follows.
\begin{enumerate}
	\item Choose a closed subsurface $L$ embedded in $S$ such that $S - \text{int}(L)$ is a disjoint union of finitely many subsurfaces $\{A_i\}$ satisfying that
	\begin{itemize}
		\item The maximal ends of each $A_i$ are of the same type.
		\item If $x \in \mathcal{M}$ and $x\in \mathcal{M}(A_i)$, then for any $y \sim x$, $y \in \mathcal{M}(A_i)$.
	\end{itemize}
	\item Let $K$ be a closed subsurface such that $S - \text{int}(K)$ is a disjoint union of finitely many subsurfaces $\{B_i\}$, satisfying that 
	\begin{itemize}
		\item $L \subset K$.
		\item Every connected component of $S - \text{int}(K)$ is a self-similar surface with open disk deleted, and the union of the maximal ends of the components is equal to $\mathcal{M}(S)$. 
		\item  If $x \in \mathcal{M}_p$ and $x \in \mathcal{M}(B_j)$, then $B_j$ is a perfectly self-similar surface with open disk deleted, and there are exactly $2$ more subsurfaces $B_l, B_m$ which are both homeomorphic to $B_j$.
	\end{itemize}
\end{enumerate}
Remark that the surface decomposition by $K$ satisfies the condition in Proposition \ref{prop:prop5.4_mannrafi}.

\begin{figure}[h]
	\centering
	\includegraphics[width=.5\textwidth]{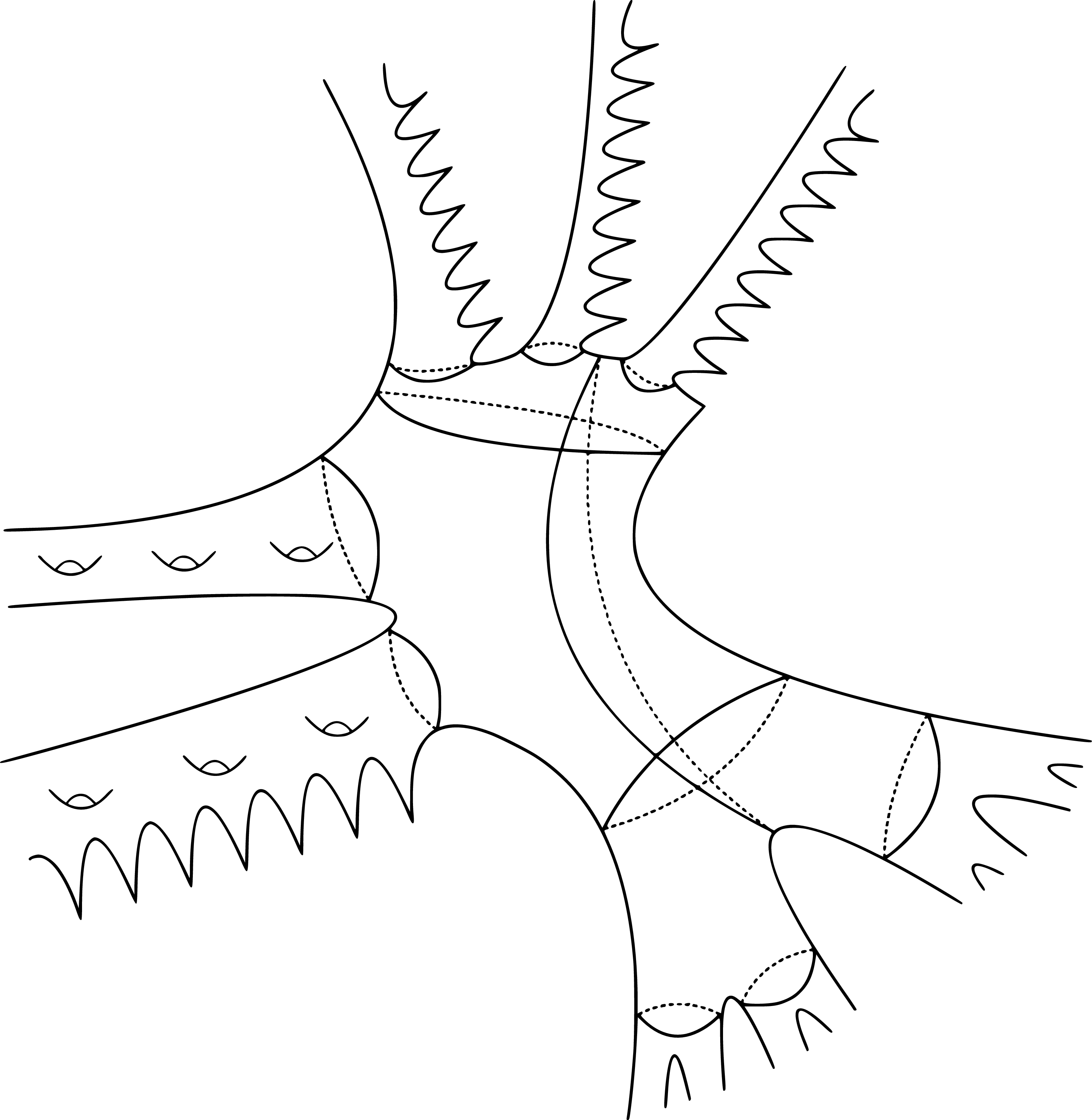}
	\caption{The surface decomposition. The curve in the middle which passes two boundaries of $L$ is an example of $\gamma$ in the proof of Proposition \ref{prop:Map(K)}.}
	\label{fig:uncountable_surface_decomp}	
\end{figure}

\begin{prop}\label{prop:dehn_along_boundaries}
	Let $K$ be the subsurface as above.
	Every Dehn twist along the boundary of $K$ is in $\overline{\langle\langle \mathcal{S} \rangle\rangle}$.
\end{prop}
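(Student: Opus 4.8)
The plan is to treat each boundary component of $K$ separately, since a product of elements of a normal subgroup again lies in that subgroup and $\topnorm{\mathcal{S}}$ is a closed normal subgroup. So fix one component $c$ of $\partial K$. By the construction of $K$, the curve $c$ bounds a complementary self-similar region $B$ whose maximal ends are all of a single type $x$, and $c$ separates $x$ from every other maximal end of $S$. I would realize the boundary twist $T_c$ by a \emph{swindle} driven by a shift that translates a collar of $c$ off toward $x$.

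First I would locate such a shift inside $\topnorm{\mathcal{S}}$. The small zoom condition forces, as in the proof of Corollary \ref{cor:lower_bound_of_normal_gens}, the existence of another maximal end $y$ and a common immediate predecessor $z \in E_{cp}(x,y)$, together with the full shift $\eta_z = \eta_{x,y,z}$ pushing $z$ toward $x$. Depending on whether $x$ and $y$ are isolated or of Cantor type, either $\eta_z \in \mathcal{S}$ by definition, or $\eta_z$ is recovered from an element $\eta_z\varphi \in \mathcal{S}$ by Lemma \ref{lem:uncountable_section,etavar}; in all cases $\rho := \eta_z \in \topnorm{\mathcal{S}}$. Since $c$ separates $x$ from $y$, Lemma \ref{lem:gamma_intersects_B} guarantees that $c$ essentially meets the biinfinite strip associated to $z$, so $\rho$ genuinely translates $c$ along that strip toward $x$.

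The swindle then proceeds as in Proposition \ref{prop:swindle} and Lemma \ref{lem:lemma3.7_malestein_tao}, but applied to the twisting curve rather than to a compactly supported homeomorphism. Set $c_0 := c$ pushed slightly into $B$ and $c_k := \rho^k(c_0)$; these curves are pairwise disjoint and nest down to $x$, so the twists $T_{c_k} = \rho^k T_c \rho^{-k}$ have pairwise disjoint annular supports and commute. Hence the infinite product $\widetilde{T} := \prod_{k \geq 0} \rho^k T_c \rho^{-k}$ converges in $\Map(S)$, and a telescoping computation yields $T_c = \widetilde{T}\,\rho\,\widetilde{T}^{-1}\rho^{-1}$. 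As $\rho \in \topnorm{\mathcal{S}}$ and $\topnorm{\mathcal{S}}$ is a closed normal subgroup, this commutator lies in $\langle\langle \rho\rangle\rangle \subseteq \topnorm{\mathcal{S}}$, so $T_c \in \topnorm{\mathcal{S}}$.

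The main obstacle is the geometric bookkeeping of the third step: arranging the collar curves $c_k = \rho^k(c_0)$ to be genuinely pairwise disjoint and to accumulate only to $x$, so that $\widetilde{T}$ is a well-defined element of $\Map(S)$. In the Cantor case one must additionally verify that the half-space and translation map underlying $\eta_z\varphi$ can be chosen so that the hypotheses of Lemma \ref{lem:uncountable_section,etavar} hold, namely that the support of $\varphi$ is disjoint from that of $\eta_z$ and that $y$ is not an end of the chosen half-space, so that the extraction of $\rho = \eta_z$ from $\mathcal{S}$ is legitimate.
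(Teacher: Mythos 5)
There is a genuine gap, and it is exactly at the point you flag as ``geometric bookkeeping'': with a single shift $\rho = \eta_z$ the curves $c_k = \rho^k(c_0)$ can \emph{never} be arranged to be pairwise disjoint and locally finite, except in the special case where the complementary piece has a unique maximal end $x$ with $|Im(x)| = 1$. The reason is that $\rho$ only moves the $z$-type ends, so every $c_k$ must keep on its inside all the \emph{other} ends accumulating to $x$ (the other types in $Im(x)$, or, in the Cantor case, the Cantor set of maximal ends of the perfectly self-similar piece), while excluding more and more $z$'s. If the $c_k$ were disjoint and locally finite, the nested closed regions they bound would have empty total intersection (a clopen-set argument in the connected surface $S$), hence would eventually leave every compact set; but each region is connected and contains deep neighborhoods of at least two distinct ends that are retained forever (e.g.\ $x$ and a fixed $w$-type end, or two distinct Cantor ends), so each region must cross the fixed compact curve $\partial U$ of a small neighborhood $U$ of one of those ends. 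This contradiction shows $\widetilde{T} = \prod_k \rho^k T_c \rho^{-k}$ is not a well-defined mapping class, so the commutator identity $T_c = \widetilde{T}\rho\widetilde{T}^{-1}\rho^{-1}$ has no meaning. This is not a technicality one can fix by a cleverer isotopy; the single-shift swindle is structurally wrong whenever $|Im(x)|\geq 2$ or the boundary cuts off a perfectly self-similar piece.

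The paper avoids both failures by splitting into two cases with different tools. For a boundary cutting off a perfectly self-similar piece, it does not run a curve swindle at all: the twist $T_\gamma$ is supported in a half-space $H$, and Lemma \ref{lem:uncountable_section,etavar} together with the half-space swindle of Lemma \ref{lem:lemma3.7_malestein_tao} (normal closure of an $H$-translation contains $\Homeo(H,\partial H)$) puts $T_\gamma$ in $\topnorm{\mathcal{S}}$. For a boundary cutting off an isolated maximal end $x$, it drives the swindle not with one $\eta_z$ but with $\eta_x := \prod_{z \in Im(x)} \eta_z$, the product over \emph{all} immediate predecessor types of $x$; only then do the iterated curves $\gamma_{i+1} = \eta_x(\gamma_i)$ have end-sets shrinking to $\{x\}$ alone, so they form a locally finite disjoint family escaping to the single end $x$, and the infinite product of twists converges. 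Your proposal would need both of these modifications to be repaired.
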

\begin{proof}
	Let $T_\gamma$ be a Dehn twist along the curve $\gamma$.
	If the boundary $\gamma$ bounds perfectly self-similar subsurface, then by Lemma \ref{lem:uncountable_section,etavar} $\overline{\langle\langle \mathcal{S} \rangle\rangle}$ contains the Dehn twist along the boundary, since it is supported in some half space.
	
	Let $\gamma$ be the boundary which bounds the isolated maximal end, say $x$.
	We will use the \emph{swindle}.
	Let $\eta_x$ to be a product of $\eta_z$, which tends to $x$ for all $z \in Im(x)$.
	Consider the separating curves $\{\gamma_i\}_{i \geq 1}$ such that $\gamma_1$ is the boundary of $K$ and 
	$\gamma_{i+1} = \eta_x(\gamma_i)$.
	Let $\widetilde{T_{\gamma}}:= \prod_{i = 1}^\infty T_{\gamma_i}$.
	Then $T_{\gamma_1} = \eta_x \widetilde{T_{\gamma}}^{-1} \eta_x^{-1} \widetilde{T_{\gamma}}$.
	Hence $T_{\gamma_1}$ is in the normal closure of $\eta_x$, which is a product of shifts in $\mathcal{S}$.
\end{proof}

\begin{prop}\label{prop:Map(K)}
	Let $G$ be a subgroup of $\Map(K, \partial K)$ whose elements induce the mapping class in $\Map(S)$, except the Dehn twists along the boundary of $K$.
	Then $G$ is normally generated by $M(M-2)$ number of elements.
\end{prop}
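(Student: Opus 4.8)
The plan is to recognize $G$ as a finite-type mapping class group and then prune a Dehn-twist generating set using the half-twists and the swindle. Since $K$ is planar with $b$ boundary components (one for each complementary piece $B_i$ produced by the decomposition of Proposition~\ref{prop:prop5.4_mannrafi}), $\Map(K,\partial K)$ is generated by Dehn twists, and modulo the central subgroup generated by the boundary twists---which are excluded from $G$ and already handled in Proposition~\ref{prop:dehn_along_boundaries}---it is isomorphic to the pure mapping class group of the $b$-times punctured sphere $S_{0,b}$. Extension by the identity embeds this quotient into $\Map(S)$, because each boundary of $K$ bounds a non-disk self-similar piece, so no interior twist becomes trivial in $S$. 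I then partition the $b$ punctures into $M$ classes according to the type of maximal end of the piece they bound.

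First I would reduce to twists around pairs: the pure mapping class group of $S_{0,b}$ is generated by the elementary twists $A_{ij}$ around pairs of punctures, and a twist around any larger subset of punctures is a product of these via the lantern relation. Next I would discard the same-type twists. If punctures $i,j$ lie in the same class, the complementary piece carrying them is self-similar, so a representative of $A_{ij}$ is supported in a half-space; the swindle of Proposition~\ref{prop:swindle} and Lemma~\ref{lem:lemma3.7_malestein_tao} then places $A_{ij}$ in the normal closure of a shift in $\mathcal{S}$. Hence the same-type twists cost nothing, and only the cross-type twists remain to be counted.

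For the cross-type twists I would exploit two further moves. Conjugation by the half-twists $g_{ij}$ that interchange homeomorphic pieces permutes the punctures within each class, so all twists $A_{ij}$ attached to a fixed unordered pair of types are mutually conjugate. The puncture relations $\prod_{j\neq i}A_{ij}=T_{\partial_i}$, which become trivial once the boundary twists are set aside, then let me express, for each type, the twists joining it to two chosen reference types in terms of the remaining ones. Selecting for each of the $M$ types its cross-type twists to all but two of the other types produces $M(M-2)$ elements whose normal closure, together with $\mathcal{S}$ and the half-twists, recovers every $A_{ij}$ and hence all of $G$.

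The main obstacle is the bookkeeping in this last step: verifying that the explicit family of $M(M-2)$ twists, combined with same-type conjugation and the lantern and puncture relations, genuinely generates the full normal closure, and that exactly two twists per type---no fewer and no more---can be eliminated. This is delicate because the abelianization of the cross-type quotient only forces on the order of $\binom{M}{2}$ normal generators, so $M(M-2)$ is a constructive upper bound rather than a minimal count; I must therefore ensure that the chosen generators stay connected across all pairs of types and that none of the relations relied upon here has already been consumed in discarding the same-type twists.
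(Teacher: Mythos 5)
Your reduction of $G$ to the pure mapping class group of the $b$-punctured sphere and to pair twists $A_{ij}$ is fine, but the step that discards the same-type twists fails in an essential case. If $x$ is an isolated maximal end type with $2\le |E(x)|<\infty$ (a Jacob's-ladder-like block, which the paper's decomposition explicitly allows: all ends equivalent to $x$ lie in a single component of $S-\mathrm{int}(L)$), then the "complementary piece carrying" two punctures of type $x$ has two or more maximal ends, and by Definition \ref{defn:self-similar} (via Proposition \ref{prop:mannrafi_prop4.7}) such a piece is \emph{not} self-similar: it admits no half-spaces and no $H$-translation map, so Lemma \ref{lem:lemma3.7_malestein_tao} simply does not apply to it. Nor can you substitute the shift swindle: the twist around two such pieces is not supported in any strip $\mathcal{B}_z$ and is not finitely bounded at any single maximal end in the sense of Definition \ref{defn:finite_bound}, so Proposition \ref{prop:swindle} and Lemma \ref{lem:shift_makes_finite_bound_map} are unavailable as well. (Your argument is correct only for Cantor types, where the ambient piece is perfectly self-similar.) The paper handles same-type configurations by a different mechanism entirely: half twists supported in $K-\mathrm{int}(L)$ used as conjugators, together with lantern relations, never by half-space swindles.

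Your final count also does not cohere. Keeping, for each of the $M$ types, its cross-type twists "to all but two of the other types" yields $M(M-3)$ ordered pairs (or $M(M-3)/2$ unordered ones), not $M(M-2)$; and the proposed elimination via the puncture relations $\prod_{j\ne i}A_{ij}=T_{\partial_i}$ is circular, since those relations involve exactly the same-type twists whose removal is gapped above, alongside the trivialized single twists. In fact no elimination is needed: the natural output of your approach, one conjugacy class per unordered pair of distinct types, is $\binom{M}{2}=M(M-1)/2\le M(M-2)$ for $M\ge 3$, and normal generation "by $M(M-2)$ elements" is only an upper bound, so producing fewer generators is harmless. This is also where your route genuinely diverges from the paper's: the paper takes one pair-of-pants twist for each pair of boundary components of $L$ (curves enclosing \emph{all} pieces of two given types, not one piece of each), plus $M(M-3)/2$ twists along separating curves inside $L$, and disposes of everything else with lantern relations. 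If you repair the same-type step (say, by expressing those twists through half twists as the paper does, or through lantern relations anchored at a cross-type piece together with the boundary twists of Proposition \ref{prop:dehn_along_boundaries}), your $\binom{M}{2}$-generator version would be a cleaner and slightly stronger statement; as written, however, the argument does not close.
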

\begin{proof}
	Note that $K$ is a sphere deleted with finitely many pairwise disjoint open disks. 
	Hence the subgroup $G$ is generated by Dehn twists along the boundaries and the half and full Dehn twists with a few exceptions.
	Such exceptions are the mapping classes related to the half twists along the curve intersecting the boundary of $L$, so that the maximal ends beyond the curve are not of the same type.
		
	Suppose $\gamma$ is a separating curve that intersects $2$ boundaries of $L$ and one of the connected components of $K - \gamma$ is a pair of pants.
	See also Figure \ref{fig:uncountable_surface_decomp}.
	The conjugacy class of $T_\gamma$ depends only on which boundaries $\gamma$ meets.
	This is because, if we have another $\gamma'$, then there are appropriate half twists $g, g'$ supported in $K-\text{int}(L)$ so that $gg'(\gamma) = \gamma'$.
	Therefore we only need $\frac{M(M-1)}{2}$ curves for each pair of connected components of $K-\text{int}(L)$ .
	Also if we choose another curve intersecting $2$ boundaries of $L$, then the Dehn twist along the curve can be generated by the lantern relation.
	Again by the lantern relation, the Dehn twist along any boundary of $L$ is generated by the half twists which are supported on the component of $K - \text{int}(L)$ which contains the boundary.
	
	The remaining part is $T_\gamma$ for separating curve $\gamma$ contained in $L$.
	Since $L$ is a sphere deleted with $M$ number of pairwse disjoint open disks, we only need $\frac{M(M-3)}{2}$ number of Dehn twists.
	Therefore, we have $\frac{M(M-1)}{2} + \frac{M(M-3)}{2} = M(M-2)$ number of generators to generate $G$.
\end{proof}
Let $\mathcal{D}$ denote the set of Dehn twists we choose in Proposition \ref{prop:Map(K)}.

For handle shifts, there are many ways to construct a handle shift which are not conjugate to each other.
For example, consider one maximal end $x_1$ that is accumulated by both punctures and genus, and another maximal end $x_2$ is accumulated by the ends that is accumulated by genus.

Let $h_1$ be a handle shift which shifts handles from $x_1$ to $x_2$, and $h_2$ be a handle shift which shifts handles from $x_1$ to the immediate predecessors of $x_2$.
Since the target end of $h_1$ and $h_2$ are not the same, two cannot be in the same conjugacy class.

Still, every handle shift is normally generated with finitely many handle shifts.
\begin{prop}\label{prop:upper_bound_of_handle_shifts}
	Any handle shift can be normally generated by $M$ number of handle shifts.
\end{prop}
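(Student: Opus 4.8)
The plan is to reduce every handle shift to a small family of shifts emanating from one fixed base. If no maximal end lies in $E^G$ there are no handle shifts and the statement is vacuous, so assume some maximal end is accumulated by genus and fix a base $A_0 \in \mathcal{M}(S)\cap E^G$. For each type of maximal end $x$ accumulated by genus I would choose a \emph{single} generator $h_x$, namely a handle shift from $A_0$ to a genus-accumulated immediate predecessor $z$ of $x$ when one exists, and to $x$ itself otherwise; this gives at most $M$ generators. The two engines of the argument are the composition identity $h_{p,q}\,h_{q,r}=h_{p,r}$, valid modulo a finitely bounded map since the two ladders agree off a compact piece, together with the swindle of Proposition~\ref{prop:swindle}, which places every finitely bounded map supported along a handle-shift ladder into the topological normal closure of that shift. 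Throughout I use that conjugation is permitted by arbitrary elements of $\Map(S)$, so handle shifts whose ordered pairs of accumulation points have equal $\sim$-types are conjugate and cost nothing extra.

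First I would treat handle shifts both of whose accumulation points are maximal ends. For such $h_{A,B}$ the composition identity gives $h_{A,B}=h_{A_0,A}^{-1}\,h_{A_0,B}$ up to a finitely bounded correction; the two factors lie in the normal closure of the chosen family by construction (after conjugating $A,B$ to the selected representatives of their types), and the correction lies there by the swindle, so $h_{A,B}\in\topnorm{\{h_x\}}$. This settles all maximal-to-maximal shifts using only the shifts from $A_0$.

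The crux is Case~2, where an accumulation point is a genus-accumulated immediate predecessor $z\prec x$ rather than a maximal end, exactly the phenomenon distinguishing $h_1$ from $h_2$ above and genuinely non-conjugate to the maximal-to-maximal shifts because type is preserved under conjugation. Writing the copies $z^{(n)}\to x$ of the $z$-type predecessor clustering at $x$, I would first build the \emph{unit} shift $h_{z^{(0)},z^{(1)}}=h_{A_0,z^{(0)}}^{-1}\bigl(\eta'\,h_{A_0,z^{(0)}}\,\eta'^{-1}\bigr)$ modulo a finitely bounded map, where $\eta'\in\Map(S)$ is a generalized shift fixing $A_0$ and sending $z^{(0)}\mapsto z^{(1)}$; since $h_{A_0,z^{(0)}}$ is conjugate to the generator $h_x$, this places $h_{z^{(0)},z^{(1)}}$ in $\topnorm{h_x}$. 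Then I would recover the predecessor-to-maximal shift by a telescoping swindle $h_{z^{(0)},x}=\prod_{n\ge 0}\eta^{n}\,h_{z^{(0)},z^{(1)}}\,\eta^{-n}$, with $\eta$ the generalized shift pushing $z^{(n)}\mapsto z^{(n+1)}$, and finally obtain $h_{A_0,x}=h_{A_0,z}\,h_{z,x}$ modulo finitely bounded maps. The hard part, and where I expect the real work to lie, is twofold: verifying that this telescoping product converges in $\Map(S)$ (the ladders between consecutive clusters are locally finite away from the end $x$, so only finitely many factors act near any point of the surface) and identifying the finitely bounded remainders that the composition identity introduces, and then confirming that a single generator per maximal-end type genuinely suffices so that the total count never exceeds $M$.

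Combining the two cases, every handle shift lies in the topological normal closure of the chosen family of at most $M$ handle shifts. I would close by remarking that, just as for the spontaneous shifts of Section~\ref{sec:shifts}, the closure must be taken topologically, since the telescoping product in Case~2 realizes the desired shift only as a limit of finitely bounded maps rather than as a finite word.
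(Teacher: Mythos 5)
Your overall architecture (anchor at a base end $A_0$, then use composition identities plus swindles) is workable in spirit, but the pivotal step of your Case~2 contains a genuine error: the telescoping identity $h_{z^{(0)},x}=\prod_{n\ge 0}\eta^{n}\,h_{z^{(0)},z^{(1)}}\,\eta^{-n}$ is false as an identity in $\Map(S)$. For the infinite product to be well defined you must make the conjugated ladders $\eta^n(L_0)$ essentially pairwise disjoint (local finiteness of the family of supports is not enough: if the ladders overlap head-to-tail, a point in the overlap is pushed forward by every subsequent factor and has no stable image). But once the ladders are disjoint, the product is the simultaneous composition of infinitely many handle shifts, each of which \emph{preserves} its own ladder between $z^{(n)}$ and $z^{(n+1)}$: a handle in $L_0$ is pushed toward $z^{(1)}$ and stays in $L_0$ forever, whereas $h_{z^{(0)},x}$ pushes it toward $x$. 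Concretely, $h_{z^{(0)},x}^{-1}\cdot\bigl(\prod_{n}\eta^{n}h_{z^{(0)},z^{(1)}}\eta^{-n}\bigr)$ shifts all but finitely many handles of every intermediate ladder $L_n$, so it is a nontrivial mapping class; the two sides of your identity merely have equal flux across every fixed separating curve, so they differ by a nontrivial zero-flux element (a limit of finitely bounded maps, not a finitely bounded one). Your proof asserts this correction term does not exist; recognizing it and absorbing it into the topological normal closure via the swindle is exactly the missing work, and it is not covered by the ``finitely bounded remainders'' you flag for the composition identity.

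The paper sidesteps the infinite relay entirely by distributing the generators differently: for each maximal end it chooses one handle shift \emph{emanating from that end} into an immediate predecessor of some maximal end (its Case~(1)). With that choice every other handle shift is a \emph{finite} composition: a shift between maximal ends $A$ and $B$ is $h_{A\to z}\circ h_{B\to z}^{-1}$ up to a finitely bounded correction, and shifts with endpoints that are neither maximal nor immediate predecessors --- a case your two-case analysis omits altogether --- are Case~(1) shifts composed with finitely bounded handle shifts, which Lemma~\ref{lem:shift_makes_finite_bound_map} places in the normal closure of generalized shifts. Anchoring all $M$ generators at the single end $A_0$ is precisely what forces your telescoping construction: having no generator emanating from $x$, you cannot reach a shift terminating at $x$ by any finite word in your generators and their conjugates (no finite product of conjugates of $h_{A_0,z}$ carries flux into $x$). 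If you keep your count of $M$ but choose one generator per maximal end as the paper does, both the false identity and the convergence issues disappear.
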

To prove Proposition \ref{prop:upper_bound_of_handle_shifts}, we need additional lemmas.

Suppose $x$ is an isolated maximal end of an infinite type surface $S$ with CB generated $\Map(S)$.
Under the surface decomposition the uniquely self-similar subsurface $\Sigma_x$ for $x$ is a countable connected sum of a surface $W_{x}$, where $W_x$ is a connected sum of $2$-sphere with $2$ boundaries and $|Im(x)|$ number of self-similar surfaces that each maximal end of the subsurface is one of immediate predecessors of $x$. \footnote{If the handle is in $Im(x)$, then we add one genus in $W_x$.}

We refer to $W^n_x$ as $n \geq \mathbb{Z}_{\geq 0}$ and to boundaries as $\partial_+^n$, $\partial_-^n$.
We fix the orientation reversing homeomorphism $\lambda : \partial_+^n \to \partial_-^n$ once and for all.
Then $\Sigma_x$ can be made by attaching $\partial_+^n$ to $\partial_-^{n+1}$ along $\lambda$ for all $n \geq \mathbb{Z}_{\geq 0}$.
Let $c_n$ be an image of $\partial_+^n$ in $\Sigma_x$.

\begin{defn}\label{defn:finite_bound}
	$f \in \FMap(\Sigma_x)$ is \emph{finitely bounded} if there are integers $k_1, k_2 > 0$ and a representative of $f$ whose support is bounded by a subsurface cobounded by $c_{k_1}$ and $c_{k_2}$.
	Suppose $S$ is an infinite type surface and $\Map(S)$ is CB generated.
	Let $x$ be one of the maximal end of $S$. 
	$f \in \FMap(S)$ is \emph{finitely bounded at $x$} if $f$ is induced by a finitely bounded map under inclusion of $\Sigma_x \xhookrightarrow{} S$.
\end{defn}
For example, any generalized shift is not finitely bounded.
Also, if $f$ is finitely bounded, then for any $z$, $\Phi_z(f) = 0$.

\begin{lem}\label{lem:shift_makes_finite_bound_map}
	Suppose $f$ is finitely bounded on $x$.
	Then $f$ is normally generated by generalized shifts and handle shifts.
\end{lem}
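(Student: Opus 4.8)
The plan is to run the swindle argument (as in Proposition \ref{prop:swindle} and Lemma \ref{lem:lemma3.7_malestein_tao}) along the telescope structure of $\Sigma_x$, using a one–step shift towards $x$ as the conjugating element. First I would fix such a shift $\sigma$: following the construction of $\eta_x$ in the proof of Proposition \ref{prop:dehn_along_boundaries}, let $\sigma$ be the product of the generalized shifts $\eta_z$ over all $z \in Im(x)$, together with a handle shift if a handle belongs to $Im(x)$. By construction $\sigma$ translates the telescope one block towards $x$, so that $\sigma(c_n) = c_{n+1}$ and $\sigma^N$ carries the subsurface cobounded by $c_{k_1}, c_{k_2}$ onto the one cobounded by $c_{k_1+N}, c_{k_2+N}$, which sits in ever smaller neighborhoods of $x$ as $N$ grows. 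The key point to retain is that $\sigma$ is a \emph{finite} product of generalized shifts and handle shifts.

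Next, since $f$ is finitely bounded on $x$ (Definition \ref{defn:finite_bound}), I fix a representative supported in the subsurface cobounded by $c_{k_1}$ and $c_{k_2}$ and set $N := k_2 - k_1 + 1$. Then the conjugates $\sigma^{Ni} f \sigma^{-Ni}$ for $i \geq 0$ have pairwise disjoint supports accumulating only at $x$, so the repetition map
\[
	\widetilde{f} := \prod_{i=0}^{\infty} \sigma^{Ni} f \sigma^{-Ni}
\]
is defined. Verifying that $\widetilde{f}$ is a genuine homeomorphism of $S$ is the step requiring care: away from $x$ only finitely many factors are nontrivial, so $\widetilde{f}$ is locally a finite composition and hence continuous, while at the end $x$ it fixes $x$ and is continuous because the tail factors are supported in arbitrarily small neighborhoods of $x$. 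As in the swindle, the factors commute (disjoint supports), whence $\sigma^N \widetilde{f} \sigma^{-N} = f^{-1}\widetilde{f}$ and therefore
\[
	f = \widetilde{f}\, \sigma^{N} \widetilde{f}^{-1} \sigma^{-N} = [\widetilde{f}, \sigma^{N}],
\]
so that $f \in \langle\langle \sigma \rangle\rangle$.

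Finally I would upgrade from the normal closure of the single element $\sigma$ to the normal closure of the whole shift set. Let $\mathcal{N}$ be the normal closure in $\Map(S)$ of the collection of all generalized shifts and handle shifts. Then $\mathcal{N}$ is a normal subgroup containing each factor of $\sigma$, hence contains $\sigma$, and therefore contains $\langle\langle \sigma \rangle\rangle$. Combining with the previous display gives $f \in \mathcal{N}$, which is exactly the assertion that $f$ is normally generated by generalized shifts and handle shifts.

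The main obstacle is the first step: arranging that $\sigma$ genuinely acts as a single–block translation $c_n \mapsto c_{n+1}$ of the telescope, i.e.\ that the generalized shifts over $Im(x)$ and the handle shift can be chosen with compatible, nested supports so that their product carries each building block $W^n_x$ onto $W^{n+1}_x$. Once this realization is in hand, the disjointness of the conjugates' supports, the well-definedness of $\widetilde{f}$ at the end $x$, and the commutator identity are all routine instances of the swindle already established in the paper.
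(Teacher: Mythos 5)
Your proposal is correct and takes essentially the same approach as the paper: a swindle along the telescope structure of $\Sigma_x$, conjugating $f$ by a power of the product $\sigma = \prod_{z \in Im(x)} \eta_z$ of shifts towards $x$ (with the handle shift included when a handle lies in $Im(x)$) and extracting $f$ as the commutator $f = \widetilde{f}\,\sigma^{N}\widetilde{f}^{-1}\sigma^{-N}$ of the repetition map. The points you flag as needing care --- realizing $\sigma$ as a block translation of the telescope, well-definedness of the infinite product, and passing from $\langle\langle \sigma \rangle\rangle$ to the normal closure of the full set of shifts --- are exactly the details the paper's proof leaves implicit, so no gap separates the two arguments.
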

\begin{proof}
	We will use the \emph{swindle} technique again.
	Since $f$ is finitely bounded on $x$, there are $2$ end separating curves labeled with $k_1 < k_2$.
	Choose the generalized shifts $\eta_z$s for all immediate predecessors $z$ of $x$.
	Without loss of generality, we assume that each $\eta_z$ shift $z$ towards to $x$, for all $z$.
	Let $\eta := \prod_{z} \eta_z$ and $k = k_2 - k_1$. 
	We construct $F$ as follows.
	\[
		F := f \circ (\eta^{k} f  \eta^{-k}) \circ (\eta^{2k} f  \eta^{-2k}) \circ \cdots .
	\]
	Note that the support of $(\eta^{ik}f \eta^{-ik})$ overlaps to $(\eta^{jk}f \eta^{-jk})$ if and only if $i = j$, which implies that they commute each other.
	\[
		f = F \circ (\eta^{k} \circ F \circ \eta^{-k})^{-1}
	\]
	Hence, $f$ is normally generated by $\eta$.
\end{proof}

\begin{proof}[proof of Proposition \ref{prop:upper_bound_of_handle_shifts}]
	Let $h$ be a handle shift of $S$ and let $\mathfrak{g}$ be a handle lying in the support of $h$.
	We define $i(h)$ to be the end accumulated by $h^{-n}, n\geq 1$ image of $\mathfrak{g}$.
	Similarly, we define $t(h)$ to be the end accumulated by $h^{n}, n \geq 1$ image of $\mathfrak{g}$.
	There are $4$ types of handle shifts in $S$. 
	\begin{itemize}
		\item[Case $(1)$] $i(h_1)$ is maximal in $E(S)$ and $t(h_1)$ is an immediate predecessor of some maximal end of $S$.
		
		\item[Case $(2)$] $i(h_2)$ is maximal and $t(h_2)$ neither maximal in $E(S)$ nor an immediate predecessor of any maximal ends in $S$.
		Such $h_2$ is a composition of $h_1$ and a handle shift $h'$ whose $i(h') = t(h_1)$ and $t(h') = t(h_2)$.
		Since such $h'$ is finitely bounded, $h_2$ is normally generated by generalized shifts and $h_1$.
		
		\item[Case $(3)$] Both $i(h_3)$ and $t(h_3)$ are neither maximal in $E(S)$ nor immediate predecessors of any maximal ends in $S$.
		Such $h_3$ is a composition of two different handle shifts of Case $(2)$.
		
		\item[Case $(4)$] Both $i(h_4), t(h_4)$ are maximal in $E(S)$.
		Such $h_4$ is a composition of two different handle shifts of Case $(1)$.
	\end{itemize}
	Hence, what we need is a collection of handle shifts of the Case $(1)$ for each maximal end. 
	Therefore we need at most $M$ handle shifts.
\end{proof}
Let $\mathcal{H}$ denote the handle shifts we choose in Proposition \ref{prop:upper_bound_of_handle_shifts}.

The last part is $\mathcal{V}_{K}$.
First we note that $\mathcal{V}_K$ is a product of $\Map(B_i, \partial B_i)$, where $\{B_i\}$ is a collection of connected components of $S - \text{int}(K)$.
For $B_i$, whose $\mathcal{M}(B_i)$ is a Cantor set, $\Map(B_i, \partial B_i)$ is already contained in the $\topnorm{\mathcal{S}}$, since $\mathcal{S}$ has an H-translation map.
Also, if $B_j$ contains a unique maximal end, $\Map(B_j, \partial B_j)$ is also contained in the $\topnorm{\mathcal{S}}$, since $\mathcal{S}$ is generated by generalized shifts.
Therefore we have the following.
\begin{thm}\label{thm:n_of_generator}
	Suppose $S$ is a tame, infinite type surface with CB generated $\Map(S)$.
	Let $M$ be the number of distinct types of maximal ends in $S$, \emph{i.e.,}
	\[
		M = \big\vert \mathcal{M}(S)/\sim \big\vert
	\]
	Also, let $C := \max_{A, B \in \mathcal{M}(S)} |E_{cp}(A, B)|$.
	Then $\Map(S) = \topnorm{\mathcal{S}\cup \mathcal{D} \cup \mathcal{H}}$.
	\emph{i.e.,} $\Map(S)$ is topologically normally generated by $M(M+C-1)$.
\end{thm}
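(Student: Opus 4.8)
The plan is to invoke the Mann--Rafi generating set of Proposition \ref{prop:prop5.4_mannrafi}, now applied to the refined decomposition by $K$ (which satisfies its hypotheses), and then to absorb every member of that generating set into $\topnorm{\mathcal{S}\cup\mathcal{D}\cup\mathcal{H}}$. Concretely, $\Map(S)$ is topologically generated by the identity neighborhood $\mathcal{V}_K$, a generating set of $\Map(K)$, the generalized shifts, the handle shifts, and the half twists. It therefore suffices to show that each of these five families lies in $\topnorm{\mathcal{S}\cup\mathcal{D}\cup\mathcal{H}}$, after which the cardinality count follows by adding $|\mathcal{S}|\le CM$, $|\mathcal{D}|=M(M-2)$, and $|\mathcal{H}|\le M$.

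First I would dispatch the shifts and the twists supported in $K$, since the earlier propositions already do the work. By Proposition \ref{prop:upper_bound_of_shifts} together with the conjugacy relations of Lemmas \ref{lem:same_conjugacy_xy} and \ref{lem:same_conjugacy_xx}, every generalized shift is conjugate---by a half twist, which I will have placed in $\topnorm{\mathcal{D}}$---to one of the at most $CM$ shifts in $\mathcal{S}$, and by Propositions \ref{prop:not_full} and \ref{prop:swindle} any full, permissible, or spontaneous shift lies in the normal closure of the corresponding full shift. Similarly, Proposition \ref{prop:upper_bound_of_handle_shifts} reduces every handle shift to the $M$ representatives of $\mathcal{H}$ modulo generalized shifts, using the finite-boundedness Lemma \ref{lem:shift_makes_finite_bound_map}. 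For the half twists and the rest of $\Map(K)$, Proposition \ref{prop:Map(K)} normally generates $\Map(K)$ modulo boundary twists by the $M(M-2)$ elements of $\mathcal{D}$, while Proposition \ref{prop:dehn_along_boundaries} places the boundary Dehn twists of $K$ in $\topnorm{\mathcal{S}}$.

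The heart of the argument---and the step I expect to be the main obstacle---is the identity neighborhood $\mathcal{V}_K=\prod_i \Map(B_i,\partial B_i)$, where the $B_i$ are the self-similar complementary pieces of $K$, and here I must split by the type of the maximal-end set of each piece. For a piece $B_i$ whose maximal-end set is a Cantor set I would use the H-translation map contained in $\mathcal{S}$ together with the swindle of Lemma \ref{lem:lemma3.7_malestein_tao} to obtain $\Map(B_i,\partial B_i)\subset\topnorm{\mathcal{S}}$; for a piece with a unique maximal end I would instead run the generalized-shift swindle, invoking the density of the finitely-bounded subgroup $G$ from \cite{lanier2023homeomorphism} and Lemmas \ref{lem:uncountable_Map(Su)} and \ref{lem:shift_makes_finite_bound_map}. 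The delicacy is twofold: one must check that $\mathcal{S}$, as defined, genuinely supplies an H-translation for each Cantor-type piece and a generalized shift feeding each unique-type piece, and one must pass to the \emph{topological} closure at the end, since recovering all of $\mathcal{V}_K$ requires limits of finitely-bounded maps rather than finite products. Once $\mathcal{V}_K$, $\Map(K)$, the shifts, and the half twists are all shown to lie in $\topnorm{\mathcal{S}\cup\mathcal{D}\cup\mathcal{H}}$, the count $CM+M(M-2)+M=M(M+C-1)$ finishes the proof.
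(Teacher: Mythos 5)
Your proposal follows essentially the same route as the paper: the Mann--Rafi decomposition of Proposition \ref{prop:prop5.4_mannrafi} applied to $K$, absorption of the generalized shifts into $\mathcal{S}$ via Propositions \ref{prop:upper_bound_of_shifts} and \ref{prop:swindle}, of the handle shifts into $\mathcal{H}$ via Proposition \ref{prop:upper_bound_of_handle_shifts}, of $\Map(K)$ into $\mathcal{D}$ via Propositions \ref{prop:Map(K)} and \ref{prop:dehn_along_boundaries}, and of $\mathcal{V}_K$ piecewise by the H-translation swindle for Cantor-type pieces and the generalized-shift swindle for unique-maximal-end pieces, ending with the count $CM+M(M-2)+M=M(M+C-1)$. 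This matches the paper's argument, including the need to work with the topological closure $\topnorm{\cdot}$.
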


Combining with Corollary \ref{cor:lower_bound_of_normal_gens}, we have the following upper and lower bound on $\mathbf{n}(S)$.
\[
	\max(1,M_{iso} - 1) \leq \mathbf{n}(S) \leq M(M+C-1)
\]
We remark that $\frac{\mathbf{n}(S)}{(M+C)^2}$ is bounded. 
\emph{i.e.,} the minimal number of topological normal generators grows quadratically by the sum of $2$ constants that only depend on the topology of the surface.

We also remark that both $C$ and $M$ could be arbitrary, by its definition.
Also, because we take the lower bound depending on the number of types of isolated maximal ends, $\frac{\mathbf{n}(S)}{(M+C)^2}$ might be arbitrarily small.
For example, if $S$ has infinitely many different types of maximal ends $x_i$ such that $E(x_i)$'s are all Cantor sets, and have $1$ isolated maximal end, then $k = 1$.
However, since $M$ is arbitrary, $\frac{\mathbf{n}(S)}{(M+C)^2}$ could be arbitrarily small.

Bounds on the minimal number of topological normal generators gives a bound of rank of its abelianization.
By Field-Patel-Rasmussen \cite{field2022sclbigmcg}, they proved that the commutator subgroup of the mapping class group of surface $S - F$ we questioned above is closed is finitely generated and discrete with the quotient topology.
\begin{thm}{(Theorem 1.1, 1.4 in \cite{field2022sclbigmcg})}\label{thm:fpr_abelianization}
	Let $S$ be an infinite type surface with tame end space such that $\Map(S)$ is CB generated.
	Suppose $S$ is an infinite type surface that every equivalence class of maximal ends of $S$ is infinite, except (possibly) for finitely many isolated planar ends.
	Then,
	\begin{enumerate}
		\item The commutator subgroup $[\Map(S), \Map(S)]$ is Polish and both open and closed subgroup of $\Map(S)$.
		\item The abelianization of $\Map(S)$ is finitely generated and is discrete when endowed with the quotient topology.
	\end{enumerate}
\end{thm}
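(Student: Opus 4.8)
The plan is to prove assertion (1) first and then to extract the topological part of (2) as a formal consequence of the commutator subgroup being open, leaving only abstract finite generation to be checked by hand. The single substantive input is the claim that $[\Map(S),\Map(S)]$ contains an identity neighborhood. I would produce such a neighborhood from the Mann--Rafi decomposition of Proposition \ref{prop:prop5.4_mannrafi}: there is a finite-type $L\subset S$ for which $S-\mathrm{int}(L)$ is a finite disjoint union of self-similar complementary regions $A_i$, and the identity neighborhood $\mathcal{V}_L=\{f : f|_L=\mathrm{id}\}$ is exactly $\prod_i \Map(A_i,\partial A_i)$.

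Under the standing hypothesis every equivalence class of maximal ends is infinite, hence a Cantor set, so by Definition \ref{defn:self-similar} each $A_i$ carrying a nontrivial class of maximal ends is perfectly self-similar; the only exceptions are the finitely many isolated planar ends, whose complementary pieces are once-punctured and carry no nontrivial compactly supported classes. For a perfectly self-similar $A_i$ the half-space machinery applies verbatim: given $f\in\Map(A_i,\partial A_i)$, choose a half-space $H$ containing $\supp(f)$ and an $H$-translation $\varphi$, form the repetition $\widetilde f=\prod_{k\geq 0}\varphi^{-k}f\varphi^{k}$ as in Lemma \ref{lem:lemma3.7_malestein_tao}, and note that $f=[\widetilde f,\varphi^{-1}]$. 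Every element of $\mathcal{V}_L$ is therefore a finite product of such commutators, so $\mathcal{V}_L\subseteq[\Map(S),\Map(S)]$ and the commutator subgroup is open.

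Openness delivers the rest of (1) and the topological half of (2) at once. The complement of an open subgroup is a union of translated open sets, hence open, so $[\Map(S),\Map(S)]$ is clopen; as a closed subgroup of the Polish group $\Map(S)$ it is itself Polish. Since the quotient of a topological group by an open normal subgroup is discrete, the abelianization is discrete in the quotient topology. For finite generation I would read the generators off Proposition \ref{prop:prop5.4_mannrafi}: the CB generating set is the union of $\mathcal{V}_L$, a finite generating set of $\Map(L)$, the finitely many generalized shifts, the finitely many handle shifts, and the finitely many half twists. As $\mathcal{V}_L$ dies in the abelianization by the previous paragraph and $L$ is of finite type, the abelianization is generated by the images of these finitely many remaining classes.

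The main obstacle is the commutator step. One must verify that the swindle yields genuine commutators inside $\Map(S)$ rather than merely elements of a topological normal closure, and that the finitely many isolated planar ends cause no obstruction to $\mathcal{V}_L\subseteq[\Map(S),\Map(S)]$ (their pieces being once-punctured, with the encircling twist trivial in $\Map(S)$). Once this is secured, the nontriviality of the abelianization resides entirely in the finitely many surviving generators together with the image of $\Map(L)$, so that pinning down its exact rank, rather than its finite generation, is the genuinely delicate remaining point; this is precisely the quantity Corollary \ref{cor:Z2rank_of_ab} bounds above using \cite{field2022sclbigmcg}.
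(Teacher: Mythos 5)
First, a point of comparison: the paper contains no proof of this statement at all. It is Theorem 1.1/1.4 of Field--Patel--Rasmussen \cite{field2022sclbigmcg}, quoted verbatim and used as a black box (in Corollary \ref{cor:Z2rank_of_ab}). So your attempt can only be judged against the tools the paper makes available, and on its own terms it has a genuine gap at its single load-bearing step. The soft topology is fine (open subgroup $\Rightarrow$ clopen $\Rightarrow$ Polish; quotient by an open normal subgroup is discrete; finite generation of the abelianization from the finitely many non-$\mathcal{V}_L$ generators in Proposition \ref{prop:prop5.4_mannrafi}), but the claim that for $f \in \Map(A_i,\partial A_i)$ one can ``choose a half-space $H$ containing $\supp(f)$'' is false in general. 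By the paper's definition of half-space, the complement of $H$ in the perfectly self-similar piece must contain a maximal end, so any class supported in $H$ is the identity on a neighborhood of that end. A general element of $\Map(A_i,\partial A_i)$ --- for instance one acting nontrivially on the Cantor set $\mathcal{M}(A_i)$, or an infinite product of twists exiting every end --- is the identity near no end of $A_i$, hence its support lies in no half-space; moreover the swindle of Lemma \ref{lem:lemma3.7_malestein_tao} needs infinitely many pairwise disjoint copies of $H$, which cannot exist if $H$ nearly fills $A_i$. Passing from half-space-supported classes to arbitrary elements of $\mathcal{V}_L$ is precisely the hard content here (in \cite{malestein2024selfsimilar} it requires a separate fragmentation argument), and your proof assumes it away. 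Note also that the worry you do flag --- whether the swindle gives genuine commutators --- is not an issue: $f = \widetilde{f}\,\varphi^{-1}\widetilde{f}^{-1}\varphi$ is an honest commutator.

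The repair is available inside the paper. Use the refined decomposition of Section \ref{sec:n_of_norm_gen} rather than the bare Proposition \ref{prop:prop5.4_mannrafi}: there the subsurface $K$ is chosen so that every Cantor class of maximal ends is spread over three homeomorphic complementary regions $B_j, B_l, B_m$ (the same device as $S_1,S_2,S_3$ in Section \ref{sec:uncountable}). Their union with the connecting piece of $K$ is perfectly self-similar, and each $B_j$ \emph{is} a half-space of that union, so Lemma \ref{lem:lemma3.7_malestein_tao} applies verbatim to every factor of $\mathcal{V}_K = \prod_i \Map(B_i,\partial B_i)$; since an element of $\mathcal{V}_K$ is a product of finitely many commuting such factors (the once-punctured-disk factors coming from isolated planar ends die in $\Map(S)$, as you observe), one gets $\mathcal{V}_K \subset [\Map(S),\Map(S)]$ by finite products of genuine commutators. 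With that substitution your openness argument, and hence the rest of your proof, goes through.
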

Applying their result of \cite{field2022sclbigmcg}, Theorem \ref{thm:n_of_generator} gives an upper bound of the rank.
\begin{cor}\label{cor:Z2rank_of_ab}
	Suppose $S$ is a finite connected sum of perfectly self-similar surfaces, possibly with finitely many punctures.
	Then the abelianization of $\Map(S)$ is finitely generated by at most $M(M+C-1)$.
\end{cor}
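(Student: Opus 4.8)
The plan is to combine the topological normal generating set produced in Theorem \ref{thm:n_of_generator} with the discreteness of the abelianization supplied by Theorem \ref{thm:fpr_abelianization}. Write $G = \Map(S)$ and let $q \colon G \to G^{\mathrm{ab}}$ denote the abelianization map, which is a continuous surjection. The goal is the inequality $\operatorname{rank}(G^{\mathrm{ab}}) \le M(M+C-1)$, and the whole argument rests on the observation that in an \emph{abelian} target the image of a normal closure collapses to the subgroup generated by the images of the generators, so that a dense normal closure upstairs becomes a dense (hence, once discreteness is known, a full) subgroup downstairs.

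First I would check that $S$ falls under the hypotheses of Theorem \ref{thm:fpr_abelianization}. Since $S$ is a finite connected sum of perfectly self-similar surfaces, each such summand contributes a set of maximal ends homeomorphic to a Cantor set by Definition \ref{defn:self-similar}, so every equivalence class $E(x)$ of a non-planar maximal end is infinite; the only exceptions are the finitely many punctures, which are isolated planar ends. One also records that $S$ is tame and that $\Map(S)$ is CB generated, so that Mann--Rafi's framework (and hence the decomposition of Proposition \ref{prop:prop5.4_mannrafi}) applies. With these verified, Theorem \ref{thm:fpr_abelianization} yields that $G^{\mathrm{ab}}$ is finitely generated and carries the discrete topology.

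Next I would invoke Theorem \ref{thm:n_of_generator}, which gives $G = \topnorm{\mathcal{S}\cup\mathcal{D}\cup\mathcal{H}}$ with $|\mathcal{S}\cup\mathcal{D}\cup\mathcal{H}| \le M(M+C-1)$. Applying $q$, and using that conjugation is trivial in $G^{\mathrm{ab}}$, the image $q\big(\langle\langle \mathcal{S}\cup\mathcal{D}\cup\mathcal{H}\rangle\rangle\big)$ is exactly the subgroup generated by $q(\mathcal{S}\cup\mathcal{D}\cup\mathcal{H})$. Since $q$ is continuous and surjective and the normal closure is dense in $G$, this subgroup is dense in $G^{\mathrm{ab}}$; but $G^{\mathrm{ab}}$ is discrete, so the only dense subgroup is all of $G^{\mathrm{ab}}$. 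Therefore $q(\mathcal{S}\cup\mathcal{D}\cup\mathcal{H})$ generates $G^{\mathrm{ab}}$, and the abelianization is generated by at most $M(M+C-1)$ elements, as claimed.

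The routine portions are the collapse of the normal closure and the density argument; the only place demanding care is the verification step, namely confirming that a finite connected sum of perfectly self-similar surfaces with finitely many punctures really does satisfy the standing assumptions of Theorem \ref{thm:fpr_abelianization} (CB generation, tameness, and the absence of any finite non-planar maximal-end class). Once that bookkeeping is in place the bound follows immediately, and no effective lower bound is available here because the punctures contribute to $G^{\mathrm{ab}}$ without contributing to $M_{iso}$.
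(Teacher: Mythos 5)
Your proposal is correct and takes essentially the same route as the paper: the paper's proof is the one-line observation that, since the abelianization is discrete by Theorem \ref{thm:fpr_abelianization}, its rank cannot exceed the number of topological normal generators of $\Map(S)$, which Theorem \ref{thm:n_of_generator} bounds by $M(M+C-1)$. Your write-up simply makes explicit the density-plus-discreteness argument and the verification of the Field--Patel--Rasmussen hypotheses that the paper leaves implicit.
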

The proof is simple, since the number of topological normal generators can not exceed the number of generators of its abelianization.
However, we cannot get an effective lower bound by using the inequality in Theorem \ref{thm:thmalpha_lowerbound}, since $M_{iso}$ counts the isolated maximal ends which is not the same type of a puncture.

\section{Remarks on locally CB}\label{sec:locCB_not_CBgend}
In \cite{lanier2023vRP}, the authors proved that for an infinite type surface $S$, $\Map(S)$ has the virtual Rokhlin property (\emph{i.e.,} $\Map(S)$ contains a finite index subgroup which has the Rokhlin property) if and only if $S$ is spacious.
More precisely, they found an open subgroup $\Gamma(S)$ which is an open, finite-index subgroup of $S$ with the Rokhlin property.
We first recall the some definitions in \cite{lanier2023vRP}.
\begin{defn}\label{defn:spacious}~
	
	\begin{enumerate}
		\item A $2$-manifold $S$ is \emph{weakly self-similar} if every proper compact subset of $S$ is displaceable.
		\emph{i.e.,} such that for any compact subsurface $\Sigma \subset S$, there is a homeomorphism $\varphi$ of $S$ so that $\Sigma \cap \varphi(\Sigma) = \phi$.
		
		\item Suppose $S$ is a weakly self-similar and it is homeomorphic to $N$ or $N\# N$ for some uniquely self-similar surface $N$.
		Then $\mu \in E(S)$ is \emph{maximally stable} if there is a nested subsequence $\Sigma_n,n \in \mathbb{Z}_{\geq 0}$ such that $\Sigma_n$ is homeomorphic to $N$ with the open disk removed, $\Sigma_n$ is a neighborhood of $\mu$, and $\cap_{n \in \mathbb{Z}_{\geq 0}}$ is empty.
		Let $\Gamma(S)$ be a subgroup of $\Homeo_+(S)$ consisting of homeomorphisms whose support is outside of 
		neighborhoods of stable maximal ends.
		Note that $\Gamma(S)$ is an open normal subgroup of $\Homeo_+(S)$.
		
		\item A $2$-manifold $S$ is \emph{spacious} if it is $S$ is weakly self-similar and for any simple closed curve $c$ in $S$, $\Gamma(S) \backslash \Homeo_+(S) \cdot c$ is finite.
	\end{enumerate}
\end{defn} 

Together with the work of Mann-Rafi \cite{mann2023large}, they also give an uncountable family of spacious surfaces, which is the surface with countable ends that $E(S) = \omega^\alpha \cdot 2 + 1, ~E^G(S) = \phi$ for any countable limit ordinal $\alpha$.
Since $\alpha$ is a limit ordinal, all surfaces in the example are locally CB but not CB generated.
The simplest example is when $\alpha = \omega$, which is called a \emph{The Great Wave off Kanagawa surface}.
See Figure \ref{fig:wave_off_kanagawa}, a cartoon of the surface.

\begin{figure}[h]
	\centering
	\includegraphics[width=.5\textwidth]{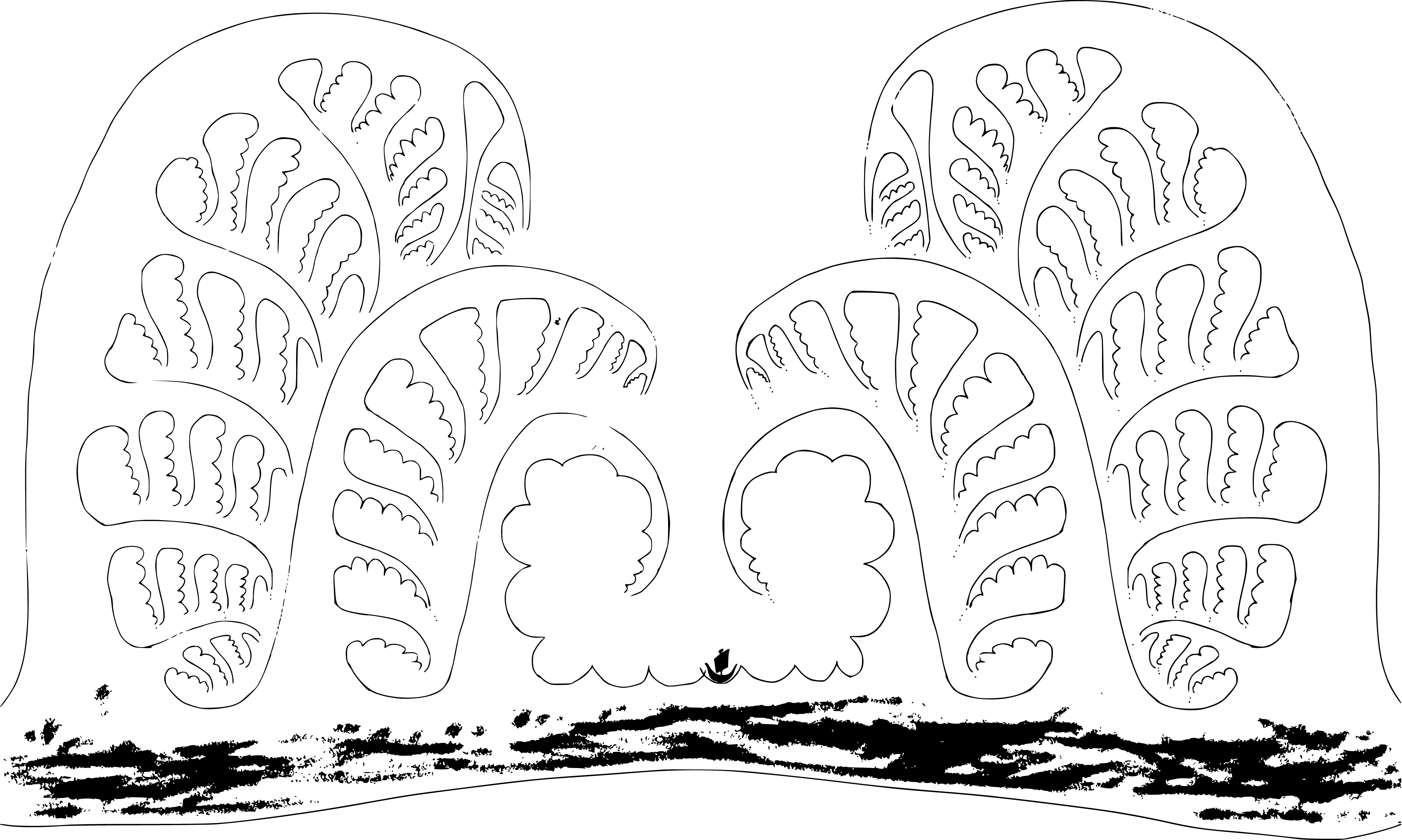}
	\caption{Cartoon of the Great Wave off Kanagawa surface $S$. $E(S) = \omega^\omega\cdot 2 + 1$ and $E^G(S) = \emptyset$.}
	\label{fig:wave_off_kanagawa}
\end{figure}

Also, the closure of $\Gamma(S)$ is $\FMap(S)$.
Hence it is normally generated by the element chosen from the dense conjugacy class of $\Gamma(S)$ and the maximal end flip.
\begin{prop}\label{prop:uncountable_example_of_loc_CB}
	There are uncountably many surfaces $S$ which are locally CB but not CB generated, and $\FMap(S)$ is topologically normally generated.
\end{prop}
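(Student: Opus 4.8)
The plan is to use the family exhibited just above. For each countable limit ordinal $\alpha$, let $S_\alpha$ be the tame surface with $E(S_\alpha) = \omega^\alpha \cdot 2 + 1$ and $E^G(S_\alpha) = \emptyset$. Distinct countable limit ordinals yield non-homeomorphic end spaces, since they have different Cantor--Bendixson ranks, and as these surfaces are planar they are determined by their end spaces; there are uncountably many countable limit ordinals, so $\{S_\alpha\}$ is an uncountable family of pairwise non-homeomorphic surfaces. By the Mann--Rafi criterion each $S_\alpha$ is locally CB, while the hypothesis that $\alpha$ is a limit ordinal forces $E(S_\alpha)$ not to be of finite rank, so $\Map(S_\alpha)$ is not CB generated.

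Next I would invoke \cite{lanier2023vRP}: since each $S_\alpha$ is spacious, the associated open finite-index subgroup $\Gamma(S_\alpha)$ has the Rokhlin property, that is, it carries a conjugacy class which is dense in $\Gamma(S_\alpha)$. I would also use the identification $\overline{\Gamma(S_\alpha)} = \FMap(S_\alpha)$, so that $\Gamma(S_\alpha)$ is a dense subgroup of $\FMap(S_\alpha)$.

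The only genuine step is to transport the density upward. Fix $g$ in the dense conjugacy class of $\Gamma(S_\alpha)$. By the Rokhlin property the set $\{h g h^{-1} : h \in \Gamma(S_\alpha)\}$ is dense in $\Gamma(S_\alpha)$, and since $\Gamma(S_\alpha)$ is itself dense in $\FMap(S_\alpha)$, this same set is dense in $\FMap(S_\alpha)$. As $\Gamma(S_\alpha) \subset \FMap(S_\alpha)$, each such conjugate lies in the normal closure of $g$ taken inside $\FMap(S_\alpha)$; hence that normal closure is dense, and $\FMap(S_\alpha)$ is topologically normally generated by $g$. Letting $\alpha$ range over all countable limit ordinals produces uncountably many examples, as required.

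The hard part is not this formal density transfer but the two inputs it rests on. First, one must check that the entire family, and not merely the Great Wave off Kanagawa surface at $\alpha = \omega$, is simultaneously spacious and locally-CB-but-not-CB-generated; this is where the $N \# N$ description of $S_\alpha$ and the limit-ordinal hypothesis on $\alpha$ are used. Second, the identification $\overline{\Gamma(S_\alpha)} = \FMap(S_\alpha)$ must be established by matching the condition defining $\Gamma(S_\alpha)$ (support away from neighborhoods of the stable maximal ends) against the condition defining $\FMap(S_\alpha)$ (fixing the isolated maximal ends pointwise), and confirming that the two maximal ends of $S_\alpha$ are exactly the maximally stable ends. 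Once these inputs are in hand the proposition follows.
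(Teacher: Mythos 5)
Your proposal is correct and takes essentially the same route as the paper: the same family $E(S_\alpha)=\omega^\alpha\cdot 2+1$, $E^G(S_\alpha)=\emptyset$ over countable limit ordinals, the same appeal to \cite{lanier2023vRP} for spaciousness, the Rokhlin property of $\Gamma(S_\alpha)$, and the identification $\overline{\Gamma(S_\alpha)}=\FMap(S_\alpha)$, followed by the same density-transfer argument. If anything, your single-generator version is slightly cleaner than the paper's phrasing, which also invokes the maximal end flip --- an element that is unnecessary (and indeed unavailable) inside $\FMap(S)$, since elements of $\FMap(S)$ fix the isolated maximal ends pointwise.
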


We end the paper with questions about relaxing the tameness or CB generation.
\begin{que}
	Find a surface $S$ that $\Map(S)$ is locally CB but not CB generated, and topologically normally generated.
\end{que}
\begin{que}
	Find a non-tame surface $S$ that $\Map(S)$ is CB generated, and topologically normally generated.
\end{que}

In \cite{rolland2024large}, the authors proved that there exists an infinite type surface that is not tame, but admits a CB generated mapping class group (but not globally).
If the surface is not tame, the shift map may not be constructed in general.
As in the example, if we allow the surface $S$ to be of limit type or non-tame, (so that $\Map(S)$ is no longer CB generated) then we cannot construct the generalized shift map, which makes our method in the paper is not applicable.

\bibliographystyle{alpha} 
\bibliography{refs}
	
\end{document}